\newcommand{\ov}{\overline} 
\renewcommand{\a }{\alpha }
\renewcommand{\b }{\beta }   
\renewcommand{\d}{\delta }    
\renewcommand{\l}{\lambda }
\newcommand{\R}{\mathbb{R}}
   \def\O{\Omega}
\def\e{\epsilon}
\def\S{\Sigma} 
\def\n{\nabla}
\def\p{\partial}
\def\a{\alpha}
\def\b{\beta}
\def\<{\langle}
\def\>{\rangle}
\def\n{\nabla}
\def\t{\mathbf{t}}
\def\j{\mathbf{j}}
\def\gg{\mathfrak{g}}
\def\O{\Omega}
\def\p{\partial}
\def\e{\epsilon}
\def\De{\Delta}
\def\ve{\varepsilon}
\def\a{\alpha}
\def\b{\beta}
\def\g{\gamma}
\def\d{\delta}
\def\H{\mathcal{H}}
\def\l{\lambda}
\def\s{\sigma}
\def\wt{\widetilde}
\def\ve{\varepsilon}
\def\wh{\widehat}
 \def\v{\varphi}
  \def\PP{\mathbb{P}}
  \def\NN{\mathbb{N}}
  \def\CC{\mathbb{C}}
  \def\RR{\mathbb{R}}
\renewcommand*\env@matrix[1][*\c@MaxMatrixCols c]{%
  \hskip -\arraycolsep
  \let\@ifnextchar\new@ifnextchar
  \array{#1}} 
\newcommand\blfootnote[1]{
  \begingroup
  \renewcommand\thefootnote{}\footnote{#1}%
  \addtocounter{footnote}{-1}%
  \endgroup
}
\newtheorem{remark}{Remark}[section]    
\newtheorem{lemma}{Lemma}[section]
\newtheorem{proposition}{Proposition}[section] 
\newtheorem{thm}{Theorem}
\newtheorem{definition}{Definition}[section]
\newtheorem{thmx}{Theorem}
\numberwithin{equation}{section}
\title
{
On
constant mean curvature 1-immersions of surfaces  
into hyperbolic 3-manifolds.
}
\begin{document} 

\author{Gabriella Tarantello and Stefano Trapani}

\maketitle

\begin{abstract} 
The work of Bryant \cite{Bryant} revealed striking analogies between constant mean curvature (CMC) $1$-immersions of surfaces into the hyperbolic space $\mathbb H^3$ (Bryant surfaces) and  minimal immersions into the euclidean space $\mathbb E^3.$ Ever since, the role of (CMC) $1$-immersions in hyperbolic geometry has been widely explored, see e.g. \cite{Rosenberg} and references therein.

In account of
 \cite{Uhlenbeck}, \cite{Goncalves_Uhlenbeck} and after \cite{Tar_2}, for a given surface $S$ (closed, orientable and of genus $\gg \geq2$)  here we pursue the existence and uniqueness of  (CMC) $1$-immersions of $S$ into hyperbolic 3-manifolds. 

It has been shown in \cite{Huang_Lucia_Tarantello_2} that,
for  $\vert c \vert <1$, the moduli space
of  (CMC) $c$-immersions of $S$ into hyperbolic $3$-manifolds can be parametrised  by elements of the tangent bundle of the Teichm\"uller space of the surface $S.$ 
In turn in \cite {Tar_2} it was pointed out  that (CMC) $1$-immersions  enter as "critical" objects, in the sense that they can be attained only as limits of (CMC) $c$-immersions, 
as $|c| \to 1^-.$ However, the passage to the limit can be prevented by possible blow-up phenomena, and at the limit ($|c| \to 1^-$) we could end up  at best with an immersed surface having conical singularities supported at finitely many points (the blow-up points).

If the genus $\gg=2$ then  blow up can occur at a single point, and in  \cite{Tar_2} it was shown how it could be prevented and the passage to the limit ensured in terms of the  image $Z$ of Kodaira map given in \eqref{kodaira map}. 
In this note we show that actually blow-up can occur only at one of the six Weierstrass points of the surface. Thus, in Theorem \ref{thm1} and Theorem \ref{thmg2} we establish existence and uniqueness results under a sufficient  "compactness"  condition, which in fact turns out to be also necessary, as shown in \cite{Tar_Tra3} .

In addition we analyze the case of higher genus, where multiple (up to $\gg - 1$) blow-up points can occur. In this case, for any  $1 \leq \nu \leq \gg - 1,$ we identify in the $\nu$-secant variety of $Z$ the appropriate replacement of  $Z$ (relative to $\nu=1$),  see Proposition \ref{***}.
Moreover, in  Theorem \ref{thm3} we improve in a substantial way the asymptotic analysis of \cite{Tar_2}, which concerns only the case of "blow-up" with minimal mass. As a consequence, we cover the case of genus $\gg=3$ (see Theorem \ref{thm1.1}), and  provide relevant contributions for arbitrary genus.

\end{abstract}

%\tableofcontents

\section{Introduction}\label{introduction} 

\blfootnote
{
MSC: 35J50, 35J61, 53C42, 32G15, 30F60.
Keywords: 
Blow-up Analysis, Minimiser of a Donaldson functional,  CMC 1-immersions, Grassmannian, hyperelliptic curves. 
}

Let $S$ be an oriented closed surface with genus $\mathfrak{g}\geq 2$ and denote 
by $\mathcal{T}_{\mathfrak{g}}(S)$ the Teichm\"uller space of conformal structures on $S$, modulo biholomorphisms in the homotopy class of the identity. Hence, any element $X\in \mathcal{T}_{\mathfrak{g}}(S)$ identifies a Riemann surface.  

The goal of this note is to establish existence and uniqueness of constant mean curvature (CMC) 1-immersions of $S$ into hyperbolic $3$-manifolds (i.e. with sectional curvature $-1$) in terms of elements of the tangent bundle  of the Teichm\"uller space $\mathcal{T}_{\mathfrak{g}}(S)$.

More in general, we shall consider (CMC) $c$-immersion, namely immersion with (prescribed) value $c$ of the mean curvature.  \\
As observed by Bryant \cite{Bryant}, the value $c=1$ plays a special role in this context, as indeed (CMC) 1-immersions of surfaces into the hyperbolic space $\mathbb H^3$ share striking analogies with the (cousins) minimal immersions into the Euclidean space $\mathbb E^3$,  see \cite{Rosenberg} and also \cite{Rossman_Umehara_Yamada}, \cite{Umehara_Yamada}. As a matter of fact, the value $c=1$ enters also as a "critical" parameter in our analysis of (CMC) $c$-immersions, as discussed below (see also \cite{Tar_2}). 

To be more precise, we recall (from \cite{Uhlenbeck} and \cite{Goncalves_Uhlenbeck}) that, 
 if the Riemann surface $X\in \mathcal{T}_{\mathfrak{g}}(S)$ is immersed with constant mean curvature  $c$ into a hyperbolic 3-dimensional manifold $(N, \hat{g}),$ then for  
$\hat{g}=(\hat{g}_{ij})$ and $1\leq i, j\leq 3$, the Riemann curvature tensor $R_{ijkl}$ of $(N, \hat{g})$  satisfies:
\begin{equation}\label{0.1}
R_{ijkl}=-(\hat{g}_{ik}\hat{g}_{jl}-\hat{g}_{il}\hat{g}_{jk}),
\end{equation}
and actually the system \eqref{0.1} reduces to six independent equations for the six independent components of the Riemann tensor. 

By introducing Fermi coordinates: $(z, r)\in X\times(a, -a)$ in a tubular neighbourhood of the surface (with small  $a>0$) 
then the metric tensor  satisfies: $\hat{g}_{i3}(z, r)=\delta_{i3},$ for $i=1,2,3$,  while the remaining (three) components: $\hat{g}_{ij}(z, r)$, $1\leq i \leq j \leq 2$    in view of (\ref{0.1}) satisfy:
\begin{equation}\label{0.2}
R_{i3j3}=-\hat{g}_{ij}.
\end{equation}
Interestingly, in the $(z, r)-$coordinates, (\ref{0.2}) defines a $2^{nd}$ order system of ODE's for $\hat{g}_{ij}$ with respect to the variable $r$ (and $z$ fixed), see \cite{Huang_Lucia_Tarantello_1}.
 So for $|r|< a$ (and every $z \in X$) the metric $(\hat{g}_{ij})$ is uniquely identified  by the corresponding Cauchy data at $r=0.$ Such initial data are expressed naturally in terms of the pullback metric $g$ on $X$ and the second fundamental form $II$. Locally, in the holomorphic $z-$coordinates on $X$,
 we have:
 \begin{equation}\label{0.3}
 g=\lambda dzd\bar{z}\ \ \  \text{and}\ \ \  II=h(dz)^2+c\lambda dzd\bar{z}+\bar{h}(d\bar{z})^2 
 \end{equation}
with smooth functions : $\lambda=\lambda(z, \bar{z})>0$ and $h=h(z, \bar{z})\in \mathbb{C}$, constrained by the remaining (three) independent equations
in (\ref{0.1}) as given by:
\begin{align}
&R_{ijl3}=0 \label{0.4}\\
&R_{1212}=-(\hat{g}_{11}\hat{g}_{22}-\hat{g}_{12}^2) \label{0.5},
\end{align}
see \cite{Uhlenbeck,Goncalves_Uhlenbeck} for details.
Actually, by Bianchi identity, it suffices that  (\ref{0.4})  and (\ref{0.5}) are satisfied at $r=0$, in order to hold for any $r\neq0$. Furthermore, for $r=0$ the (two independent) equations in (\ref{0.4})
combine to define the (complex) \underline{Codazzi equation} (with respect to the complex structure of $X$) so that, locally it is expressed by the Cauchy Riemann equation for 
$h$ in \eqref{0.3},  see (\cite{Lawson}) for details.

In other words, we find that 
$\a$ :=(2,0)-part of the second fundamental form II, must correspond to a holomorphic quadratic differential (Hopf differential). 
Thus, by letting $C_2(X)$ the space of holomorphic quadratic differentials on $X$,  we have: $\a\in C_2(X).$ 
 
 We recall that $C_2(X)$ is a finite dimensional complex linear space formed by holomorphic sections of $X$ valued in $K_X\otimes K_X,$ where $K_X$ is the canonical bundle of $X.$ In other wards, the Codazzi equation is 
 globally formulated in terms of the $\bar\p$ operator  relative to the complex structure of $K_X\otimes K_X$ induced by $X$ as follows: 
\begin{equation}\label{0.6}
\bar{\partial}\alpha=0 
\end{equation}

While for $r=0$, the equation (\ref{0.5})  gives the \underline{Gauss equation}, expressing compatibility between the (intrinsic) Gauss curvature $K_g$ of  $(X, g)$ and its extrinsic expression.
So (by recalling (\ref{0.3})) we have: 
\begin{equation}\label{0.7}
K_g=-1+c^2-4\|\alpha\|_g^2
\end{equation}
with $\|\cdot\|_g$ the norm taken with respect to the hermitian product on $K_X\otimes K_X$ induced by  the metric $g$. 

For convenience, in the following we denote by $E=T^{1,0}_X$ the holomorphic tangent bundle of $X,$ with dual $E^*=K_X.$ Both holomorphic line bundles $E$ and $E^*$ (and their tensor products)  will be equipped with the complex structure induced by $X.$ 

Furthermore, we let $g_X$ be the unique hyperbolic metric on $X$ (as given by the uniformization theorem)  and denote by $\|\cdot\|$ the norm corresponding to the hermitian product on $E$ and $E^*$  (and their tensor products) induced by $g_X$. 

By conformal equivalence, we have: 

 $g=e^ug_X \quad \|\alpha\|_g=\|\alpha\|e^{-u}\ \text{ and }\ K_g=e^{-u}(-\frac12\Delta u-1)$ \\ 
 with $u$ a smooth function on $X$ and $\Delta$ the Laplace Beltrami operator in $(X, g_X)$. 
 
  As a consequence, (\ref{0.7}) can be turned into the following elliptic equation (of Liouville type)
 \begin{equation}\label{0.8}
 -\Delta u=2-2(1-c^2)e^u-8\|\alpha\|^2e^{-u}.
\end{equation}
Interestingly,  every solution $(u, \alpha)$ of the Gauss-Codazzi equations (\ref{0.8})-(\ref{0.6}) identifies a "germ" of hyperbolic 3-manifolds of $X.$ Namely, as indicated by Taubes \cite{Taubes}, it identifies a hyperbolic 3-manifold $(N, \bar{g})$ ($N\simeq X\times\mathbb{R}$ not necessarily complete) where $X$ is immersed as a surface with constant mean curvature $c,$ with pullback metric $g=e^{u}g_X$ and second fundamental form identified by $g$ and $\alpha$ as its (2,0)-part. Furthermore, $(N, \bar{g})$ is \underline{uniquely} identified by the solution pair $(u, \alpha),$ up to local diffeomophisms of tubolar neighbourhoods of $X,$ see \cite{Taubes}.  \\
At this point, it may be tempting to describe the solution set of the Gauss-Codazzi equations \eqref{0.6}-\eqref{0.8} in terms of the pair 
$(X,\alpha) \in \mathcal{T}_{\mathfrak{g}}(S) \times C_{2}(X),$ which provides a local trivialization for the cotangent bundle of $\mathcal{T}_{\mathfrak{g}}(S).$ In this way, we would attain a parametrization of  (CMC) $c$-immersions into hyperbolic 3-manifolds
 by elements of the cotangent bundle $T^{*}( \mathcal{T}_{\mathfrak{g}}(S)).$ 

However, 
as discussed in \cite{Huang_Lucia} and \cite{Huang_Lucia_Tarantello_1},  for a given $\a\in C_2(X)$ a solution of \eqref{0.8} may not exist, or when it exists, 
it may not be unique (see also \cite{Hung_Loftin_Lucia}).
So, in general, the pair $(X,\a)$ is not suitable to parameterized (CMC) $c$-immersions. 
Instead, it has proved more successful the "dual" approach 
by Goncalves and Uhlenbeck in \cite{Goncalves_Uhlenbeck}, where the authors propose
to parametrize (CMC) $c$-immersions of $S$
into hyperbolic $3$-manifolds, 
in terms of elements of the tangent bundle  of the Teichm\"uller space $\mathcal{T}_{\mathfrak{g}}(S)$.

By recalling the isomorphism:
$
C_{2}(X) \simeq (\mathcal{H}^{0,1}(X,E))^{*}
$ 
with $E=T^{1,0}_{X}$ and
$\mathcal{H}^{0,1}(X,E)$ the Dolbeault (0,1)-cohomology group, see \eqref{Dol}
and  \cite{Griffiths_Harris},  
we have a parametrization of the tangent bundle of
$\mathcal{T}_{\mathfrak{g}}(S)$ by the pairs:
$
(X,[\beta])\in \mathcal{T}_{\mathfrak{g}}(S) \times \mathcal{H}^{0,1}(X,E) 
$. 

Accordingly, (as anticipated by \cite{Goncalves_Uhlenbeck}) it has been proved
in \cite{Huang_Lucia_Tarantello_2} that the following holds:

\begin{thmx}[\cite{Goncalves_Uhlenbeck},\cite{Huang_Lucia_Tarantello_2}]
\label{thm_A}
For given $c\in (-1,1)$ there is a one-to-one correspondence between the space of constant mean curvature $c$-immersions of $S$ into a (germ of) hyperbolic $3$-manifolds  and the 
tangent bundle of 
$\mathcal{T}_{\mathfrak{g}}(S)$, parametrized by the pairs: 
$
(X,[\beta])\in \mathcal{T}_{\mathfrak{g}}(S) \times \mathcal{H}^{0,1}(X,E),
\; 
E=T_{X}^{1,0}.  
$ 
\end{thmx}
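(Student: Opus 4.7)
The plan is to solve the Gauss–Codazzi system (\ref{0.6})–(\ref{0.8}) uniquely for every pair $(X,[\beta])\in\mathcal{T}_{\mathfrak{g}}(S)\times\mathcal{H}^{0,1}(X,E)$, and then to invoke the Taubes ``germ'' construction already recalled in the excerpt to promote each solution to a unique (CMC) $c$-immersion. Since (\ref{0.6})–(\ref{0.8}) are naturally parametrized by the cotangent-bundle datum $(X,\alpha)$, I first convert $[\beta]$ into $\alpha\in C_{2}(X)$ through the hyperbolic metric $g_{X}$: combining Serre duality $\mathcal{H}^{0,1}(X,E)\simeq C_{2}(X)^{*}$ with the $L^{2}$ Hermitian pairing induced by $g_{X}$ gives a canonical conjugate-linear isomorphism $C_{2}(X)\simeq \mathcal{H}^{0,1}(X,E)$ that sends a holomorphic quadratic differential $\alpha$ to the class of the unique $g_{X}$-harmonic $(0,1)$-form $\beta_{\alpha}$ obtained by raising one index of $\bar\alpha$ with $g_{X}^{-1}$. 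The theorem thus reduces to a one-to-one correspondence between pairs $(X,\alpha)\in T^{*}\mathcal{T}_{\mathfrak{g}}(S)$ and (CMC) $c$-immersions.

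Given $(X,\alpha)$, the Codazzi equation (\ref{0.6}) holds automatically and the content of the theorem becomes the unique solvability of the Liouville-type equation (\ref{0.8}) for every $\alpha\in C_{2}(X)$ when $|c|<1$. I would read (\ref{0.8}) as the Euler–Lagrange equation of a Donaldson-type functional
\[
\mathcal{D}_{\alpha}(u)\;=\;\int_{X}\Bigl[\tfrac{1}{2}|\nabla u|^{2}+2(1-c^{2})\,e^{u}-8\,\|\alpha\|^{2}e^{-u}-2u\Bigr]\,dV_{g_X}.
\]
The decisive role of $|c|<1$ is that the coefficient $2(1-c^{2})$ is strictly positive, making the $e^{u}$-term coercive at $u\to+\infty$; combined with the Moser–Trudinger inequality on $(X,g_{X})$ this controls the linear contribution $-2u$ and suffices to produce a critical point, either by direct minimization on a suitable admissible class or via a min–max scheme when the concave term $-8\|\alpha\|^{2}e^{-u}$ prevents $\mathcal{D}_{\alpha}$ from being bounded below. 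Uniqueness then follows from a maximum-principle argument, reflecting the strict negativity of the Gauss curvature $K_{g}=-1+c^{2}-4\|\alpha\|_{g}^{2}<0$ of the prospective pullback metric $g=e^{u}g_{X}$.

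Injectivity of the resulting correspondence is immediate: a (CMC) $c$-immersion determines the conformal class $X$ through the pullback metric and recovers $\alpha$ through the $(2,0)$-part of the second fundamental form, hence recovers $[\beta]$ via the isomorphism above; surjectivity comes from the PDE construction combined with Taubes. The hard part, and the principal obstacle, is the analytic control of $\mathcal{D}_{\alpha}$ near the zero set of $\alpha$: away from $\{\alpha=0\}$ the concave term $-8\|\alpha\|^{2}e^{-u}$ diverges to $-\infty$ as $u\to-\infty$, so naive minimization fails and one must extract compactness by a careful decomposition $u=\bar u+v$ with $\int_{X}v\,dV_{g_X}=0$ together with a fine use of the Moser–Trudinger inequality to trap the constant mode $\bar u$ in a bounded range. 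The hypothesis $|c|<1$ is essential at every step — once $c^{2}\geq 1$ the positivity of $2(1-c^{2})$ is lost, coercivity collapses, and the non-existence/blow-up phenomena of \cite{Tar_2} become unavoidable, which is precisely the regime the present paper sets out to clarify.
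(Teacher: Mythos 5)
Your reduction to the cotangent-bundle datum $(X,\alpha)$ is precisely the step this theorem is designed to avoid, and it does not work. Once you fix $\alpha=*_E\beta_0\in C_2(X)$ and try to solve the single Liouville equation \eqref{0.8} for $u$, you are parametrizing immersions by $(X,\alpha)$; but, as recalled in the Introduction (see \cite{Huang_Lucia}, \cite{Huang_Lucia_Tarantello_1}, \cite{Hung_Loftin_Lucia}), for a given $\alpha$ equation \eqref{0.8} may admit no solution, or more than one, even when $|c|<1$. Your proposed remedies do not repair this: the functional $\mathcal{D}_{\alpha}$ is genuinely unbounded from below because of the term $-8\|\alpha\|^{2}e^{-u}$ (positivity of $2(1-c^{2})$ only controls $u\to+\infty$, not $u\to-\infty$), and the uniqueness you attribute to ``a maximum-principle argument'' is false in general --- non-uniqueness for fixed $\alpha$ is exactly the known obstruction. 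So both surjectivity and injectivity of the claimed bijection break in your scheme; note also that your injectivity step would recover the class of $*_E^{-1}\alpha$, whereas the correspondence in the theorem is defined through the class of $*_E^{-1}(e^{-u}\alpha)$, which is a different object.

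The actual correspondence does not impose $\alpha=*_E\beta_0$ but only the weaker constraint \eqref{*}, i.e. $\alpha=e^{u}*_E(\beta_0+\bar\partial\eta)$ for an auxiliary unknown section $\eta\in A^{0}(E)$. This turns the Gauss--Codazzi equations into the coupled system \eqref{system_of_equations_introbis} in the pair $(u,\eta)$, which is the Euler--Lagrange system of the Donaldson functional $F_t$ in \eqref{F_t} with $t=1-c^{2}>0$. There the quadratic-differential term enters as $+4e^{u}\Vert\beta_0+\bar\partial\eta\Vert^{2}$, with the favorable sign and weight, so that for $t>0$ the functional is bounded below, coercive, and admits a unique critical point given by its global minimum; this is the content of the proof in \cite{Huang_Lucia_Tarantello_2} (anticipated in \cite{Goncalves_Uhlenbeck}), and it is the reason the tangent-bundle parametrization by $(X,[\beta])$ succeeds where the cotangent one fails. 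The final bookkeeping via Taubes' germs that you describe is fine, but the analytic core of your argument rests on the wrong parametrization and the wrong functional.
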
	
As a matter of fact, for fixed $c\in (-1,1),$   
the datum $(X,[ \beta ])$  identifies the \underline{unique} solution $(u=u_c 
 \, ,\a=\a_c)$ of the Gauss-Codazzi equations \eqref{0.6}-\eqref{0.8} subject to the constaint:
\begin{eqnarray}\label{*}
   *_E^{-1}(e^{-u_c}\a_c) \in[\b]
\end{eqnarray} where $*_E$ is the Hodge star operator relative to the metric $g_X,$ with inverse $(*_E)^{-1},$ acting between (dual) forms valued on $E$ and $E^*$ respectively, for details see \eqref{2.8*hodge_operator_intro} below.

To interpret \eqref{*} let us recall Dolbeault decomposition valid for any Beltrami differential $\beta$ (i.e. $(0,1)$-form 
 valued in $E$) as follows: 
$$
\beta = \beta_{0} + \bar{\partial}\eta
$$
with  $\beta_{0}$ \underline{harmonic} (with respect to $g_{X}$) and  
$\eta$ a smooth section of X valued on E.

Hence, the (0,1)-cohomology class  $[\beta] \in \mathcal{H}^{0,1}(X,E)$ is identified
by the unique harmonic differential $\beta_{0} \in [\beta].$ 

In this way, we attain also the (dual) isomorphism: $\mathcal{H}^{0,1}(X,E)) \simeq (C_2(X))^*,$ as discussed in details in Section 2.

Furthermore, for a solution pair: $(u\, , \a)$ of the Gauss Codazzi equations (\ref{0.8})-(\ref{0.6}), we can easily formulate the constraint \eqref{*} by setting:
\begin{equation}\label{constraint}
g=e^u g_X \quad \text{  and  } \a=e^{u}*_E(\beta_{0} + \bar{\partial}\eta)
\end{equation}
with $(u\, , \eta)$ satisfying:
\begin{equation}\label{system_of_equations_introbis}
\left\{
\begin{matrix*}[l]
\Delta u +2 -2te^{u} -8e^{u}\Vert \beta_{0}+\overline{\partial}\eta \Vert^{2} =0  &  \;\text{ in }\;  &  X  \\
\overline{\partial}(e^{u}*_{E}(\beta_{0}+\overline{\partial}\eta))=0 &  \;\text{}\;  &   \\ 
\end{matrix*}
\right.
\end{equation} and $t=1-c^2.$

Interestingly, the solutions of the "constraint" Gauss-Codazzi equations (\ref{system_of_equations_introbis}) will correspond to the critical points of the so called Donaldson functional $F_{t}$ ($t=1-c^{2}$) introduced in \cite{Goncalves_Uhlenbeck} and defined in \eqref{F_t} below.
\\
Indeed, Theorem \ref{thm_A} 
is established in \cite{Huang_Lucia_Tarantello_2} by showing precisely that, for $t>0$ the functional $F_t$ admits a \underline{unique} critical point given by its global minimum.

At this point it is natural to ask whether, 
for a given pair $(X,[\beta])$ analogous (CMC) $c$-immersions do exist also when $\vert c \vert \geq 1$. 

While we have an evident non-existence result when $[\beta]=0$
(see Section \ref{Asymptotics} for details), for $[\beta]\neq 0$ such question becomes extremely delicate and  difficult to tackle in general. Indeed for $t\leq0,$ the functional  $F_t$ may be unbounded from below and it is not obvious how to detect possible critical points. Even when $t=0,$ that is $|c|=1,$ such a task is rather involved.  
For example, by \cite{Bryant} we know that
(CMC) $1$-immersions of surfaces into the hyperbolic space $\mathbb{H}^{3}$ develop "smooth ends" \cite{Rossman_Umehara_Yamada}, \cite{Umehara_Yamada}, which in a (conformal) compact setting are captured by "punctures" at finitely many points.
Those points will occur naturally in our analysis as ``blow-up" points carrying "quantized" blow-up mass. Indeed, it was shown in \cite{Tar_2} that (CMC) $1$-immersions can be attained only as limit of the (CMC) $c$-immersions of Theorem \ref{thm_A}, as $|c| \to 1^-$, see Theorem \ref{thmprimo} below. However, as we shall see, such passage to the limit may be prevented by the occurrence of "blow-up" phenomena (as $|c| \to 1^- $) and reasonably  in this situation one expected to end up with an immersed "limiting" surface with "conical" singularities at the blow up points.

Therefore, to obtain regular (CMC) $1$-immersions,  we must understand how "blow-up" can be rule out. 

When the genus $\mathfrak{g}=2$, this goal was attained in \cite{Tar_2}, where the existence of (CMC) $1$-immersions is formulated in terms of the Kodaira map:

\begin{equation}\label{Kodaira_map_Intro}
\tau:X\longrightarrow \mathbb{P}(V^{*}),\; V=C_{2}(X)
\end{equation}
described in section 12.1.3 of \cite{Donaldson_Book}. 
Recall that $\tau$ defines a holomorphic map of $X$ into the projective space: $\mathbb{P}(V^*)\simeq \mathbb{P}(\mathcal{H}^{0,1}(X,E))$, $E=T^{1,0}_{X}$. 

The role of the projective space 
$\mathbb{P}(\mathcal{H}^{0,1}(X,E))$ 
is readily explained once we observe that, if the pair $(X,[\beta])$ yields to a (CMC) $1$-immersion subject the constraint \eqref{constraint}, then $[\beta]\neq 0$ and by a simple scaling argument, we obtain a (CMC) 1-immersion corresponding to the data
$(X,\lambda [\beta])$, for all $\lambda \in \mathbb{C}\setminus \{  0 \} $. 
 
Note that, for $E=T^{1,0}_{X}$ we have:  
$\mathbb{P}(\mathcal{H}^{0,1}(X,E)) \simeq \mathbb{P}^{3\mathfrak{g}-4}$
and consequently, 
$\dim_{\mathbb{C}}\mathbb{P}(\mathcal{H}^{0,1}(X,E)) \geq 2$ for $\mathfrak{g}\geq 2.$ Since the image 
$\tau(X)$ defines a complex curve  into $\mathbb{P}(\mathcal{H}^{0,1}(X,E)),$ we get that: $\tau(X) \subsetneq \mathbb{P}(\mathcal{H}^{0,1}(X,E)),$ and 
actually, $\mathbb{P}(\mathcal{H}^{0,1}(X,E))\setminus \tau(X)$ defines a Zariski open subset  (hence dense) in $\mathbb{P}(\mathcal{H}^{0,1}(X,E)).$

For $[\beta] \in \mathcal{H}^{0,1}(X,E) \setminus \{  0 \} $ we let,
\begin{equation} \label{betap}
[\beta]_{\mathbb{P}} = \{ [\lambda \beta], \quad \lambda \in \mathbb{C}\setminus \{  0 \} \} \in \mathbb{P}(\mathcal{H}^{0,1}(X,E))
\end{equation}
the projective representative of the class $[{\beta}]$ in $\mathbb{P}(\mathcal{H}^{0,1}(X,E)).$
The following holds:

\begin{thmx}[\cite{Tar_2}]\label{thmB}
If $\mathfrak{g}=2$, then to every  
$(X,[\beta]) 
\in 
\mathcal{T}_{\mathfrak{g}}(X) \times (\mathcal{H}^{0,1}(X,E)\setminus \{  0 \} )$
$E=T^{1,0}_{X}$, with projective representative 
$[\beta]_{\mathbb{P}}\not \in \tau(X)$, 
there  correspond a \underline{unique} (CMC) 1-immersion of $X$  
into a (germ of) hyperbolic $3$-manifold $N(\simeq S \times \R$), with pull back metric $g$ and $(2,0)$-part $\a$ of the second fundamental form $II$ satisfying:
\begin{eqnarray}\label{1.***}
g=e^u g_X \quad \text{  and  } \quad *_E^{-1}(e^{-u}\a)\in [\b],
\end{eqnarray}with $*^{-1}_E$ the inverse of the Hodge star operator $*_E$ 
\end{thmx}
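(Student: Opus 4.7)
The strategy is a continuation argument approaching the critical value $c^{2}=1$ (equivalently $t=1-c^{2}=0$) from below. Fix the datum $(X,[\beta])$ with $[\beta]\neq 0$, choose any sequence $c_{n}\to 1^{-}$, and invoke Theorem \ref{thm_A} to produce the unique solution $(u_{n},\alpha_{n})$ of the constrained Gauss-Codazzi system \eqref{system_of_equations_introbis} with $t_{n}=1-c_{n}^{2}\to 0^{+}$; equivalently $(u_{n},\eta_{n})$ is the global minimizer of the Donaldson functional $F_{t_{n}}$. If one can pass to the limit smoothly, the limiting pair yields via \eqref{constraint} the desired (CMC) 1-immersion, and uniqueness propagates from the uniqueness built into Theorem \ref{thm_A} together with the single-valued limit profile.

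The main obstruction is the loss of the term $-2te^{u}$ in \eqref{0.8} as $t_{n}\to 0^{+}$, which can force $\max_{X}u_{n}\to +\infty$. A concentration-compactness analysis in the spirit of Theorem \ref{thmprimo} below (drawn from \cite{Tar_2}) yields the following dichotomy: either $\{u_{n}\}$ is uniformly bounded in $C^{0}(X)$ and standard elliptic bootstrap, together with the Codazzi compatibility, delivers smooth convergence to a limit solution of \eqref{system_of_equations_introbis} at $t=0$; or blow-up occurs at finitely many points carrying quantized masses. Crucially, for $\mathfrak{g}=2$ the total mass available from Gauss-Bonnet applied to \eqref{0.8} forces \underline{at most one} blow-up point $p\in X$, and that point carries the minimal quantum, corresponding to a standard spherical bubble profile. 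The remainder of the proof concentrates on ruling out this single-bubble scenario under the Kodaira hypothesis.

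The heart of the argument is the following claim: if blow-up occurs at $p\in X$, then necessarily $[\beta]_{\mathbb{P}}=\tau(p)$. To establish it, I would rescale $u_{n}$ at $p$ by its maximum to extract a standard bubble on $\mathbb{C}$, and couple this with the second equation of \eqref{system_of_equations_introbis}, namely $\bar{\partial}(e^{u_{n}}\ast_{E}(\beta_{0}+\bar{\partial}\eta_{n}))=0$. Testing against an arbitrary holomorphic quadratic differential $\varphi\in V=C_{2}(X)$ and integrating by parts on $X\setminus B_{r}(p)$, then sending $r\to 0$, isolates a residue at $p$ which can be identified via the duality $\mathcal{H}^{0,1}(X,E)\simeq V^{\ast}$ and the explicit construction of the Kodaira map recalled in \S 12.1.3 of \cite{Donaldson_Book}. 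The upshot is that $[\beta]$, viewed as a linear functional on $V$, annihilates precisely the hyperplane of quadratic differentials vanishing sufficiently at $p$, which is exactly the description of $\tau(p)\in\mathbb{P}(V^{\ast})$.

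By contrapositive, if $[\beta]_{\mathbb{P}}\notin \tau(X)$ then no blow-up can occur, the sequence $(u_{n},\alpha_{n})$ converges smoothly to a solution at $t=0$, and the limit realizes a (CMC) 1-immersion of $X$ with pull-back metric $g$ and $(2,0)$-part $\alpha$ of the second fundamental form satisfying \eqref{1.***}. Uniqueness reduces to the uniqueness of the limit $(u,\alpha)$: once the conformal factor $u$ is pinned down, $\alpha$ is determined from $(u,\beta_{0},\eta)$ via the second equation in \eqref{system_of_equations_introbis}, and $u$ itself is singled out by the $\Gamma$-limit of $F_{t_{n}}$ at $t=0$ combined with the minimizing character of each $u_{n}$. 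The principal technical difficulty is unquestionably the residue/asymptotic identification at the blow-up point: it demands sharp error estimates for the bubble approximation on the inner scale, control of the implicitly defined component $\eta_{n}$, and a clean algebraic interpretation of the limiting linear relation in terms of the Kodaira embedding.
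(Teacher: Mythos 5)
Your proposal follows essentially the same route as the paper's source for this statement (\cite{Tar_2}, assembled here from Theorem \ref{thmprimo}, the mass count \eqref{degblowup} which for $\gg=2$ forces a single blow-up point of minimal mass $8\pi$, and Theorem \ref{thmB.3}): continuation from $t=1-c^{2}>0$ along the minimizers of $F_{t}$, the compactness/blow-up dichotomy, and the identification of blow-up at $p$ with the condition $[\beta]_{\mathbb{P}}=\tau(p)$, whose contrapositive gives existence. The only cosmetic difference is that your ``residue'' at $p$ is realized in the paper not by integrating the Codazzi equation by parts but by writing $\int_{X}\beta_{0}\wedge\alpha=e^{-s_{k}/2}\int_{X}e^{\xi_{k}}\langle *_{E}^{-1}\hat{\alpha}_{k},*_{E}^{-1}\alpha\rangle\,dA$ as in \eqref{betawedge} and localizing via the concentration property and the pointwise bubble estimates, which yields $\int_{X}\beta_{0}\wedge\alpha=0$ for all $\alpha\in Q(p)$, i.e.\ $[\beta]_{\mathbb{P}}=\tau(p)$; uniqueness is then exactly Theorem \ref{thmprimo}(ii) rather than a $\Gamma$-limit argument.
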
	

The goal of this note is to extend Theorem \ref{thmB} in two directions. 

Firstly, still in the case of genus $\gg=2$, we shall show that the conclusion of Theorem \ref{thmB} remains valid under a much weaker (in fact sharp) assumption, namely that $[\b]_{\PP}$ is not in the image by $\tau$ of the \underline{six} Weierstrass point of $X$. 

More precisely, we recall that every Riemann surface of genus $\gg=2$ is hyperelliptic. Hence it admits a unique biholomorphic hyperelliptic involution (see \cite{Miranda}, \cite{Griffiths_Harris})
\begin{eqnarray*}
    j: X\to X
\end{eqnarray*} with exactly $2(\gg+1)=6$ (for $\gg=2$) distinct fixed points. Moreover, for $\gg=2$, those  points coincide with the Weierstrass points of $X$ (cf. \cite{Miranda}). We prove:
\begin{thm}\label{thm1}
    If $\gg=2$, and $(X,[\beta]) 
\in 
\mathcal{T}_{\mathfrak{g}}(X) \times (\mathcal{H}^{0,1}(X,E)\setminus \{  0 \} )$ satisfies:  
\begin {equation} \label {weierstrass} [\beta]_{\mathbb{P}}\not \in \{\tau(q), \text{ with }\, q\in X: j(q)=q\}, \end{equation} then there exist a \underline{unique} (CMC) 1-immersion of $X$  
into a hyperbolic   $3$-manifold $N(\simeq X \times \R$) satisfying \eqref{1.***}.
\end{thm}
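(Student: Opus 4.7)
By Theorem \ref{thm_A}, for each $c\in (-1,1)$ the datum $(X,[\b])$ determines a unique pair $(u_{c},\a_{c})$ solving the constrained Gauss-Codazzi system \eqref{constraint}-\eqref{system_of_equations_introbis} with $t=1-c^{2}$. As $c\to 1^-$, by Theorem \ref{thmprimo} either $(u_{c},\a_{c})$ converges (producing the sought (CMC) 1-immersion) or blow-up develops. I argue by contradiction: assume blow-up occurs. The blow-up analysis of \cite{Tar_2} for $\gg=2$ concentrates the blow-up at a single point $p\in X$ and identifies $[\b]_{\mathbb{P}}=\tau(p)$.

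The key new input is a symmetrization via the hyperelliptic involution $j:X\to X$. In local hyperelliptic coordinates $y^{2}=f(x)$ with $\deg f=6$, the basis $\omega_{i}=x^{i-1}\,dx/y$ ($i=1,2$) of $H^{0}(X,K_{X})$ satisfies $j^{*}\omega_{i}=-\omega_{i}$, so every symmetric product $\omega_{i}\otimes\omega_{j}$ is $j^{*}$-invariant. Since these three independent products span $C_{2}(X)$ (which has dimension $3$), we conclude $j^{*}=\mathrm{id}$ on $C_{2}(X)$. The uniqueness of the uniformizing metric gives $j^{*}g_{X}=g_{X}$, and since $j$ is orientation-preserving the duality $\mathcal{H}^{0,1}(X,E)\simeq (C_{2}(X))^{*}$ propagates this to $j^{*}=\mathrm{id}$ on $\mathcal{H}^{0,1}(X,E)$. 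In particular, the datum $(X,[\b])$ is \emph{automatically} $j$-invariant for every $[\b]$.

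The Gauss-Codazzi system \eqref{system_of_equations_introbis} and the constraint \eqref{*} are covariant under $j$ (the Hodge operator $*_{E}$ commutes with $j^{*}$ because $g_{X}$ is $j$-invariant), so the pair $(u_{c}\circ j,\,j^{*}\a_{c})$ also solves the constrained system for the data $(X,[\b])$. The uniqueness in Theorem \ref{thm_A} then forces $u_{c}\circ j=u_{c}$, making the blow-up set of $e^{u_{c}}$ $j$-invariant. Since this set reduces to the single point $p$, we must have $j(p)=p$, i.e.\ $p$ is a Weierstrass point; then $[\b]_{\mathbb{P}}=\tau(p)$ violates \eqref{weierstrass}. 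Hence no blow-up can occur; passage to the limit $c\to 1^-$ yields the (CMC) 1-immersion satisfying \eqref{1.***}, with uniqueness inherited from that of the $(u_{c},\a_{c})$.

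The main delicate step is the $j$-equivariance chain of the middle paragraph and its interaction with the uniqueness in Theorem \ref{thm_A}, which transfers $j$-symmetry from the data to the solution. The argument is specific to $\gg=2$: for $\gg\geq 3$ the $(-1)$-eigenspace of $j^{*}$ on $C_{2}(X)$ becomes nontrivial and this trick breaks down, consistent with Theorem \ref{thm1} being confined to the hyperelliptic genus-$2$ setting. Once blow-up has been excluded in this way, the remainder (existence and uniqueness of the limiting pair, and its identification with a (CMC) 1-immersion of $X$ into a hyperbolic $3$-manifold) parallels the corresponding argument for Theorem \ref{thmB}.
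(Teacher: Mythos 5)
Your proposal is correct and follows essentially the same route as the paper: exploit the equivariance of the constrained Gauss--Codazzi system (equivalently, of the Donaldson functional $F_t$, as in Appendix 2) under the hyperelliptic involution $j$, use the uniqueness of the solution/minimizer for $t>0$ to conclude $u_t\circ j=u_t$, and deduce that the single genus-$2$ blow-up point must be a fixed point of $j$, contradicting \eqref{weierstrass}. The only minor variation is in how the $j$-invariance of $[\b]$ is obtained: you compute $j^*=\mathrm{id}$ on $C_2(X)$ from the explicit hyperelliptic basis $x^{i-1}dx/y$, whereas the paper (Proposition \ref{propA.2}) shows directly that $j^*\b_0=\b_0$ for harmonic $\b_0$ by observing that $*_E(j^*\b_0-\b_0)\in C_2(X)$ vanishes at the six Weierstrass points while $\deg(\otimes^2 K_X)=4$.
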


Actually, the sufficient condition \eqref{weierstrass} (for existence and uniqueness) is also necessary, in the sense that, when it fails then the (CMC) $c$-immersions in Theorem \ref{thm_A} do not pass to the limit, as $|c| \to 1^-,$ and so non-existence holds. 
This fact will be proved in a forthcoming paper \cite {Tar_Tra3}, by means of the equivalent formulation of problem \eqref{system_of_equations_introbis} in terms of Hitchins' sefduality equations \cite{Hitchin} with respect to a suitable nilpotent $SL(2,\mathbb{C})$ Higgs bundle, see \cite{Alessandrini_Li_Sanders} and \cite{Huang_Lucia_Tarantello_2} for details.
 
In Theorem \ref{thmg2} below we shall provide a more complete version of Theorem \ref{thm1}, and it  will be proved in Section \ref{mainTheorems} on the basis of  the invariance under bi-holomorphism: $X\to X$ of the (constrained) Gauss-Codazzi equations  (\ref{system_of_equations_introbis}) and correspondingly of the Donaldson functional $F_t$ in \eqref{F_t}, see Appendix 2 for details.

Our other contribution will be to establish a suitable extension of Theorem \ref{thmB} for higher genus. 

Our most complete result will be specific of genus $\gg=3$ and it is stated in Theorem \ref{thm2} below.  However, from our investigation it emerges a rather convincing picture, in the sense that we expect Theorem \ref{thm2} to remain valid in the general case of $\gg \geq3.$  

To handle the case of $\gg \geq3,$ where multiple (up to $\gg-1$) blow up points may occur,  firstly we need to identify a suitable replacement of the Kodaira map (defined in $X$) to a map defined on the symmetric product $X^{(\nu)}$ of $\nu$-copies of $X$ modulo permutations, for  $1\leq \nu\leq \gg-1.$ 
Recall that $X^{(\nu)}$ defines a smooth complex manifold of dimension $\nu$ (see \cite{Griffiths_Harris}), and it can be identified with the space of non zero effective divisors of degree $\nu \geq  1$ on $X$. Indeed, the $\nu$-ple representing an element in $X^{(\nu)}$ is identified with the effective  divisor $D=\sum_{j=1}^k n_j p_j$, where the \underline{support} of D: $supp \,D = \{p_1,\ldots,p_k\}\subset X$ is formed by the \underline{distinct} points contained in the given $\nu$-ple,  
and the \underline{multiplicity}  $n_j\in \NN$ of the point $p_j$ is defined by the number of times such point appears in the given $\nu$-ple, for $j=1,\ldots, k.$  Finally the degree of D is given by: $deg (D) =\sum_{j=1}^k n_j=\nu$. 

Divisors naturally arise also in connections with holomorphic quadratic differentials. Indeed, 
we know that every non-trivial holomorphic quadratic differential $\a\in C_2(X)$ admits $4(\gg-1)$ zeroes counted with multiplicity . So the zero set of $\a$ identifies in a natural way an effective divisor in $X^{(4(\gg-1))}$ which is denoted by: $\text{div}(\a)$.

For $1\leq \nu\leq \gg-1$ and an effective divisor $D\in X^{(\nu)}$ we let, 
$$
Q(D)=\{\a\in C_2(X): \text{div}(\a) \geq D\} \\
$$ namely, every $\a\in Q(D)$ must vanish at each point of the support of $D$ with greater or equal multiplicity.\\
In particular for $x_0 \in X,$ by taking $D = x_0$ we have: $Q(x_0) = \{ \alpha\in C_2(X): \alpha(x_0)=0\},$ and we recall: 
\begin{eqnarray}\label{0**} 
 [\beta]_{\mathbb{P}}= \tau(x_{0}) \iff \quad  \int_X \b\wedge \,\a=0, \quad \forall 
 \a\in Q(x_0).
 \end{eqnarray}
Thus, for $\gg\geq 2$ and for given $1\leq \nu\leq \gg-1$,  in Section \ref{Preliminaries}  we identify analytic irreducible subvarities $\wt\S_{\nu}\subset \PP(\H^{0,1}(X,E))$, such that :$$
    \wt{\S}_1 =\tau (X) (=\text{ image of Kodaira map )}\subset \wt\S_2\subset\cdots \subset \wt \S_{\gg-1} \subsetneq \PP(\H^{0,1}(X,E)),
$$
\begin{eqnarray}\label{1**}
[\b]_{\PP}\in \wt\S_\nu \iff \quad \exists \text{ divisor } D\in X^{(\nu)}: \int_X \b\wedge \,\a=0, \quad \forall 
 \a\in Q(D), \quad \quad 
\end{eqnarray}
$\text{dim}(\wt \S_{\nu}) \leq 2\nu-1<3\gg-4.$ \\ 

Again, $\PP(\H^{0,1}(X,E))\setminus \wt{\S}_{\gg-1}$ is a Zariski open set  (hence dense)  in $\PP(\H^{0,1}(X,E))$.\\

Please note that the "orthogonality" conditions  \eqref{0**} and  \eqref{1**} do not depend neither on the choice of the element in the projective class $[\b]_\PP$ nor on the chosen representative in the cohomology class $[\b]\in \H^{0,1}(X,E)$.\\

It was pointed out to us that geometrically, the sub-variety $\wt\S_{\nu}$ is exactly the $\nu$-secant variety of $Z=\tau(X)$ (cf. \cite{ACGH}), and we refer to Appendix \ref{append3} and Lemma \ref{secant} for details .  

We prove:
\begin{thm}\label{thm2}
If $\gg=3$ and $(X,[\b])\in \mathcal{T}_{\mathfrak{g}}(X) \times \mathcal{H}^{0,1}(X,E)$ satisfies:
$$
[\b]\neq0 \text{ and } [\b]_\PP\notin \wt\S_{2},
$$ 
where $\wt\S_{2}$ is the 2-secant variety of $\tau(X).$ Then there  exists a \underline{unique} (CMC) 1-immersion of $X$  
into a (germ of) hyperbolic $3$-manifold $N(\simeq X \times \R$) 
satisfying \eqref{1.***}.
\end{thm}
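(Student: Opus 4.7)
I would construct the desired (CMC) 1-immersion as the limit, as $|c|\to 1^-$ (equivalently $t=1-c^2\to 0^+$), of the (CMC) $c$-immersions furnished by Theorem~\ref{thm_A}. For each $t>0$, let $(u_t,\eta_t)$ be the unique global minimizer of the Donaldson functional $F_t$, so that $(u_t,\eta_t)$ solves the constrained Gauss--Codazzi system~\eqref{system_of_equations_introbis}. The goal is to show that, under the hypothesis $[\beta]_\PP\notin \wt\S_{\gg-1}$, the family $(u_t,\eta_t)$ remains compact as $t\to 0^+$, so that a smooth limit $(u_0,\eta_0)$ exists and produces the (CMC) 1-immersion via $g=e^{u_0}g_X$, $\a=e^{u_0}*_E(\beta_0+\bar\partial\eta_0)$.

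\textbf{Core step.} The sole obstruction to compactness is blow-up of $u_t$. Concentration--compactness applied to the Liouville-type equation in~\eqref{system_of_equations_introbis} forces such blow-up to occur at finitely many points with quantized masses, yielding an effective blow-up divisor $D\in X^{(\nu)}$, with $1\leq \nu\leq \gg-1=2$ (the upper bound following from a total-mass estimate via Gauss--Bonnet together with the $L^1$-bound on $e^{u_t}$). The crux of the argument is to show that blow-up along such $D$ forces $[\beta]_\PP\in\wt\S_\nu$. For this I would analyze the holomorphic Hopf differentials $\a_t=e^{u_t}*_E(\beta_0+\bar\partial\eta_t)\in C_2(X)$. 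Applying the refined asymptotic analysis of Theorem~\ref{thm3}, after a suitable renormalization $\a_t$ subconverges to a non-zero $\a_\infty\in C_2(X)$ whose zero divisor dominates $D$; that is, $\a_\infty\in Q(D)$. Combining the constraint~\eqref{*} with integration by parts (Stokes' theorem on $X$, using that $e^{-u_t}\a_t=*_E(\beta_0+\bar\partial\eta_t)$) yields
\[
\int_X \beta\wedge\a=0\qquad\text{for every }\a\in Q(D),
\]
which by~\eqref{**} places $[\beta]_\PP$ in $\wt\S_\nu\subseteq\wt\S_{\gg-1}$, contradicting the hypothesis. Hence blow-up is ruled out.

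\textbf{Conclusion and uniqueness.} Once blow-up is excluded, standard elliptic estimates give uniform bounds on $(u_t,\eta_t)$; extracting a smooth subsequential limit produces a solution $(u_0,\eta_0)$ of the $t=0$ constrained system, and thus a (CMC) 1-immersion of $X$ satisfying~\eqref{1.***}. Uniqueness is inherited from the uniqueness of the minimizer at each $t>0$: a second (CMC) 1-immersion attached to $(X,[\beta])$ would, via a perturbation/implicit-function argument near $t=0$, produce a second branch of solutions for small $t>0$, contradicting Theorem~\ref{thm_A}.

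\textbf{Main obstacle.} The principal difficulty is the blow-up analysis at the core step, specifically when $\deg D =2$, either two distinct simple blow-up points, or a single point carrying double mass. The single-point minimal-mass case was handled in~\cite{Tar_2}; the non-minimal and multi-point regimes needed in genus $\gg=3$ are precisely what Theorem~\ref{thm3} is designed to deliver. The most delicate technical task is to show that the limit $\a_\infty$ does not degenerate to zero and has vanishing multiplicities at each $p_j\in\operatorname{supp}(D)$ at least equal to the concentration mass at $p_j$, so that $\a_\infty$ genuinely lies in $Q(D)$ and the orthogonality produced by~\eqref{*} truly forces $[\beta]_\PP\in\wt\S_\nu$.
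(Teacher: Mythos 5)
Your overall architecture matches the paper's: pass to the limit $t\to 0^+$ in the minimizers $(u_t,\eta_t)$ of $F_t$, observe that blow-up of $\xi_t$ is the only obstruction, note that for $\gg=3$ the blow-up divisor has degree at most $\gg-1=2$ so every multiplicity is $1$ or $2$, invoke Theorem~\ref{thm3} to get the orthogonality relation, and use Proposition~\ref{***} to contradict $[\b]_\PP\notin\wt\S_{\gg-1}$; compactness then gives existence, and Theorem~\ref{thmprimo} gives uniqueness. However, your ``core step'' replaces the actual mechanism by one that does not work. First, the claim that the renormalized Hopf differentials subconverge to $\a_\infty\in Q(D)$ (zero divisor dominating the blow-up divisor $D$) is false in general: by \eqref{3.58a'} a blow-up point carrying the minimal mass $8\pi$ need \emph{not} be a zero of $\hat\a_0$ at all, so $\operatorname{supp}(D)\not\subseteq Z^{(0)}$ in general. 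Second, and more seriously, even if $\a_\infty\in Q(D)$ held, membership of one particular differential in $Q(D)$ combined with Stokes' theorem does not yield $\int_X\b\wedge\a=0$ for \emph{every} $\a\in Q(D)$; there is no implication from ``the limit of $\hat\a_t$ vanishes on $D$'' to ``$[\b]$ annihilates $Q(D)$.'' The orthogonality in the paper is obtained by an entirely different route: the pairing is localized at the blow-up points via \eqref{betawedge}, each local integral is expanded using the pointwise asymptotics of $e^{\xi_k}$ near a blow-up point (Lemmas~\ref{lem1}--\ref{lem3} and Theorem~\ref{mainthmasymp}, which is the hard analytic content, including the ``collapsing zeroes'' regime), and the divergent normalizing factors $m^{(1)}_{j,k}, m^{(2)}_{j,k}$ are shown to be bounded by testing against the special differentials $\a_j^{(1)},\a_j^{(2)}$ of Lemma~\ref{zeros}; only then does vanishing of $a_j(0)$ and $a_j'(0)$ at the blow-up points force the pairing to vanish. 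You must either supply this local analysis or cite Theorem~\ref{thm3} as a black box (which is legitimate, since it delivers exactly \eqref{1.0**} for some $0\neq\wt D\leq D$, and $Q(\wt D)\supseteq Q(D)$ still places $[\b]_\PP$ in $\wt\S_{\deg\wt D}\subseteq\wt\S_{\gg-1}$).

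A smaller point: your uniqueness argument via an implicit-function/perturbation construction of a second branch for small $t>0$ is not justified, since it would require invertibility of the linearized operator at the putative second solution. The paper does not need this: Theorem~\ref{thmprimo}(ii) already states that any solution of the $t=0$ constrained system is the global minimum of $F_0$ and its \emph{only} critical point, so uniqueness is immediate once existence is established.
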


As in \cite{Tar_2} (and in view of Theorem \ref{thmprimo} below) we shall establish Theorem \ref{thm2} by means of 
a detailed asymptotic analysis of the (CMC) $c$-immersions given by 
Theorem \ref{thm_A} (with $\vert c \vert < 1$), as $|c| \to 1^{-}$.\\

In the framework of Hitchin 's self-duality theory (\cite{Hitchin}) in this way we pursue the limiting behaviour of suitable Higgs bundles under the $\mathbb{C}^*$ action. In this context  the role of secant varieties also appeared in \cite{Wilkin}.\\

\medskip

Thus, for the unique solution pair: $(u_c\, , \alpha_c),$ we need to analyze what happens, when $|c| \to 1^{-}$ . Due to the Liouville-type character of the Gauss equation \eqref{0.6}, we find that the function $ \xi_c$ := $-u_{c }$+ log($ \int_X ||\alpha_{c}||_{L^2}^{2}dA$) can blow-up only around finitely many points (blow-up points) see \cite{Brezis_Merle}, and as shown in \cite{Li_Shafrir}, \cite{Bartolucci_Tarantello} and \cite{Tar_1}, each blow-up point carries a "quantized" blow-up mass given by an integral multiple of $8\pi$, see also \cite{Lee_Lin_Wei_Yang}, \cite{Lee_Lin_Yang_Zhang} for related results.

The new difficulty here is that blow-up can occur at an accumulation point of different zeroes of the holomorphic quadratic differentials  $\a_c,$ as $ |c| \to 1^{-}.$  Thus in this situation, when $\gg \geq 3,$ we can no longer ensure that "blow-up" occurs with the  "concentration" property,  see \cite{Suzuki_Ohtusuka}, \cite{Lin_Tarantello}, \cite{Lee_Lin_Tarantello_Yang} \cite{Tar_1}, and therefore (as expected) we are lead to a "limiting" metric with conical singularities at the blow-up points with ``conical" angle  an integral multiple of $8\pi$ ( and not the usual $4\pi$ in view of our normalization of the conformal factor), see Section \ref{Asymptotics} for details.

Therefore, also to a blow-up situation we can associate an effective "blow-up" divisor  $D,$ whose support is formed by the set of (distinct) blow-up points with multiplicities given by the corresponding (integral) blow-up masses, see \eqref{sigma_q} and definition \ref{blowupdiv}.
By the Gauss-Bonnet Theorem we have that a "blow-up" divisor $D$ must satisfy: $D\in X^{(\nu)}$ with $1\leq \nu\leq \gg-1$. 

The very aim of our blow-up analysis will be to show that, if blow-up occurs and we let $D\in X^{(\nu)}$ the corresponding blow up divisor ($1\leq \nu\leq \gg-1$) then:
 \begin{eqnarray}\label{ortog}
    \int_X \b\wedge \a =0 \quad \forall \a \in Q(D) \ \mbox{with}\,\, D \,\mbox{ the blow up divisor}.
\end{eqnarray}  
More precisely, we show that \eqref{ortog} holds with a suitable divisor  $ 0 \neq \widetilde D \leq D,$ see Section \ref{Asymptotics} for details.

Unfortunately (by mere technical reasons), at present we can  show \eqref{ortog} only when we assume that each point in the support of the "blow-up" divisor admits multiplicity  at most $2.$

We recall that, based on the accurate blow-up analysis developed in \cite{Tar_1}, the ``orthogonality" condition \eqref{ortog}  was firstly pointed out in \cite{Tar_2}, when one assumes that blow-up occurs with the least blow-up mass $8\pi$. Namely when, all points in the support of the "blow-up" divisor $D$ admit multiplicity 1 (see Theorem \ref{thmB.3}). Such result applies directly to the case of genus $\gg=2$  and permits to establish Theorem \ref{thmB}. 

In Theorem 3 below, we show that \eqref{ortog}  remains valid also when we allow the "blow-up" divisor to contain points in its support  of multiplicity  2. 
In this way, we shall be able to handle the case of genus $\gg=3$ and obtain Theorem \ref{thm2}.

As we shall see in Section \ref{Asymptotics}, even under the above mentioned restriction on the multiplicity of the blow-up points, the blow-up analysis becomes immediately more involved, and in order to attain  \eqref{ortog} one needs to resolve new analytical difficulties beyond \cite{Tar_1}, \cite{Tar_2}.

However, on the basis of the analysis we have contributed here, it should be possible to elaborate additional scalings together with inductive arguments to remove such restriction, a task we hope to carry out in future work.

\section{Preliminaries}\label{Preliminaries}

Let $X\in \mathcal{T}_{\mathfrak{g}}(S)$ be a given Riemann surface with (unique) hyperbolic metric $g_{X}$ and induced scalar product
$\langle\cdot,\cdot\rangle$, norm $\Vert \cdot \Vert$ and volume 
element $dA$. 

For any given point $x_0 \in X$, 
we can introduce holomorphic $\{ z \}$-coordinates around $x_0$ centred at the origin (namely $x_0$ is mapped to $0$) and for small $r>0$ we denote by : 
\begin{equation}\label{palla} 
\begin{array}{l}
B(x_0;r), \,\, \mbox{the geodesic ball centred at $x_0$ with radius r},  \\ \,\,  
\Omega_r  \,\, \mbox{the image of $B(x_0;r)$ in $\mathbb {C}$, with } \, 0\in \Omega_r, \\
B_{r} \mbox{ the disc in $\mathbb {C}$ of center the origin and radius $r$.}
\end{array} 
\end{equation}
Hence, for $\delta>0$ sufficiently small we have: $B_{\delta}  \subseteq \Omega_r.$

Moreover,  for $z=x+iy\in \Omega_{r}$  ($r>0$ small) we have the following local expression of the conformal and Riemannian structure of $X$:
\begin{equation}\label{coord} 
\begin{array}{l}
\partial=\frac{\partial}{\partial z}=
\frac{1}{2}(\frac{\partial}{\partial x} - i \frac{\partial}{\partial y})
\; \text{ and } \; 
\bar{\partial}=\frac{\partial}{\partial \bar{z}}
=
\frac{1}{2}(\frac{\partial}{\partial x} + i \frac{\partial}{\partial y}) \\
dz=dx + i dy,\; d \bar{z} = dx-idy,\\
g_{X}=e^{2u_X} dzd\bar{z}\quad
u_X \; \text{smooth,} \\ 
dA = \frac{i}{2}e^{2u_{X}} dz \wedge d \bar{z},
\end{array} 
\end{equation}
with the wedge $\wedge$ product defined as usual on complex valued forms.

Furthermore, without loss of generality, we can consider the so called "normal" coordinates at $x_0,$ by assuming further that $u_X$ satisfies:
\begin{equation}\label{hypcoord} 
u_X(0) = |\nabla u_X(0)|=0. 
\end{equation}

In addition, in such local coordinate the Laplace-Beltrami operator $\Delta$ on $(X, g_X)$ can be expressed (locally) as follows:
$\Delta=4e^{-2u_X} \partial \bar{\partial}$ 
and in particular we have:
$4 \partial \bar{\partial} u_{X} = e^{2u_{X}}$ in $\Omega_r$ . 

Actually (with abuse of notation) in the sequel we also denote 
the \underline{flat} Laplacian by  
$\Delta=4 \partial \bar{\partial}$, 
unless confusion arises.  
\vskip0.5cm
Throughout this paper, we let:
\begin{eqnarray}\label{E}
E=T_X^{1,0} \text{ the holomorphic tangent bundle of $X$} 
\end{eqnarray}
with dual:
$$
E^*=(T_{X}^{1,0})^{*}=K_{X} \text{ the canonical bundle of $X.$}
$$

The holomorphic line bundles $E$ and $E^*$ will be equipped with the complex structure induced by $X$, and with an hermitian product induced by a metric $g$ (conformal to the metric $g_X$) defined in $X$.  
Thus, on sections and forms valued on $E$ (or $E^*$), we have a well defined  d-bar operator denoted by: $\bar\p$ and a fiber-wise hermitian product $\langle \cdot,\cdot \rangle_{g}$ and norm $\Vert \cdot \Vert_{g}.$ 

In the sequel, unless confusion arises, we shall drop the subscript $g_X$ in the hermitian product and norm induced by $g_X.$

We let:
$$ \begin{array}{l}
   A^{0}(E)=
\{\text{smooth sections of $X$ valued on $E$}\}, \\  A^{0,1}(X,E)
=
\{ \text{$(0,1)$-forms valued on $E$ }  \},
\end{array} $$
and recall that the elements in $A^{0,1}(X,E)=A^{0,1}(X,\mathbb{C}) \otimes E$ are also known as Beltrami differentials.    
By using the operator:
$$\bar\p=\bar\p_E
:
A^{0}(E)
\longrightarrow 
A^{0,1}(X,E).
$$  
we can define the $(0,1)$-Dolbeault cohomology group as follows:
\begin{equation}\label{Dol}
\mathcal{H}^{0,1}(X,E)
=
A^{0,1}(X,E)/\overline{\partial}(A^{0}(E)),
\end{equation}
so that, for any Beltrami differential $\beta \in A^{0,1}(X,E),$ there correspond the cohomology class: 
$$[\beta]=\{ \beta + \bar{\partial}\eta,
\; \forall \; \eta \in A^{0}(E) \}\in \mathcal{H}^{0,1}(X,E).$$

Similarly, we let: 

\begin{eqnarray*}
A^{1,0}(X,E^*)
= \{ \text{$(1,0)$-forms valued on $E^*$ }  \} = A^{1,0}(X,\mathbb{C}) \otimes E^*.
\end{eqnarray*}
We can consider the \underline{wedge product} to act on: $A^{0,1}(X,E) \times A^{1,0}(X,E^{*}),$ so for $\beta \in A^{0,1}(X,E)$ and $\alpha \in A^{1,0}(X,E^{*}),$ we have: $\beta \wedge \alpha
 \in A^{1,1}(X,\mathbb{C})$ 
satisfying the well-known properties of the wedge product, see \cite{Griffiths_Harris}  Chapter 2 Section 2.

Consequently, we obtain the 
bilinear form: 
\begin{equation}\label{wedge_product_map}
\begin{split}
 A^{1,0}(X,E^{*}) \times A^{0,1}(X,E) \longrightarrow \mathbb{C}
\;:\;
(\alpha,\beta)
\longrightarrow 
\int_{X}   \beta \wedge \alpha,
\end{split}
\end{equation}
which, by Serre duality (see \cite{Voisin}), is non-degenerate and induces the isomorphism:
\begin{equation}\label{isomorphism_A_1_0_To_A_0_1}
A^{1,0}(X,E^{*})
\simeq 
(A^{0,1}(X,E))^{*}.
\end{equation}
\medskip

For $x \in X,$  we consider the anti-linear Hodge star  operator: \\ $  *_{x}: $$ \,  [T_{x}(X)^{*}]^{0,1} \to  [T_{x}(X)^{*}]^{1,0} $  defined in the usual way,  see \cite{Wells}, \\ and we recall that a   map $L : V \to W$ between complex vector spaces is called anti-linear, if it is $\mathbb{R}$ linear and $L(iv) = -i L(v)  \ \forall v \in V.$

Also for   $\varphi \in [T_{x}(X)^{*}]^{0,1}\,\,\
 \mbox { and }  e, f \in E_x$ we recall the anti-linear isomorphisms: $$\sharp_x : E_x \to E^*_x\quad  \mbox{ and } \quad  *_{x}: \,  [T_{x}(X)^{*}]^{0,1} \otimes E_x \to  [T_{x}(X)^{*}]^{1,0}\otimes E^*_x $$ defined as follows: 

$\sharp_x(e)(f) = <f,e>_x,$ and $*_{x}(\varphi \otimes e) = *_{x}(\varphi) \otimes \sharp_x(e),$  for every $x \in X.$
\medskip

In this way, we obtain the Hodge star operator  defined on forms:  
 \begin{equation}\label{2.8*hodge_operator_intro}
*_{E} : A^{0,1}(X,E) \longrightarrow  A^{1,0}(X,E^{*}),
\end{equation}
where we  see that, for given $\beta \in A^{0,1}(X,E),$  the form 
 $*_{E}\beta \in A^{1,0}(X,E^*)$  is identified by the  condition: 
$$ 
\xi \wedge *_{E}\beta  
=
\langle \xi,\beta \rangle \,dA,
\quad \forall \; \xi \in A^{0,1}(X,E).$$ We have:
\begin{equation}\label{antiso}
< *(\beta_1), *(\beta_2)> = \overline{ < \beta_1, \beta_2>} =  < \beta_2, \beta_1> \,\,\, \mbox{ and }  *_E(i \beta) = -i *_E(\beta). 
\end{equation}

Hence, the Hodge operator $*_{E}$ depends on the metric $g_X$ and  it
defines an invertible, norm preserving operator, with inverse $*_E^{-1}.$ In fact, \eqref{2.8*hodge_operator_intro} expresses the (metric dependent) \underline{isomophism}  between
$A^{1,0}(X,E^{*})$ and $ A^{0,1}(X,E)$.\\ 

In local holomorphic  coordinates, for  $*=*_{E}$ we have:
 \begin{equation}\label{**}
  * dz = i d \bar{z}, \,\,\,   *d \bar{z} = - i d z, \quad  \sharp(\frac{\partial}{\partial z}) = \frac{e^{2u_{X}}}{2} dz, \end{equation} 
and in particular, for $\beta = \beta(z) (d \bar{z} \otimes\frac{\partial}{\partial z})$ there holds:
 \begin{equation}\label{**bis}
*\beta= *(\beta(z) d \bar{z}\otimes\frac{\partial}{\partial z})=\frac{-i}{2}\bar{\beta}(z)e^{2u_{X}}(dz)^2.
 \end{equation}

Moreover, for the local expression 
of the (fiberwise) norm (induced by $g_X$) of sections and forms there holds:
\begin{equation}\label{norme}
\begin{split}
\eta = \eta(z) (\frac{\partial}{\partial z}) \quad \mbox { then } \,\,\, \Vert \eta \Vert
= \; 
\vert \eta(z) \vert \frac{e^{u_{X}(z)}}{\sqrt{2}},
\;\; 
\eta\in A^{0}(E); 
\\ 
\beta = \beta(z) (d \bar{z} \otimes\frac{\partial}{\partial z}) \quad \mbox{ then } \,\,\, \Vert \beta  \Vert = \vert \beta(z) \vert,  
\;\;
\beta \in A^{0,1}(X,E); \\
\a= h(z)(dz)^{2} \,\, \mbox{ then } \Vert \alpha \Vert =2 \vert h  \vert e^{-2 u_X}, \,\,\, \a \in A^{1,0}(X,E^*).
\end{split}
\end{equation}

In general, for a given holomorphic line bundle  $L$,  we denote by $H^0(X,L)$ the space of global holomorphic sections of $L$. Clearly, $H^0(X,L)$ is a complex linear space, and since $X$ is compact, it admits finite dimension ( see \cite{Miranda} Proposition 3.16 ). Moreover, every $\alpha \in H^0(X,L) \setminus \{ 0 \}$ admits the same number of zeroes counted with multiplicity (see  \cite{Miranda} Lemma 1.5 ) and thus we can define the degree of $L,$ denoted by  $\deg L, $ as given by  the number of zeroes counted with multiplicity of a non trivial section in $H^0(X,L).$ 

Since, 
$$ \begin{array}{l}
C_{2}(X)
= H^{0}(X,\otimes^{2}(K_X))
=\{ 
\alpha \in A^{0}(X,\otimes^{2}(K_X))
\; : \; 
\bar{\partial} \alpha=0
\} \; \\ \\
=\{ \alpha \in A^{1,0}(X,E^{*})
\; : \;
\bar\partial 
 \alpha=0 \} \end {array}$$
we can use the Riemann-Roch Theorem for $L=\otimes^{2}(K_X),$ to find:
\begin{equation}\label{dim}
\dim_{\mathbb{C}}C_{2}(X)= 3(\mathfrak{g}-1),
\end{equation}
(see \cite{Miranda} and \cite{Narasimhan}).
\medskip
Furthermore, we know that,
$deg(K_X) = 2(\gg-1)$ (see \cite{Miranda} Chapter V Prop. 1.14), therefore:
\begin{eqnarray}\label{2.01}
\deg \otimes^2K_X = 4 (\gg-1), 
\end{eqnarray}
and consequently,
\begin{eqnarray}\label{2.03}
\text{any } \, \a\in C_2(X)\setminus\{ 0 \} \,\, \text {admits}\, \,  4(\gg-1) \text{ zeroes counted with multiplicity. } 
\end{eqnarray}

In local holomorphic $z$-coordinates around a given $x_0$ (centred at the origin), any $\a\in C_{2}(X)$ (as specified in \eqref{hypcoord}) takes the expression: 
\begin{equation}\label{alpha1}
\a= h(dz)^{2} \,\, \mbox{ (and } \Vert \alpha \Vert =2 \vert h  \vert e^{-2 u_X}) \,\,\, \mbox{ $h$ holomorphic around the origin.}
\end{equation}

In this way, it is clear what we mean by a zero of $\alpha$ and corresponding multiplicity, as indeed those notions are independent of the chosen holomorphic coordinates. 
In particular, if $q$ is a zero of $\alpha$ with multiplicity $n$, then 
in  local $z$-coordinates at $q$ centred at the origin (defined in $B_r( q ; r), $ $r>0 $ small) we have:
 $$\alpha = z^{n} \psi(z)(dz)^{2} \quad \mbox{ $\psi$ holomorphic and never vanishing
  in $\Omega_{r},$}$$ 
 and we have: $\Vert \alpha \Vert=2 \vert z \vert^{n} \vert \psi(z) \vert e^{-2 u_X}.$\\ 
Also observe that,  $\partial \bar{\partial} \ln \vert \psi \vert^{2}=0$ in $\Omega_{r}$, a property we shall use in the sequel.  
\vskip.1cm
By Stokes theorem, for $\alpha \in C_{2}(X)$ we have:
$
\int_{X} \bar{\partial}\eta  \wedge \alpha = 0,
\; \forall \; \eta \in A^{0}(E) 
$, 
and we see that the bilinear form \eqref{wedge_product_map} is 
well defined and non degenerate
when restricted on the space:
$C_{2}(X) \times \mathcal{H}^{0,1}(X,E)$, 
and it induces the isomorphism:
\begin{equation}\label{C_kappa_X_isometry}
\begin{split}
C_{2}(X) \simeq (\mathcal{H}^{0,1}(X,E))^{*}.
\end{split}
\end{equation}

By Dolbeault decomposition, any $\beta \in A^{0,1}(X,E)$
admits the unique decomposition: 
$$
\beta = \beta_{0} + \bar{\partial}\eta
\; \text{ with  $\beta_{0}$ \underline{harmonic} (with respect to $g_{X}$) and } \; 
 \eta \in A^{0}(E).
$$
Hence, every class  $[\beta] \in \mathcal{H}^{0,1}(X,E)$ is uniquely identified
by its harmonic representative $\beta_{0} \in [\beta]$ and moreover: $*_E\b_0\in C_2(X)$.
Thus, in analogy to \eqref{2.8*hodge_operator_intro}, we have the isomorphism:
$$
\mathcal{H}^{0,1}(X,E) \longrightarrow C_{2}(X)
:
[\beta] \longrightarrow  *_{E}\beta_{0},
$$ 
in other words, for $\alpha \in C_{2}(X) \subset A^{1,0}(X,E^{*})$ there exist a unique \underline{harmonic}
Beltrami differential $\beta_{0}$: $*_{E}\beta_{0}=\alpha$ or equivalently $*_{E}^{-1}\alpha=\beta_{0}$. 

Also notice that, for 
$[\beta]\in \mathcal{H}^{0,1}(X,E)$ with harmonic $\beta_{0} \in [\beta]$,
there correspond an (unique) element in $(C_{2}(X))^{*}$ defined as follows:
\begin{equation}\label{dual}
C_{2}(X)\longrightarrow \mathbb{C}
:
\alpha \longrightarrow \int_{X} \beta_{0} \wedge \alpha = 
\int_{X} (\beta_{0} +\bar{\partial}\eta) \wedge \alpha ,
\end{equation}
which indeed is well defined independently of any chosen element in the  class $ [\beta].$ In turn, we have that the dual space $(C_2(X))^*$ can be identified with the space of harmonic Beltrami differentials (with respect to $g_X$).\\

At this point, in view of \eqref{dim} and by recalling that the Teichm\"uller space  $\mathcal{T}_{\mathfrak{g}}(S)$ has the structure of a differential cell of real dimension $6(\mathfrak{g}-1)$ and  we have 
the well-known parametrization of $T^*(\mathcal{T}_{\mathfrak{g}}(S)),$ the cotangent bundle of 
$\mathcal{T}_{\mathfrak{g}}(S),$ given by the pairs:
$$(X,\alpha) \in \mathcal{T}_{\mathfrak{g}}(X)\times C_{2}(X),$$
see e.g. \cite{Jost} for details.
Consequently, in view of the isomorphism \eqref{C_kappa_X_isometry}, 
we derive that $T(\mathcal{T}_{\mathfrak{g}}(S))$ (the tangent bundle of $\mathcal{T}_{\mathfrak{g}}(S)$) is parametrized by the pairs: 
$$(X,[\beta]) \in \mathcal{T}_{\mathfrak{g}}(S) \times \mathcal{H}^{0,1}(X,E).$$ 

For $p\geq 1,$  we have
 the $L^{p}$-space of sections and forms valued on $E$, respectively 
as follows:
\begin{align}
&
L^{p}(X,E)
=
\{ \eta:X\longrightarrow E 
\; : \; 
\Vert  \eta \Vert_{L^{p}}:=
(\int_{X}\Vert \eta \Vert_{E}^{p}dA)^{\frac{1}{p}} < +\infty
\},
\notag
\\
&
L^{p}(A^{0,1}(X,E))
=
\{ 
\beta \in A^{0,1}(X,E)
\; : \; 
\Vert \beta \Vert_{L^{p}}
:=
(\int_{X}\Vert \beta \Vert_{E}^{p}dA)^{\frac{1}{p}} < +\infty
\},
\notag
\end{align}	
which define Banach spaces equipped with the given norm: 
$\Vert \cdot  \Vert_{L^{p}}.$ 

Also for $p\geq 1,$ we have the Sobolev space:
\begin{equation}\label{W_1_p}
W^{1,p}(X,E)
=
\{ \eta \in L^{p}(X,E)
\; : \; 
\bar{\partial} \eta \in L^{p}(A^{0,1}(X,E))
\}, 
\end{equation}
defining a Banach space equipped with the norm:
$$
\Vert \eta \Vert_{W^{1,p}}
=
\Vert \eta \Vert_{L^{p}} + \Vert \bar{\partial}\eta \Vert_{L^{p}},
\; \forall \; \eta \in W^{1,p}(X,E).
$$
Incidentally, we recall that, for the holomorphic line bundle $E=T_{X}^{1,0}$ in \eqref{E},  
the following Poincar\'e inequality holds,
\begin{equation}\label{poincare}
\Vert \eta \Vert_{L^{p}} \leq C_{p}\Vert \bar{\partial}\eta \Vert_{L^{p}},
\; \forall \; \eta \in W^{1,p}(X,E).
\end{equation}
for suitable $C_p>0$, see \cite{Huang_Lucia_Tarantello_2}.

 Furthermore, since $C_2(X)$ is finite dimensional, we have the equivalence of  all norms in $C_{2}(X),$ and it is usual
(by recalling the Weil-Patterson form \cite{Jost}) to consider the following $L^{2}$-norm:
\begin{equation}\label{norm}
\Vert \alpha \Vert_{L^{2}}
:=
(\int_{X} \langle  \alpha, \alpha\rangle dA)^{\frac{1}{2}}
\; \text{ for } \; 
\alpha \in C_{2}(X). 
\end{equation}
Indeed, it is conveniently computed with respect to a \underline{basis} in
$C_{2}(X)$ given as follows: 
\begin{equation}\label{basis}
\{s_{1},\ldots,s_N \} \subset C_{2}(X)
\; \text{ with } \; 
N=3(\mathfrak{g}-1)
:\;
\int_{X}\langle s_{j} , s_{k}\rangle dA
=
\delta_{j,k}
\end{equation}
with $\delta_{j,k}$ the Kronecker symbols. So, 
for $\alpha \in C_{2}(X)$, 
we may write:
$$\alpha=\sum_{j=1}^{\nu}b_{j}s_{j}, \;b_{j}\in \mathbb{C},
\; \text{ and } \; 
\beta_{0}=*_{E}^{-1}\alpha=\sum_{j=1}^{\nu}b_{j}*_{E}^{-1}s_{j}
$$ 
($\beta_{0}$ the associated harmonic Beltrami differential),
and we compute:
 \begin{equation*}
\Vert \alpha \Vert_{L^{2}}^{2}=\Vert \beta_{0} \Vert^{2}_{L^{2}}
=
\sum_{j=1}^{\nu}\vert b_{j} \vert^{2}.  
\end{equation*}
Clearly, any closed and bounded subsets of $C_2(X)$ (with respect to the $L^2$-norm) is compact. Thus, for example, if $\alpha_{n}\in C_2(X)$ satisfies 
$\Vert \alpha_{n} \Vert_{L^{2}}=1$ then it admits a convergent subsequence 
$\alpha_{n_{k}}\longrightarrow \alpha_{0}  \in C_{2}(X)$ 
with 
$\Vert \alpha_{0} \Vert_{L^{2}}=1$. 

\vskip 0.2cm
Next, we recall some well-known facts about divisors and some useful consequences of the Riemann-Roch theorem 
that we shall need
in the sequel, see e.g. \cite{Jost}, \cite{Miranda} and \cite{Narasimhan}.
\vskip 0.1cm
For given $\nu\in \NN$, let $X^{\nu} =X\times \cdots\times X$ be the Cartesian product of $\nu$-copies of $X$, and consider the symmetric product of $X$:
\begin{eqnarray*}
    X^{(\nu)}=X^{\nu} /\{ \text{permutaions}\},
\end{eqnarray*} which defines a smooth complex manifold of dimension $\nu,$ for the proof see Chapter 2
 page 236 of \cite{Griffiths_Harris}.

A non zero  \underline{effective divisor $D$ of degree $\nu$} on $X$ is given by the formal expression:
\begin{eqnarray}\label{divis}
    D=\sum_{j=1}^n n_j x_j,  \quad\, \text{ with } \, x_j\in X \quad \,  n_j \in \mathbb{N} \,  \quad \, \text{ and } \quad \sum_{j=1}^n n_j=\nu > 0.
\end{eqnarray} 
The support of $D$ is given by the set: $\text{ supp } D= \{x_1,\ldots, x_n\},$ (formed by \underline{distinct} point in $X$) while the positive integer $n_j\in \NN$ defines the \underline{multiplicity} of $x_j \in \text{supp }D,$ for $j=1,\ldots,n.$ 
Finally, the integer $\nu$ defines the  degree of $D$, namely: $deg (D):=\sum_{j=1}^n n_j=\nu$.

Any such divisor can be identified with the element of $X^{(\nu)}$ associated with the $\nu$-ple containing $n_j$ copies of the point $x_j$, $j=1,\ldots, n$. Hence, we shall refer to $X^{(\nu)}$ as the space of non zero  effective divisors of degree $\nu$.

Moreover, to any non trivial  $\alpha \in C_2(X)$, we can associate the so called  \underline{zero divisor} of $\alpha,$ denoted by $div(\alpha),$ as follows:
$$
div(\alpha)=\sum_{j=1}^n n_j q_j,
$$
where $\{q_1,\ldots, q_n\}$ are the \underline{distinct} zeroes of $\alpha$, and $n_j$ is the multiplicity of the zero $q_j$, $j=1,\ldots,n.$ 
From \eqref{2.03}, we have: $div (\alpha) \in X^{(4(\gg-1))}.$

\vskip0.2cm
Given $D_1\in X^{(\nu_1)}$ and $D_2\in X^{(\nu_2)}$ we say that $D_1\leq D_2$ if $\nu_1\leq \nu_2$, $\text{supp} D_1\subset \text{supp} D_2$ and the multiplicity at $x\in \text{supp} D_1$ is smaller or equal than the multiplicity of $x\in \text{supp } D_2$. 

For an effective divisor $D$ we define:
$$
    Q (D)=\{\a\in C_2(X): \text{div}(\a)\geq D\},
$$
with  the understanding that, for the trivial divisor $D=0,$ we have Q(0)=$C_2(X).$

Cleraly, \\
if $D_1\leq D_2$  then $Q(D_2) \subseteq Q(D_1)$.\\

It is a direct consequence of the Riemann-Roch theorem that,
\begin{eqnarray}\label{2.07*}
\begin{array}{l}
\text{ if }  1\leq \nu < 2(\gg-1), \\ \text{ then }    \dim_{\CC} Q(D)= 3(\gg-1)-\nu=\dim_\CC(C_2(X))-\nu. \quad \quad 
\end{array} 
\end{eqnarray}

Therefore, if $D_1\in X^{(\nu_1)}$ and $D_2\in X^{(\nu_2)}$  satisfy: $D_1\leq D_2$ and $1\leq \nu_1<\nu_2<2(\gg-1)$ then 
\begin{eqnarray}\label{2.06} 
\exists \, \a\in Q(D_1) \text{ but } \a\notin Q(D_2).
\end{eqnarray}

In particular for $q\in X$, letting $D=q$ and $Q(q):=Q(D)$  we see that,

\begin{eqnarray}\label{2.09}
Q(q) =\{ \a \in C_2(X) : \a(q)=0 \},
\end{eqnarray}
and
\begin{eqnarray}\label{2.07}
    \dim_\CC Q(q)= \dim_\CC C_2(X)-1.
\end{eqnarray}

Therefore, for all  $ q\in X \,\, \exists \,\alpha \in H^0(X,\otimes^2(K_X))$ such that $\alpha(q) \neq 0,$
namely:
$$
C_2(X) = H^0(X,\otimes^2(K_X)) \text{ is base point free.}  \quad \quad 
$$  
\vskip0.2cm
More generally, from \eqref{2.06} we can show that, if $1\leq \nu < 2(\gg-1)$ then for given distinct points $\{x_1,\ldots,x_\nu\}$ in $X$ we have:
\begin{eqnarray}\label{2.07bis}
\begin{array}{l}
\forall \, l\in \{1,...,\nu\} \,\, \exists \,\alpha_l \in C_2(X): \,\, \alpha_l(x_l) \neq 0 \,\, \text{ but }
 \alpha_l(x_j)=0 \quad \\
j \in \{1, ...,\nu \}\setminus\{ l \}.
\end{array}
\end{eqnarray}
By  \eqref{2.07bis}, we obtain in addition that, 
\begin{eqnarray}\label{2.07tris}
\begin{array}{l}
\text{if $1\leq \nu < 2(\gg-1)$ then for given distinct points} \, \{x_1,\ldots,x_\nu\} \subseteq X \\ \exists \,\alpha \in C_2(X): \,\,  \alpha(x_j) \neq 0,
\forall \,\,\,  j\in \{1, ...,\nu \}.
\end{array}
\end{eqnarray}
Indeed, in order to satisfy \eqref{2.07tris}, it suffices to take $\alpha = \sum_{l=1,...,\nu} \alpha_l$, with $\alpha_l$ in \eqref{2.07bis}.
\vskip0.4cm
By using \eqref{2.07}, we can define the \underline{Kodaira map}:
\begin{equation}\label{kodaira map} 
\tau:X\longrightarrow \mathbb{P}(\mathcal{H}^{0,1}(X,E))\simeq \mathbb{P}^{3(\mathfrak{g}-1)-1} = \mathbb{P}^{3\mathfrak{g}-4} 
\end{equation}  
 by associating to any $q \in X$ the element of
$\mathbb{P}(\mathcal{H}^{0,1}(X,E))$ identified by 
the ray of classes generated by 
$[\beta] \in \mathcal{H}^{0,1}(X,E)\setminus \{  0 \} $ with harmonic representative
$\beta_{0}\in [\beta]$ satisfying:
$$
\int_{X} \beta_{0}\wedge \alpha=0, \; \forall \; \alpha \in Q(q).
$$
Equivalently, by recalling \eqref{dual}, \eqref{2.07} and \eqref{2.09}, to any $q \in X$ the map $\tau$ will associate the ray of functionals in  
$(C_{2}(X))^{*}$ which admit the linear space $Q(q)$ as their kernel.
Such a map is holomorphic \cite{Donaldson_Book},\cite{Griffiths_Harris}, and we have:
\begin{lemma}[\cite{Tar_2}] 
The image $\tau(X)$ defines a complex curve into the projective space $\mathbb{P}(\mathcal{H}^{0,1}(X,E))$ of complex dimension
$3(\mathfrak{g}-1) -1 \geq 2$. 
\end{lemma}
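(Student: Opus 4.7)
The plan is twofold: verify the stated dimension of the ambient projective space, and show that $\tau(X)$ is a one-dimensional complex analytic subvariety. For the dimension count, I would combine \eqref{dim} with the isomorphism \eqref{C_kappa_X_isometry} to obtain $\dim_{\mathbb{C}} \mathcal{H}^{0,1}(X,E) = \dim_{\mathbb{C}} C_{2}(X) = 3(\mathfrak{g}-1)$, whence $\mathbb{P}(\mathcal{H}^{0,1}(X,E))$ has complex dimension $3(\mathfrak{g}-1)-1$, which is $\geq 2$ for every $\mathfrak{g}\geq 2$.

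For the structure of $\tau(X)$, the strategy is to show that $\tau$ is a non-constant holomorphic map defined on the compact Riemann surface $X$; its image is then an analytic subvariety of complex dimension exactly $1$, hence a complex curve. Holomorphicity is a purely local computation: fix a basis $\{s_{1},\ldots,s_{\nu}\}$ of $C_{2}(X)$ with $\nu=3(\mathfrak{g}-1)$, write $s_{j}=f_{j}(z)(dz)^{2}$ in a holomorphic chart $z$ around an arbitrary point $q_{0}\in X$, and trivialize $(T_{q}^{1,0})^{\otimes 2}$ by $(dz)^{2}|_{q}$. The evaluation functional $\alpha\mapsto \alpha(q)$ on $C_{2}(X)$ is then represented by $(f_{1}(z(q)),\ldots,f_{\nu}(z(q)))$ and its kernel is precisely $Q(q)$. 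Under the identification $\mathbb{P}(\mathcal{H}^{0,1}(X,E))\simeq \mathbb{P}(C_{2}(X)^{*})$ supplied by \eqref{C_kappa_X_isometry}, the map $\tau$ is locally given by $q\mapsto [f_{1}(z(q)):\cdots:f_{\nu}(z(q))]$, which is holomorphic thanks to the base-point-free property of $C_{2}(X)$ recorded after \eqref{2.09}: at least one of the $f_{j}$ is non-zero at any given $q$, so the projective representative is well defined.

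The crux is non-constancy. Suppose for contradiction that $\tau$ were constant, so that $Q(q)=V$ for a single codimension-one subspace $V\subset C_{2}(X)$ and every $q\in X$. Then each $\alpha \in V$ would vanish at every point of $X$, forcing $\alpha\equiv 0$ and hence $V=\{0\}$; since $\dim_{\mathbb{C}}V=3(\mathfrak{g}-1)-1\geq 2$ for $\mathfrak{g}\geq 2$, this is absurd. Equivalently, any $\alpha_{0}\in C_{2}(X)\setminus V$ would have to be nowhere vanishing, contradicting the fact \eqref{2.03} that a non-trivial holomorphic quadratic differential has $4(\mathfrak{g}-1)\geq 4$ zeros. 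No serious obstacle is anticipated in this proof; the only point of care is in rendering the Kodaira map coordinate-free, which is exactly what the base-point-free property of $C_{2}(X)$ guarantees.
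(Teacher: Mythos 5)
Your argument is correct and complete. Note that the paper does not actually prove this lemma: it is imported from \cite{Tar_2}, with holomorphicity of $\tau$ attributed to \cite{Donaldson_Book} and \cite{Griffiths_Harris}. Your three steps are exactly the standard ones and each is sound: the dimension count follows from \eqref{dim} and \eqref{C_kappa_X_isometry}; holomorphicity in local coordinates $q\mapsto [f_1(z(q)):\cdots:f_\nu(z(q))]$ is well defined precisely because $C_2(X)$ is base-point free (the observation recorded after \eqref{2.09}); and non-constancy via ``a constant $\tau$ forces a nowhere-vanishing quadratic differential, contradicting the $4(\mathfrak{g}-1)$ zeros of \eqref{2.03}'' is the right obstruction. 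The only point worth flagging is that you are implicitly invoking Remmert's proper mapping theorem to pass from ``non-constant holomorphic map from a compact Riemann surface'' to ``one-dimensional closed analytic subvariety''; this is exactly the tool the paper uses later in Section \ref{Preliminaries}, where the Kodaira map is subsumed into the more general construction $\Psi:X^{(\nu)}\to G_{N-\nu}(V)$ and the incidence variety $\Sigma_\nu$, whose $\nu=1$ case ($p_1(\Sigma_1)$ being the one-dimensional image of $\tau$, since a hyperplane determines a unique linear form up to scalars) recovers the lemma. Your direct proof is more elementary and self-contained; the paper's route buys the generalization to effective divisors of degree $\nu\leq \mathfrak{g}-1$ needed for Proposition \ref{***}.
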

Furthermore we point out the following useful "approximation" result:
\begin{lemma}\label{approssimazione}
Consider the effective divisors:  $D_k = \sum_{l=1}^{N} x_{k,l}$ and $D = \sum_{l=1}^{N} x_{l},$  with $x_{k,l} \to x_{l} $ in $ X, \,\, l =1,..., N$ and degree $N \in \{1, ... ,2 \gg -3 \}.$ 
Then for every $\alpha \in Q(D)$ there exists  $ \alpha_{k} \in Q(D_k)$ such that (up to subsequences) $ \alpha_{k} \to \alpha,$ as $k \to +  \infty. $

\end{lemma}
\begin{proof}
From \eqref{2.07*}  we know that, $dim(Q(D_k)) = dim(Q(D)) = 3(\gg-1) - N =: \nu.$ So, we let  $\{s_{1,k}, \ldots,s_{\nu,k}\}$ be an orthonormal basis for  $Q(D_k),$ i.e. 
$$ \int_X <s_{h,k},s_{j,k}>dA = \delta_{h,j} \ \  j,h = 1, \ldots,\nu.$$
Hence, along a subsequence, we can assume that,
$ s_{j,k} \to s_j,$ as $k \to + \infty$ and $$\int_X <s_h,s_j> = \delta_{h,j} \ \ j,h = 1,\ldots,\nu.$$
In particular, since $s_{j,k}(x_{l,k}) = 0,$ then by passing to the limit as $k \to + \infty,$  we find that, $s_{j}(x_l) = 0, \,\, \forall  \ l = 1 \ldots,N.$ That is, $s_j \in Q(D), \  \forall j=1, \ldots,\nu.$ 
As a consequence we derive that $\{s_1, \ldots,s_\nu\}$ defines an orthonormal basis for $Q(D).$ Hence for $\alpha \in Q(D),$ we can write: $ \alpha = \sum_{j=1}^{\nu} c_j s_j$ with $c_j \in \mathbb{C},$ and at this point it suffices to take $\alpha_k = \sum_{j=1}^{\nu} c_j s_{j,k} \in Q(D_k),$ to find  that, $\alpha_k \to \alpha,$ as $k \to + \infty.$ \end{proof}
   
\begin{remark}\label{differential}
For $ x_0 \in X$  let $ \alpha \in H^0(X, \otimes^2(K_X)))$ such that $\alpha(x_0) = 0.$ Then it is possible to define the differential $d_{x_0}(\alpha)$ of $\alpha $ in $x_0$ which is an element  of the fiber $(\otimes^3(K_X))_{x_0}.$ To this purpose, let $\alpha_0$ be a local holomorphic section of $K_X \otimes K_X$ which is nowhere zero on an open neighbourhood $U$ of $x_0.$ The function $ f := \frac{\alpha}{\alpha_0}$ is holomorphic in $U$ and we have $d_{x_0}(f)= \partial_{x_0}(f) + \overline{\partial}_{x_0}(f) = \partial_{x_0}(f) \in ({T_{x_0}^{1,0}})^* = (K_X)_{x_0}.$ 
Then we define the differential of $\alpha$ at $x_0$ as given by: $$d_{x_0}(\alpha):= \partial_{x_0}(f) \otimes \alpha_0(x_0) \in (\otimes^3(K_X))_{x_0}.$$  

Observe that this definition does not depend on the choice of the local nowhere vanishing section $\alpha_0.$ In fact let $\alpha_1(x) = e(x) \alpha_0(x)$   where $e$ is a nowhere zero holomorphic function in $U.$ Then $$d_{x_0} (\frac{\alpha}{\alpha_1}) \otimes \alpha_1(x_0) = \frac{1}{e(x_0)} d_{x_0}(\frac{\alpha}{\alpha_0}) \otimes (e(x_0) \alpha_0(x_0)) = d_{x_0} (\frac{\alpha}{\alpha_0}) \otimes \alpha_0(x_0), $$

If we have a local holomorphic chart centred at  $x_0$ then we can choose $\alpha_0 = dz^2$ and locally, by letting  $\alpha = a(z)dz^2$ with $a(0)=0,$ we find $d_{x_0} \alpha = \frac{\partial a}{\partial z}(0) dz^3_0.$

By using the Chern connection of the hyperbolic metric on $K_X^2$ it is possible to define the differential $d_{x_0} \alpha \in (K_X)_{x_0}^3$ even in case $\alpha(x_0) \neq 0$. 
In fact, in the chosen local holomorphic  coordinates  satisfying (\ref{hypcoord}) then the local expression of $d_{x_0} \alpha$ is given as above, for details see Section 12 in Chapter 5 of \cite{Demailly}). 
\end{remark}
\qquad

We have:
\begin{lemma}\label{zeros}
Let $D = \sum_{j=1}^{m} n_j x_j\,$ be an effective divisor (see \eqref{divis}) with $$0 < deg(D) =\nu < 2(\gg-1)$$ then:
$$ \begin{array}{l}
(i) \ \mbox{if $n_j = 1$ then there exists $\alpha_j^{(1)} \in Q(D - x_j)$ such that  $\alpha_j^{(1)}(x_j) \neq 0;$}  \\ 
(ii) \ \mbox{if $n_j\geq 2,$ then there exist $\alpha_j^{(1)} \in Q(D - n_j x_j)$ and  $\alpha_j^{(2)} \in Q(D - n_j x_j)$}\\    \mbox{\ such that \ $\alpha_j^{(1)}(x_j) \neq 0$  and $d_{x_j} \alpha_j^{(1)} = 0$} 
\mbox{ \ while \   $\alpha_j^{(2)}(x_j) =0$ and $d_{x_j} \alpha_j^{(2)} \neq 0.$}
\end{array}
$$
\end{lemma}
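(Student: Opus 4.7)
The key tool is the Riemann--Roch dimension formula $\dim_{\CC}Q(D')=3(\gg-1)-\deg D'$ recalled in the excerpt, which is valid for $0\leq \deg D'<2(\gg-1)$ (with $Q(0)=C_2(X)$ in the degenerate case). Since $\deg D\leq \gg-1$, every divisor that enters the argument below --- namely $D$, $D-x_j$, $D-n_jx_j$, $D-(n_j-1)x_j$ and $D-(n_j-2)x_j$ --- has degree at most $\gg-1<2(\gg-1)$, so the formula can be applied throughout.

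For part (i), since $n_j=1$ the divisor $D-x_j$ is effective of degree $\deg D-1\geq 0$ and Riemann--Roch yields $\dim Q(D-x_j)=\dim Q(D)+1$. The strict inclusion $Q(D)\subsetneq Q(D-x_j)$ then produces an $\a_j^{(1)}\in Q(D-x_j)\setminus Q(D)$: it satisfies the required vanishing conditions at every $x_k$, $k\neq j$, while by construction $\a_j^{(1)}(x_j)\neq 0$.

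For part (ii), set $V=Q(D-n_jx_j)$ and introduce the two linear functionals
\[
\ell_1(\a)=\a(x_j)\in(\otimes^2K_X)_{x_j}\cong\CC,\qquad \ell_2(\a)=d_{x_j}\a\in(\otimes^3K_X)_{x_j}\cong\CC,
\]
where $d_{x_j}$ is the intrinsic differential defined via the Chern connection in Remark \ref{differential}. In hyperbolic normal $z$-coordinates at $x_j$ satisfying \eqref{hypcoord}, any $\a\in C_2(X)$ has local expression $\a=a(z)\,dz^2$ with $a$ holomorphic, and by Remark \ref{differential} one has $\ell_1(\a)\leftrightarrow a(0)$ and $\ell_2(\a)\leftrightarrow a'(0)$. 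Consequently $\ker\ell_1=Q(D-(n_j-1)x_j)$ (vanishing at $x_j$ to order $\geq 1$) and $\ker\ell_1\cap\ker\ell_2=Q(D-(n_j-2)x_j)$ (vanishing at $x_j$ to order $\geq 2$), both sitting naturally inside $V$.

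Applying Riemann--Roch to these three nested spaces one obtains the codimension-one relations $\dim V=\dim\ker\ell_1+1=\dim(\ker\ell_1\cap\ker\ell_2)+2$, so the joint map $(\ell_1,\ell_2):V\to\CC^2$ is surjective. Pulling back the basis vectors $(1,0)$ and $(0,1)$ yields elements $\a_j^{(1)},\a_j^{(2)}\in V$ satisfying $\a_j^{(1)}(x_j)\neq 0$, $d_{x_j}\a_j^{(1)}=0$ and $\a_j^{(2)}(x_j)=0$, $d_{x_j}\a_j^{(2)}\neq 0$, as required. The only non-routine point is the identification $\ker\ell_1\cap\ker\ell_2=Q(D-(n_j-2)x_j)$, which rests on matching the intrinsic $d_{x_j}$ of Remark \ref{differential} with the first Taylor coefficient $a'(0)$ in the hyperbolic normal coordinates; once this is granted, the remainder is a clean two-step filtration dimension count.
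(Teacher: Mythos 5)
Your proposal is correct and follows essentially the same route as the paper: both arguments rest on the strict dimension drops $\dim Q(D-n_jx_j)>\dim Q(D-(n_j-1)x_j)>\dim Q(D-(n_j-2)x_j)$ coming from the Riemann--Roch count \eqref{2.06}. The only (cosmetic) difference is in part (ii), where you phrase the conclusion as surjectivity of the joint map $(\ell_1,\ell_2)$, while the paper makes the same linear algebra explicit by taking $\alpha_1\in Q(D-n_jx_j)\setminus Q(D-(n_j-1)x_j)$, $\alpha_2\in Q(D-(n_j-1)x_j)\setminus Q(D-(n_j-2)x_j)$ and correcting $\alpha_j^{(1)}=\alpha_1-\bigl(\tfrac{d_{x_j}\alpha_1}{d_{x_j}\alpha_2}\bigr)\alpha_2$.
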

The definition of $d_{x_j}$  is given in Remark \ref{differential} above.\\ 
\begin{proof}
To prove  (i) we observe that since $deg(D) < 2( \gg-1),$ then we can use (\ref{2.06}) to find that, $ dim ( Q(D - x_j) ) > dim ( Q(D) )$. Consequently, there exists $\alpha_j \in Q(D - x_j) \setminus Q(D),$ and so necessarily: $\alpha_j(x_j) \neq 0.$ Hence, in this case we simply set: $\alpha_j^{(1)} = \alpha_j.$ Similarly to prove (ii) we observe that   
 $dim(Q(D - n_j x_j)) > dim(Q(D - (n_j-1) x_j)) > dim(Q(D -(n_j-2) x_j )),$ so there exists $\alpha_1 \in C_2(X)$ such that $\alpha_1 \in Q(D - n_j x_j) \setminus Q(D- (n_j-1) x_j) $ i.e. $\alpha_1(x_j) \neq 0;$ and
$\alpha_2 \in Q(D - (n_j-1) x_j) \setminus Q(D - (n_j-2) x_j)$ i.e. $\alpha_2(x_j) = 0$ and  $d_{x_j} \alpha_2\neq 0.$ 
So in this case  we let $\alpha_j^{(2)} = \alpha_2$ and 
$\alpha_j^{(1)} = \alpha_1 - \left(  \frac{d_{x_j} \alpha_1}{d_{x_j} \alpha_2} \right)  \alpha_2.$    
\end{proof}

Lemma \ref{zeros} will be very useful in the sequel, since it implies the following decomposition property: 
\begin {equation*}
\mbox{ if } D = \sum_{j=1}^{m} n_j x_j \mbox{with}  \,\,\, 1\leq n_j\leq 2, \,\,\,\,\, j=1,...,m; \,\, \,\,\mbox{and} \,\,0< degD < 2(\gg-1),
\end{equation*}
then
\begin {equation}\label{decomposition2}  
 C_2(X) =Q(D) \oplus Y, \,\,\mbox{with} \,  Y= span\{ \alpha_j^{(1)} \,\, \text{for} \,\,\,  j=1,...,m \} \oplus  \{ \alpha_j^{(2)} \,\,\text{for} \,\,\, j : n_j =2 \}
\end{equation}

Indeed, the holomorphic quadratic differentials of Lemma \ref{zeros} are linearly independent, so that dim Y= $\nu $. Furthermore, for any $\alpha \in C_2(X),$ by using the usual local holomorphic z-coordinates around the point $x_j$ (centered at the origin)  we may write:
\begin {equation}\label{local}
\alpha =a_j(z) dz^2, \,\,\, \alpha_j^{(1)}=a_{j}^{(1)}(z) dz^2, \,\,\,\, \alpha_j^{(2)}=a_{j}^{(2)}(z) dz^2;
\end{equation}
where $a_j(z), \,\,\, a_{j}^{(1)}(z) \,\,\, \text{ and } \,\, a_{j}^{(2)}(z)$ are holomorphic functions around the origin, with $a_{j}^{(1)}(0)\neq 0$ and $(a_{j}^{(1)})'(0)= 0,$ while $a_{j}^{(2)}(0)= 0$ and  $(a_{j}^{(2)})'(0)\neq 0.$
We let:
\begin {equation}\label{lambda}
\lambda^{(1)}_j= \frac{a_j(0)}{a_{j}^{(1)}(0)} \,\, \text{for} \,\,\,  j=1,...,m,\,\,\quad \lambda^{(2)}_j= \frac{a'_j(0)}{(a_{j}^{(2))})'(0)} \,\, \text{for} \,\,\, j: n_j=2.
\end {equation} 
and notice that such values are uniquely identified independently of the local chart considered around the point $x_j.$  Moreover:
$$\alpha - ( \sum_{j=1}^{m}\lambda^{(1)}_j \alpha_j^{(1)} + \sum_{j: n_j=2}\lambda^{(2)}_j \alpha_j^{(2)})\in Q(D).$$
Hence,
\begin {equation}\label{decom}
\alpha = \sum_{j=1}^{m}\lambda^{(1)}_j \alpha_j^{(1)} + \sum_{j: n_j=2}\lambda^{(2)}_j \alpha_j^{(2)} + \hat{\alpha}
\end {equation}
with unique $\hat{\alpha} \in Q(D).$
\vskip0.2cm

Finally,  in Appendix 3 we prove a nice geometric description about the projectivization of all  "dual" decompositions of the type  \eqref{decomposition2}, for any divisor $D.$ The following holds:

\begin{proposition}{\label{***}}
       For any $\nu\in \{1,\cdots, \gg-1\}$ let  $\tilde \S_\nu\subset \PP(\H^{0,1}(X,E))$ be the $\nu$-secant variety of $\tau(X)$ (cf. \cite{ACGH}) we have:
     \begin{eqnarray}\label{1}
      \tilde \S_1= \tau(X)\subset \tilde \S_2\subset \cdots \tilde\S_{\gg-1},~~ \text{dim}(\tilde \S_\nu) \leq 2\nu-1,
     \end{eqnarray}($\tau(X)$ the image of the Kodaira map in \eqref{kodaira map}) and 
     \begin{eqnarray}\label{2}
     [\b]_\PP \in \tilde \S_\nu \longleftrightarrow    \exists \text{ divisor } D\in X^{(\nu)}: \int_X \b\wedge \a =\int_X \b_0 \wedge \a =0 
     \end{eqnarray} $\forall \a\in Q(D),$  ($\b_0 \in [\b]$ the associated harmonic Beltrami differential). 
\end{proposition}

\section{The Donaldson functional and the statement of the Main Results.}\label{statements_main_results}
For now on we fix the 
pair: $(X,[\beta]) \in \mathcal{T}_{\mathfrak{g}}(S) \times \mathcal{H}^{0,1}(X,E),$ with 
$\beta_{0} \in [ \beta ]$ the harmonic representative of the class $[\beta].$\\ 

We shall be looking for a solution pairs $(u,\a)$ of \eqref{0.6}, \eqref{0.8} subject to the constraint:
$*_E^{-1}(e^{-u}\a) \in [\b].$
Equivalently, we ask the pull back metric $g$ on $X$ and the $(2,0)$-part $\a$ of the second fundamental form $II$ to satisfy:
$$
g=e^u g_X\quad \text{ and } \, \a=e^u*_E(\b_0+\bar\p \eta), \text{ with suitable }\eta \in A^0(E).
$$
Correspondingly, we can formulate the ``constraint" Gauss-Codazzi equations (with respect to the given pair $(X,[\beta])$)  in terms of $(u,\eta)$ as follows: 
\begin{equation}\label{system_of_equations_intro}
\left\{
\begin{matrix*}[l]
\Delta u +2 -2te^{u} -8e^{u}\Vert \beta_{0}+\overline{\partial}\eta \Vert^{2} =0  &  \;\text{ in }\;  &  X,  \\
\overline{\partial}(e^{u}*_{E}(\beta_{0}+\overline{\partial}\eta))=0, &  \;\text{}\;  &   \\ 
\end{matrix*}
\right.
\end{equation} with $t=1-c^2$.

Incidentally, observe that the Beltrami differential 
$\beta_{0}+\bar{\partial}\eta \in [\beta]$ satisfying the second equation in \eqref{system_of_equations_intro}  is harmonic with respect to the metric 
$h=e^{\frac{u}2}g_{X}$. 

With this point of view, it can been shown that the system \eqref{system_of_equations_intro}
can be formulated in terms of  Hitchin self-duality equations \cite{Hitchin}
for a suitable nilpotent $SL(2,\mathbb{C})$ Higgs bundle, 
we refer to \cite{Alessandrini_Li_Sanders}, \cite{Huang_Lucia_Tarantello_2}
for details and also we mention \cite{Li}  for related issue concerning  minimal immersions.

On the other hand, it is easy to check that solutions of \eqref{system_of_equations_intro} correspond to critical points of the following \underline{Donaldson functional} (in the terminology of \cite{Goncalves_Uhlenbeck})
\begin{equation}\label{F_t}
F_{t}(u,\eta)
= 
\int_{X}
\left(\frac{\vert \nabla u \vert^{2}}{4}
-
u
+
te^{u}
+
4e^{ u}\Vert \beta_{0} + \overline{\partial} \eta \Vert^{2}
\right)
\,dA
, 
\end{equation}
$t\in \R$, with ``natural" (convex) domain:
$$
\begin{array}{l}
\Lambda
=
\left\{  
(u,\eta)\in H^{1}(X) \times W^{1,2}(X,E)
\; : \; 
\int_{X}e^{u}
\Vert \beta_{0} + \bar{\partial}\eta \Vert^{2}\,dA < \infty
\right\}, 
\end{array}
$$
where $H^{1}(X)$ is the usual Sobolev spaces of function of $X$
and $W^{1,2}(X,E)$ is the Sobolev space of sections of $E$ (see \eqref{W_1_p}).  

We refer to \cite{Tar_2} for a detailed discussion about the Gateaux differentiability  of $F_t$ along "smooth" directions and the corresponding notion of "weak" critical point and relative regularity.

For $t>0$ the functional $F_{t}$ is clearly bounded from below in $\Lambda$, however this may no longer be the case for $t\leq0$.

Firstly anticipated in \cite{Goncalves_Uhlenbeck} and then established rigorously in \cite{Huang_Lucia_Tarantello_2},  we have that, for $t>0$ the Donaldson functional  $F_{t}$ admits a unique (smooth) critical point $(u_t, \eta_t)$ corresponding to its global minimum in $\Lambda$. 
Clearly, Theorem \ref{thm_A}  is a direct consequence of this fact. 
Moreover, as discussed in \cite{Huang_Lucia_Tarantello_2} and \cite{Tar_2}, from Theorem \ref{thm_A} we can deduce useful algebraic consequences about the parametrization of  all possible irreducible representations:
$$\rho:\pi_{1}(S)\longrightarrow PSL(2,\mathbb{C})$$ with
$PSL(2,\mathbb{C})$ the
(orientation preserving) isometry group of 
$\mathbb{H}^{3},$ 

see also \cite{Uhlenbeck}, \cite{Loftin_Macintosh_1}, \cite{Loftin_Macintosh_2} and \cite{Taubes} for more applications in this direction and related issues.

On the other hand, to find (CMC) c-immersions of $X$ with  
$|c| \geq 1$,
we need to investigate the existence of critical points for the functional $F_{t}$ when we take $t \leq 0$.  
This is a delicate task even for $t=0$, as the functional:
$$
F_{0}(u,\eta)
=
\int_{X}
\left(
\frac{1}{4}\vert \nabla u \vert^{2}
-
u
+
4e^{u}\Vert \beta_{0} + \overline{\partial} \eta \Vert^{2}
\right)
\,dA,
$$
may not admit any critical point in $\Lambda,$ in fact it can happen that the system 
\eqref{system_of_equations_intro} admits no solutions for $t=0.$

This is indeed the case, if we take $[\beta]=0$ (i.e. $\beta_{0}=0$) 
where we  find:
$u_{t}=\ln \frac{1}{t}\rightarrow +\infty$, $\eta_{t}=0$ and
$F_{t}(u_{t},\eta_{t})\rightarrow -\infty$, 
as $t\rightarrow 0^{+},$ 
and in fact \eqref{system_of_equations_intro} admits no solutions for $t=0$ and
$\beta_{0}=0$.

More generally, from \cite{Tar_2} we know about the continuous dependence of the pair $(u_t,\eta_t)$ with respect to the parameter $t\in(0,+\infty)$, and the existence and uniqueness of a (smooth) critical point for $F_0$ is actually equivalent to the continuous extension of $(u_t,\eta_t)$ at $t=0,$ as the following holds (see Theorem 8 in \cite{Tar_2}):
\begin{thmx}[Theorem 8 \cite{Tar_2}]\label{thmprimo}
If $(u_{0},\eta_{0})$ is a solution for the system
\eqref{system_of_equations_intro} with $t=0$, then
\begin{enumerate}[label=(\roman*)]
\item $(u_{t},\eta_{t})\rightarrow (u_{0},\eta_{0})$ uniformly in $C^{\infty}(X), $
as $t\rightarrow 0^{+}$;
\item
$F_{0}$ is bounded from below in $\Lambda$ and attains its global minimum at $(u_{0},\eta_{0})$ which defines its only critical point.\\ 
Hence,
$(u_{0},\eta_{0})$ is the \underline{only} solution of \eqref{system_of_equations_intro} with $t=0.$

\end{enumerate}
\end{thmx}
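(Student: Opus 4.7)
The plan is to exploit the monotonicity identity $F_{t}=F_{0}+t\int_{X}e^{u}\,dA$ to bracket the minimizers $(u_{t},\eta_{t})$. Since $(u_{0},\eta_{0})\in \Lambda$ is an admissible competitor for every $t>0$, we immediately get
$$F_{0}(u_{t},\eta_{t}) \leq F_{t}(u_{t},\eta_{t}) \leq F_{t}(u_{0},\eta_{0}) = F_{0}(u_{0},\eta_{0}) + t\int_{X}e^{u_{0}}\,dA,$$
with the right-hand side finite since $u_{0}$ is smooth on a compact surface. This yields $\limsup_{t\to 0^{+}}F_{0}(u_{t},\eta_{t}) \leq F_{0}(u_{0},\eta_{0})$.

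Next I would extract uniform estimates. Integrating the first equation of \eqref{system_of_equations_intro} along $(u_{t},\eta_{t})$ gives
$$\int_{X}e^{u_{t}}\Vert \beta_{0}+\overline{\partial}\eta_{t}\Vert^{2}\,dA = \frac{\mathrm{Vol}(X)-t\int_{X}e^{u_{t}}\,dA}{4} \leq \frac{\mathrm{Vol}(X)}{4},$$
so the last piece of $F_{0}$ is controlled. Combined with the upper bound above, this gives a uniform upper bound on $\int_{X}\!\bigl(\tfrac14|\nabla u_{t}|^{2}-u_{t}\bigr)dA$. A Moser--Trudinger-type argument, Poincar\'e-type estimates for sections (as in \cite{Huang_Lucia_Tarantello_2}), and the observation that $[\beta]\neq 0$ (otherwise no $t=0$ solution exists, by integrating the first equation and invoking Gauss--Bonnet) together upgrade these to uniform bounds on $u_{t}$ in $H^{1}\cap L^{\infty}$ and on $\eta_{t}$ in $W^{1,p}$; elliptic bootstrapping via the two equations in \eqref{system_of_equations_intro} then yields uniform $C^{\infty}$ estimates. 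The \emph{existence} of the solution $(u_{0},\eta_{0})$ at $t=0$ is precisely what rules out blow-up in this limit.

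By compactness, any subsequence admits a further $C^{\infty}$-limit $(u^{*},\eta^{*})$ which, by passage to the limit in \eqref{system_of_equations_intro}, solves the system at $t=0$. The heart of the matter, and the step I expect to be the main obstacle, is \emph{uniqueness} of solutions at $t=0$: with the coercive term $te^{u}$ gone, one can no longer rely on the strict convexity of $F_{t}$ along straight segments in $\Lambda$. I would attack this by constructing a suitable non-linear interpolation between two hypothetical solutions, analogous to the Donaldson-type geodesics used in \cite{Huang_Lucia_Tarantello_2} to prove uniqueness of the critical point of $F_{t}$ for $t>0$, and showing that $F_{0}$ remains \emph{strictly} convex along such paths joining distinct critical points; since both endpoints are critical, the strict inequality would force them to coincide. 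Once uniqueness is established, $(u^{*},\eta^{*})=(u_{0},\eta_{0})$ independently of the chosen subsequence, so the full net converges in $C^{\infty}(X)$, proving (i).

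For (ii), given (i), pick any $(u,\eta)\in\Lambda$. For all $t>0$ one has $F_{t}(u_{t},\eta_{t})\leq F_{t}(u,\eta) = F_{0}(u,\eta)+t\int_{X}e^{u}\,dA$. The left-hand side converges to $F_{0}(u_{0},\eta_{0})$ by (i) and continuity, while the right-hand side converges to $F_{0}(u,\eta)$. Hence $F_{0}(u_{0},\eta_{0})\leq F_{0}(u,\eta)$, which simultaneously shows that $F_{0}$ is bounded from below on $\Lambda$ and that $(u_{0},\eta_{0})$ is a global minimum; the fact that it is the \emph{only} critical point is then a consequence of the uniqueness statement already established for solutions of \eqref{system_of_equations_intro} at $t=0$.
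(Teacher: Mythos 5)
A preliminary remark: the paper does not prove Theorem \ref{thmprimo}; it imports it verbatim as Theorem 8 of \cite{Tar_2}, so the comparison below is with the argument of \cite{Tar_2} and with the identities the present paper records around it.

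The genuine gap is in your compactness step for (i). Write $u_t=w_t+d_t$ with $\int_X w_t\,dA=0$. Integrating the first equation of \eqref{system_of_equations_intro} gives the identity $t\int_Xe^{u_t}dA+4\int_Xe^{u_t}\Vert\beta_0+\overline\partial\eta_t\Vert^2dA=4\pi(\mathfrak g-1)$, so your upper bound on $F_0(u_t,\eta_t)$ is exactly equivalent to
\begin{equation*}
\frac14\int_X\vert\nabla w_t\vert^2\,dA-4\pi(\mathfrak g-1)\,d_t\leq C.
\end{equation*}
This controls neither $d_t$ from above nor $\Vert\nabla w_t\Vert_{L^2}$: it is perfectly compatible with $d_t\to+\infty$ and $\frac14\Vert\nabla w_t\Vert_{L^2}^2\approx 4\pi(\mathfrak g-1)d_t$, which is precisely the blow-up regime (compare the expansion of $c_k=F_{t_k}(u_k,\eta_k)$ recorded in the proof of the lemma containing \eqref{betawedge}, where blow-up forces $d_k\to+\infty$). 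The problem sits exactly at the Moser--Trudinger threshold --- this is why $F_0$ fails to be coercive and why blow-up genuinely occurs for suitable $[\beta]$ --- so no combination of Moser--Trudinger, Poincar\'e and bootstrap can upgrade your bound; and the sentence ``the existence of $(u_0,\eta_0)$ is precisely what rules out blow-up'' is the assertion to be proved, not an argument.

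The mechanism of \cite{Tar_2} is quantitative and forces the logical order (ii) before (i), opposite to yours. One first proves --- and this is exactly the convexity step you leave as a plan --- that any critical point $(u_0,\eta_0)$ of $F_0$ is its unique critical point and its global minimum, via the convexity of the Donaldson functional along the special paths of \cite{Huang_Lucia_Tarantello_2} (which persists at $t=0$, since only the manifestly convex term $te^u$ has been dropped). With minimality in hand, the two-sided comparison
\begin{equation*}
F_0(u_0,\eta_0)+t\int_Xe^{u_0}dA=F_t(u_0,\eta_0)\geq F_t(u_t,\eta_t)=F_0(u_t,\eta_t)+t\int_Xe^{u_t}dA\geq F_0(u_0,\eta_0)+t\int_Xe^{u_t}dA
\end{equation*}
yields $\int_Xe^{u_t}dA\leq\int_Xe^{u_0}dA$, hence $e^{d_t}\leq C$ by Jensen and then $\Vert\nabla w_t\Vert_{L^2}\leq C$ from the displayed identity; only at this point do elliptic estimates and the already-established uniqueness at $t=0$ give the full $C^\infty$ convergence of (i). Your derivation of (ii) from (i) is correct as far as it goes, but as structured the overall argument is circular, since (i) cannot be obtained without the minimality statement contained in (ii).
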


Therefore, to identify possible critical points for $F_{0}$, 
we must investigate  when the pair $(u_{t},\eta_{t})$ 
survives the passage to the limit, as $t\to 0^{+}$.  

To this purpose, for fixed $[\beta] \in \mathcal{H}^{0,1}(X,E) \setminus \{ 0 \} $
with harmonic representative $\beta_{0} \in [\beta]\neq 0$
we set:
$$
\beta_{t}=\beta_{0}+\overline{\partial}\eta_{t} \in A^{0,1}(X,E)
\; \text{ and } \; 
\alpha_{t}=e^{ u_{t}}*_{E}\beta_{t} \in C_{2}(X) \setminus \{ 0 \}.
$$

For $t>0$, we let $D_t=\text{div}(\a_t)$ be the zero divisor of $\a_t$ 
and let $Z_{t}$ be its support. Namely, $Z_t$ is the finite set of \underline{distinct} zeroes of $\alpha_{t}$, 
whose multiplicities adds up to $4(\mathfrak{g}-1)$ (see \eqref{2.03}). 
In terms of the fiberwise norm, for $\alpha_{t}$ we have:
$$\Vert \alpha_{t} \Vert(q)=\Vert \alpha_{t} \Vert_{E^{*}}(q)>0,
\; \forall \; q\in X\setminus Z_{t}.$$ 

Moreover we let,
$$
s_{t}\in \R \; : \; 
e^{ s_{t}}
=
\Vert \alpha_{t} \Vert_{L^{2}}^{2}
\; \text{ and } \; 
\hat{\alpha}_t
=
\frac{\alpha_{t}}{\Vert \alpha_{t} \Vert_{L^{2}}}
=
e^{-\frac{s_{t}}{2}}\alpha_{t},
$$
where $\Vert \alpha \Vert_{L^{2}}$ is the $L^{2}$-norm of $\alpha_t\in C_{2}(X)$ (see \eqref{norm}) and we have: $\text{div}(\widehat\a_t)=\text{div}(\a_t)=D_t$.

In order to attain an accurate asymptotic description 
about the behaviour of $(u_{t},\eta_{t})$, 
as $t\rightarrow 0^{+}$, 
we shall need to account for possible blow-up phenomena (cf. \cite{Brezis_Merle}) for
\begin{equation}\label{u_to_xi}
\xi_{t}:=-u_{t}+s_{t},
\end{equation} satisfying the Liouville-type equation:
\begin{eqnarray}\label{1.19a}
-\Delta \xi_t=8\|\widehat\a_t\|^2 e^{\xi_t} -f_t \text{ in } X,
\end{eqnarray}with $f_t= 2(1-t e^{u_t})$ satisfying: $0\leq f_t \leq 2$ in $X$.
\vskip0.5cm

We can combine the blow-up information in Theorem 3 of \cite{Tar_1} and Theorem \ref{thmprimo} above, to conclude the following facts about $\xi_t$ as $t\to 0^+$: 
\vskip 0.5cm
FACT 1. If $\xi_t$ does not blow up (i.e $\limsup_{t \to 0^+}max_{X} \ \xi_t < + \infty$)  then $(u_{t},\eta_{t})\rightarrow (u_{0},\eta_{0})$ uniformly in $C^{\infty}(X),$
as $t\rightarrow 0^{+}$; and $(u_{0},\eta_{0})$ is the unique critical point of $F_0$ in $\Lambda$ corresponding to its global minimum point.
\vskip 0.5cm
FACT 2. If $\xi_t$ does blow up (i.e. $\limsup_{t \to 0^+} max_{X} \ \xi_t = + \infty, $) then\\ $\liminf_{t \to 0^+} max_{X} \ \xi_t = + \infty, $ and along \underline{any} sequence $t_k \to 0^+,$ we have that    
 $ \xi_k := \xi_{t_k}$ admits a finite set $\mathcal{S}$ of blow up points, (depending possibly on the sequence  $t_k$) such that the following holds for any \underline{blow-up point} $x \in \mathcal{S}:$ 
 $$\lim_{k \to + \infty}   \ max_{B(x;r)} \ \xi_k = + \infty, \, \, \text{ \  for all small \ } r > 0;$$   
the \underline{ blow-up mass} at $x$ is defined by: 
\begin{equation}\label{sigma_q} 
\sigma(x)
:=
\lim_{r \to 0^{+} }
\left(
\lim_{k \to +\infty }8 \int_{B(x;r)} \|\widehat\a_{t_{k}}\|^2 e^{\xi_k}dA\right) 
\end{equation}
and it satisfies the following \underline {quantization} property: 
$$
\sigma(x)\in  8\pi \mathbb{N};
$$
see \cite{Tar_1}, \cite{Tar_2} and \cite{Tar_3} for details.

Thus, we need to understand when "blow-up" occurs and see when it is possible to rule it out. 

For this purpose we introduce the following notion of "blow-up" divisor:
\begin{definition} \label{blowupdiv}

If $\xi_{k}$ blows-up with (non-empty) blow up set $\mathcal{S}$ then  we define the \underline{blow-up divisor} of $\xi_{k}$ the formal sum:
$$ D=\sum_{ x \in \mathcal{S}} n_x x \in X^{(\nu)} \mbox{ \  with \ } n_x=\frac 1 {8\pi} \sigma(x)\in   \mathbb{N}, $$
with  supp $D=\mathcal{S}$ and 
$deg(D) =  \sum_{ x \in \mathcal{S}} n_x =\nu.$
\end{definition}
Since after integration of \eqref{1.19a} we have:
$$
 8\int_X \|\widehat\a_{t_{k}}\|^2 e^{\xi_k}dA
=   8 \int_X \e^{u_{t_{k}}}
\Vert \beta_{0} + \overline{\partial}\eta_{t_{k}} \Vert^{2}dA = 
 8\pi(\gg-1) - 2t_{k} \int_X \e^{u_{t_{k}}}dA
$$ 
we find:

\begin{equation}\label{degblowup}   
\nu=deg(D) =  \sum_{ x \in \mathcal{S}} n_x \in \{1,...,\gg-1\}
\end{equation} 
that is, any blow-up divisor belongs to $X^{\nu}$ with $\nu\in \{1,...,\gg-1\}.$

One of the main contribution of \cite{Tar_2} concerns the case of blow-up with minimal blow-up mass $8\pi.$ Namely when the corresponding blow-up divisor is formed by (blow-up) points with multiplicity one.
This is always the case for genus $\gg=2,$ where by (\ref{degblowup}), we know that necessarily the corresponding blow-up divisor is formed by a single point of multiplicity one.  

The following was established in \cite{Tar_2} : 
\begin{thmx} [\cite{Tar_2}]\label{thmB.3} 
Suppose that $\xi_{k}$ in \eqref{u_to_xi} admits a (non-empty) blow-up set $\mathcal S$
with the corresponding blow up divisor satisfying: $$D_1 = \sum_{x \in \mathcal{S}} x.$$ 

Then there exist a divisor $ 0 \neq \wt D \leq D_1 $ such that,  
\begin{equation} \int_{X} \beta_{0} \wedge \alpha = 0, 
\; \forall \;   \alpha \in Q(\wt D). \label{orthod2} \end{equation}
In particular if the genus $\gg=2$ then $ \mathcal{S} =\{ x \},$ $\wt D = D_1 = x$ and $[\b]_\PP=\tau(x)\, \in \tau(X).$
\end{thmx}

Recall that $[\b]_\PP$ is defined in \eqref{betap}, and indeed when $\wt D = D_1 = x$ then by \eqref{kodaira map}, the orthogonality condition \eqref{orthod2} just states that $[\b]_\PP$ belongs to the image of the Kodaira map at $x$.\\

Notice that Theorem \ref{thmB.3} is a direct consequence of Theorem 9  and Theorem 10 in \cite{Tar_2}.

Our main contribution will be to establish the following extension of  Theorem \ref{thmB.3}: 
\begin{thm}\label{thm3}
    Suppose that $\xi_{k}$ in \eqref{u_to_xi} admits (non-empty) blow-up set $\mathcal S$ with corresponding
 blow up divisor: $D_2=\sum_{x\in \mathcal{S}}  n_x x.$
If 
    \begin{eqnarray}
        \label{1.0*} 
 n_x\in \{1, 2\} \,\,\,\,\forall x\in\mathcal{S},
    \end{eqnarray}then there exists a divisor  $ 0 \neq \wt D \leq D_2$ such that,
    \begin{eqnarray}\label{1.0**} \int_X \b_0\wedge \a =0 ,\,\,\forall \a \in Q(\wt D).
    \end{eqnarray}
    
\end{thm}
It is reasonable to expect that the conclusion \eqref{1.0**} holds without the assumption \eqref{1.0*}. In fact, Theorem \ref{thm3} provides the first non-trivial step that could allow one (inductively and by further scaling) to attain \eqref{1.0**} in the general case.\\

Since the pairs $(u_t,\eta_t)$ minimize the functional $F_t$, one would expect also that the blow-up set should consist always of only one point possibly located away from the zero set of $\widehat\alpha_0$ := $\lim_{k \to +\infty } \widehat\a_{t_k}.$ 

This would provide a dramatic simplification, since when the blow-up set $\mathcal{S} $ does not contain any zeroes of $\widehat{\alpha_0},$ then the blow-up divisor of $\xi_k$ is given exactly as in Theorem \ref{thmB.3} and one could profit directly from the very accurate blow-up analysis available in \cite{Brezis_Merle}, \cite{Li_Harnack}, 
 \cite{Li_Shafrir}, \cite{Chen_Lin_1} and \cite{Chen_Lin_2}.\\
Even when the blow-up set $\mathcal{S}$ contains a zero of $\widehat\a_0$ which is not a  limit of  \underline{distinct} zeros  of  $\widehat\a_t,$ to proceed towards \eqref{1.0**}, one could still rely on the asymptotic analysis established in \cite{Chen_Lin_3}, \cite{Chen_Lin_4}, \cite{Bar_Tar_JDE},  \cite{Kuo_Lin}, and more recently in \cite{Wei_Zhang_1}, \cite{Wei_Zhang_2} and \cite{Wei_Zhang_3}.

However here we must deal with a new and 
most delicate  blow-up  situation which occurs when the blow-up set $\mathcal{S}$ contains a zero of $\widehat\a_0$ where several different zeroes of $\widehat\a_t$ accumulate, as $t \to 0^+$. We shall refer to this situation as the blow-up at "collapsing" zeroes. Below we describe the new analytical difficulties one needs to resolve in order to attain a grasp for the corresponding blow-up behaviour (see  \cite{Lin_Tarantello}, \cite{Lee_Lin_Tarantello_Yang} and \cite{Tar_1}) yielding to Theorem \ref{thm3}.

\bigskip

In view of Theorem 6 in \cite{Tar_2} the following holds:

\begin{thmx}[\cite{Tar_2}]\label{thmB.1}   
If $\gg=2,$ then for every $[\b]\in \H^{0,1}(X,E)\setminus\{0\}$ the functional $F_0$ is bounded from below in $\Lambda$. 
Moreover, if  $[\b]_\PP\notin \tau(X)$ then $F_0$ attains its infimum in $\Lambda$ at a (smooth) minimum point which is its only critical point.  
\end{thmx}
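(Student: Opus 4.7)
The plan is to combine the existence theory for $t>0$ (Theorem \ref{thm_A}), the continuity of the family $(u_t,\eta_t)$ at $t=0^+$ in the absence of blow-up (Theorem \ref{thmprimo}), and the blow-up obstruction for genus two (Theorem \ref{thmB.3}). For each $t>0$, let $(u_t,\eta_t)$ denote the unique minimizer of $F_t$ on $\Lambda$ from Theorem \ref{thm_A}, and set $m(t):=F_t(u_t,\eta_t)=\inf_\Lambda F_t$. First I would integrate the first equation in \eqref{system_of_equations_intro} over $X$ and invoke Gauss-Bonnet to obtain the a priori identity
$$t\int_X e^{u_t}\,dA+4\int_X e^{u_t}\|\beta_0+\bar\partial\eta_t\|^2\,dA=4\pi(\gg-1),$$
which yields $t\int_X e^{u_t}\,dA\leq 4\pi(\gg-1)$ and $\int_X e^{u_t}\|\beta_0+\bar\partial\eta_t\|^2\,dA\leq\pi(\gg-1)$. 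Substituting into $F_t(u_t,\eta_t)$ gives the simplified expression $m(t)=\int_X |\nabla u_t|^2/4\,dA-\int_X u_t\,dA+4\pi(\gg-1)$.

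For the boundedness from below of $F_0$ for an arbitrary $[\beta]\neq 0$, the minimality of $(u_t,\eta_t)$ implies, for every $(u,\eta)\in\Lambda$,
$$F_0(u,\eta)=F_t(u,\eta)-t\int_X e^u\,dA\geq m(t)-t\int_X e^u\,dA,$$
so letting $t\to 0^+$ produces $F_0(u,\eta)\geq\liminf_{t\to 0^+}m(t)$. Boundedness from below of $F_0$ is thus reduced to a uniform lower bound on $m(t)$. I would establish such a bound through a Moser-Trudinger inequality on $(X,g_X)$ combined with the crucial observation that the harmonic component of $\beta_0+\bar\partial\eta_t$ is the non-zero $\beta_0$ itself, so that $\|\beta_0+\bar\partial\eta_t\|_{L^2}^2\geq\|\beta_0\|_{L^2}^2>0$; this prevents the weighted $L^1$-norm $\int_X e^{u_t}\|\beta_0+\bar\partial\eta_t\|^2\,dA$ from collapsing and, together with the constraint above, controls $-\int_X u_t\,dA$ from above.

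For the existence of a global minimum under the hypothesis $[\beta]_\PP\notin\tau(X)$, I would analyze $\xi_t:=-u_t+s_t$ as $t\to 0^+$. By the dichotomy recalled as FACT 1 and FACT 2, either $\xi_t$ stays uniformly bounded above, so that $(u_t,\eta_t)\to(u_0,\eta_0)$ in $C^\infty(X)$ and $(u_0,\eta_0)$ solves \eqref{system_of_equations_intro} at $t=0$, or along some subsequence $t_k\to 0^+$ the field $\xi_{t_k}$ blows up on a non-empty set $\mathcal S$ with blow-up divisor $D\in X^{(\nu)}$, $1\leq\nu\leq\gg-1$. For $\gg=2$ one must have $\nu=1$, so $\mathcal S=\{q\}$ with multiplicity one, and Theorem \ref{thmB.3} forces $[\beta]_\PP=\tau(q)\in\tau(X)$, contradicting the hypothesis. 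Thus blow-up is excluded, and by Theorem \ref{thmprimo}(ii) the smooth solution $(u_0,\eta_0)$ is the unique critical point of $F_0$ in $\Lambda$ and attains its global minimum.

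The main obstacle is the uniform lower bound on $m(t)$: a priori the weight $e^{u_t}$ can concentrate precisely at a zero of $\beta_0+\bar\partial\eta_t$, threatening to send $\int_X e^{u_t}\|\beta_0+\bar\partial\eta_t\|^2\,dA\to 0$ and $-\int_X u_t\,dA\to +\infty$. Controlling this scenario requires exploiting both the constraint $\int_X e^{u_t}\|\beta_0+\bar\partial\eta_t\|^2\,dA\leq\pi(\gg-1)$ and the fact that the non-zero harmonic differential $\beta_0$ has only finitely many zeros, in order to keep a Moser-Trudinger type inequality effective.
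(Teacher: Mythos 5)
First, a remark on the comparison itself: the paper does not prove Theorem \ref{thmB.1} — it is imported verbatim as Theorem 6 of \cite{Tar_2} — so I am measuring your proposal against the machinery the paper assembles around it. For the second assertion (attainment of the infimum when $[\b]_\PP\notin\tau(X)$) your argument is correct and is exactly the intended route: the dichotomy of FACT 1/FACT 2, the degree count $\deg D\in\{1,\ldots,\gg-1\}$ forcing a single blow-up point of multiplicity one when $\gg=2$, Theorem \ref{thmB.3} to conclude $[\b]_\PP=\tau(q)$, and Theorem \ref{thmprimo} to upgrade compactness to existence and uniqueness of the critical point. Likewise the reduction of the first assertion to $\liminf_{t\to0^+}m(t)>-\infty$ via $F_0(u,\eta)\geq m(t)-t\int_Xe^u\,dA$ is sound.

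The genuine gap is the uniform lower bound on $m(t)$, which you correctly identify as the main obstacle but do not resolve. Writing $u_t=w_t+d_t$, the Gauss--Bonnet identity gives $m(t)=\tfrac14\|\nabla w_t\|_{L^2}^2-4\pi(\gg-1)d_t+O(1)$ and $e^{d_t}\le \pi(\gg-1)/\int_Xe^{w_t}\|\b_0+\bar\p\eta_t\|^2dA$, so what is actually required is the weighted inequality $\ln\int_Xe^{w_t}\|\b_0+\bar\p\eta_t\|^2dA\ \geq\ -\tfrac1{16\pi(\gg-1)}\|\nabla w_t\|_{L^2}^2-C$. Jensen's inequality reduces this to a reverse-H\"older control of the weight $h_t=\|\b_0+\bar\p\eta_t\|^2$ (a bound on $\|h_t\|_{L^{p}}/\|h_t\|_{L^{1}}$ for some $p>1$), and this is precisely what is \emph{not} available: only $\|h_t\|_{L^1}\geq\|\b_0\|_{L^2}^2$ is known, while $\bar\p\eta_t$ carries no a priori higher integrability, and $h_t=e^{-2u_t}\|\a_t\|^2$ degenerates together with the conformal factor. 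More tellingly, every ingredient you invoke — $[\b]\neq0$, the integral identity from the first equation of \eqref{system_of_equations_intro}, and the $L^2$-orthogonality $\|\b_0+\bar\p\eta_t\|_{L^2}^2=\|\b_0\|_{L^2}^2+\|\bar\p\eta_t\|_{L^2}^2$ — holds for every genus, yet the conclusion is false in general for $\gg\geq3$: by \eqref{3.1**} boundedness from below forces $\rho([\b])=4\pi(\gg-1)$, which can fail when $\gg\geq3$. Hence no argument built only from these inputs can close. What makes $\gg=2$ work is the mass quantization of Remark \ref{rho1}: $\rho([\b])\geq4\pi$ combined with $\rho([\b])\leq4\pi(\gg-1)=4\pi$ pins the (possible) blow-up to a single concentrated $8\pi$-bubble, and the pointwise profile estimates of \cite{Tar_1}, \cite{Bartolucci_Tarantello} then yield the exact cancellation $\tfrac14\|\nabla w_t\|_{L^2}^2-4\pi d_t=O(1)$. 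This blow-up input, which you use only for the second assertion, is in fact indispensable for the first.
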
 
\vskip0.3cm

By showing that, for $\gg=2$ the Donaldson functional is invariant under bi-holomorphisms of X (see Appendix 2) and by recalling the unique biholomorphic hyperelliptic involution (see \cite{Miranda}, \cite{Griffiths_Harris})
\begin{eqnarray*}
    j: X\to X
\end{eqnarray*} with exactly $2(\gg+1)=6$ distinct fixed points, 
we obtain the following improvement of Theorem \ref{thmB.1}:

\begin{thm}\label{thmg2} 
Assume that $\gg=2$ and let $[\b]\in \H^{0,1}(X,E)\setminus\{0\}.$ \\
$$ \begin{array}{l} 
\text{(i) Either: }
\, \lim_{t \to 0^+} max_{X} \xi_t = + \infty
\text{ (blow-up), and then there exist a \underline{unique} point} \\ q \in X: 

\mbox{$[\beta]_{\mathbb{P}} = \tau(q) \mbox{ \ with \ }  j(q) = q,$ }  and \, 
 
\lim_{t \to 0^+} max_{K} \xi_t = - \infty,
\mbox{ for any compact}\\ K \subset X \setminus \{q\}.\\
\text{(ii) Or:} \,\, \lim_{t \to 0} max_{X} \xi_t < + \infty, 

\text{\ then $F_0$ attains its infimum in $\Lambda,$ }\\
\text{and the (smooth)  minimum point is the only critical point of $F_0.$} \end{array} $$ 
In particular, if  $[\beta]_{\mathbb{P}}\not \in \{
\tau(q),  \  q \in X \mbox{ \ with \ } j(q)=q$\} then alternative (ii) holds.  
\end{thm}

Concerning the case of higher genus, we can use Proposition \ref{***}  to deduce that, if $[\b]_{\PP}$ satisfies \eqref{1.0**} for some $ \wt D\in X^{(\nu)}$ with $\nu\in\{1,\cdots, \gg-1\}$, then $[\b]_{\PP}$ belongs to the subvariety $\wt\S_\nu$ in $\PP(\mathcal{H}^{0,1}(X,E))$ with $\text{dim}(\wt \S_\nu) \leq 2\nu -1.$  Moreover, we have: $\wt\S_1=\tau(X)\leq \wt \S_2 \subseteq \cdots \subseteq \wt \S_{\gg-1}$, where $\tau(X)$ is the image of the Kodaira map in \eqref{kodaira map}. In particular  $\PP(\mathcal{H}^{0,1}(X,E)\setminus \wt\S_{\gg-1}$  is an open Zariski dense subset.

So by virtue of Theorem \ref{thm3}  we can derive the analogous of Theorem \ref{thmB.1} for the case of genus $\gg=3.$

 \begin{thm}\label{thm1.1} 
Assume $\gg= 3$ and let $[\b]\in \H^{0,1}(X,E)\setminus\{0\}$  satisfy  $[\b]_\PP\notin \wt\S_{\gg-1}.$ Then $F_0$ is bounded from below and attains its infimum in $\Lambda.$ Moreover the (smooth)  minimum point is the only critical point of $F_0$. 
\end{thm}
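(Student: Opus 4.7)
The plan is to rule out blow-up of $\xi_t$ as $t \to 0^+$ by combining Theorem \ref{thm3} with Proposition \ref{***}, and then to invoke FACT 1 together with Theorem \ref{thmprimo} to conclude.

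First I would argue by contradiction and assume that $\xi_t$ blows up as $t \to 0^+$. By FACT 2, along any sequence $t_k \to 0^+$ the function $\xi_k := \xi_{t_k}$ admits a non-empty blow-up set $\mathcal{S}$, and by Definition \ref{blowupdiv} the associated blow-up divisor is
$$D_2 = \sum_{q \in \mathcal{S}} n_q\, q, \qquad n_q = \tfrac{1}{8\pi}\sigma(q) \in \mathbb{N}.$$
By \eqref{degblowup} we have $\deg(D_2) = \sum_{q \in \mathcal{S}} n_q \in \{1,\ldots,\gg-1\}$, which for $\gg = 3$ forces $\deg(D_2) \in \{1,2\}$. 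In particular $n_q \in \{1,2\}$ for every $q \in \mathcal{S}$, so the multiplicity hypothesis \eqref{1.0*} of Theorem \ref{thm3} is automatically verified.

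Next I would apply Theorem \ref{thm3} to produce a non-zero effective divisor $\wt D \leq D_2$, say $\wt D \in X^{(\nu)}$ with $\nu \in \{1,2\}$, satisfying $\int_X \b_0 \wedge \a = 0$ for every $\a \in Q(\wt D)$. By the characterization of $\wt\S_\nu$ furnished by Proposition \ref{***}, this precisely means $[\b]_\PP \in \wt\S_\nu \subseteq \wt\S_{\gg-1}$, contradicting the standing assumption $[\b]_\PP \notin \wt\S_{\gg-1}$.

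Therefore no blow-up can occur, i.e.\ $\limsup_{t \to 0^+} \max_X \xi_t < +\infty$, and FACT 1 then yields $(u_t,\eta_t) \to (u_0,\eta_0)$ in $C^\infty(X)$ for a solution $(u_0,\eta_0)$ of \eqref{system_of_equations_intro} at $t=0$. Theorem \ref{thmprimo}(ii) now guarantees that $F_0$ is bounded from below in $\Lambda$, that $(u_0,\eta_0)$ realizes its infimum, and that it is the only critical point of $F_0$. The entire analytical weight of the argument is carried by Theorem \ref{thm3}; once it is granted, Theorem \ref{thm1.1} is a clean packaging, with the hypothesis $\gg = 3$ entering only to force $n_q \leq 2$ so that \eqref{1.0*} is automatic. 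The main obstacle to extending this scheme to larger genus is exactly the current multiplicity restriction in Theorem \ref{thm3}, which would need to be removed by a further inductive scaling analysis of "collapsing zeroes" of $\widehat\a_t$.
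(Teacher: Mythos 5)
Your proposal is correct and follows essentially the same route as the paper: the authors likewise deduce Theorem \ref{thm1.1} by combining Theorem \ref{thm3} (whose multiplicity hypothesis \eqref{1.0*} is automatic for $\gg=3$ since $\deg(D)\leq \gg-1=2$) with the characterization of $\wt\S_\nu$ in Proposition \ref{***} to exclude blow-up, and then invoke Theorem \ref{thmprimo}. No gaps.
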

Clearly, Theorem \ref{thm1} follows from Theorem \ref{thmg2}
 and   Theorem \ref{thm2}  follow from Theorem \ref{thm1.1}.
 
Reasonably, we expect Theorem  
\ref{thm1.1} to remain valid for any genus $\gg\geq3.$

\section{Asymptotics}  \label{Asymptotics}

As already mentioned, from Theorem \ref{thmprimo}, we know that the functional $F_0$  admits a (weak) critical point if and only if as $t\to 0^+,$ the minimum pair $(u_t,\eta_t)$ of $F_t$ converges in $\Lambda.$ \\ 
So, the main purpose of this section is to describe  as accurately as possible the asymptotic behaviour of $(u_t,\eta_t)$ as $t\to 0^+.$  For this purpose we let:
\begin{equation*}
u_{t}
=
w_{t} 
+
d_{t}
,
\; \text{ with } \; 
\int_{X} w_{t}dA=0
\; \text{ and } \;  
d_{t}=\fint_{X}u_{t}dA
\end{equation*}
\begin{equation*}
\beta_{t}=\beta_{0}+\bar{\partial}\eta_{t} \in A^{0,1}(X,E)
\; \text{ and } \; 
\alpha_{t}=e^{u_{t}}*_{E}\beta_{t}\in C_{2}(X)
\end{equation*}
and recall that, 
\begin{equation*}
s_{t}\in \R \; : \; e^{ s_{t}}=\Vert \alpha_{t} \Vert_{L^{2}}^{2} \,\,\,\text{and} \,\,\, \hat{\alpha_t}=e^{-s_{t}/2}\alpha_{t}.
\end{equation*}
The following easy bounds where derived in \cite{Tar_2}:
\begin{lemma}
\begin{equation}\label{property_v} \begin{array}{l}
\; \forall \;   q  \in [ 1,2 )  \;
\; \exists \; 
C_{q}>0 \; : \; \Vert w_{t} \Vert_{W^{1,q}(X)}\leq C_{q},
\\
w_{t}\leq C \; \text{ in  } \; X \text{ and } \, \; te^{d_t}\leq 1, \\ \int_{X}e^{- u_{t}}dA \geq
C \fint_{X} \Vert \beta_{0} \Vert^{2}dA,\\
s_t \leq d_t + C,\\
\end{array} \end{equation}
for a suitable constant $C>0.$

\end{lemma}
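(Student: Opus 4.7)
The plan is to extract all five estimates from a single identity together with standard elliptic theory. Integrating the Gauss equation in \eqref{system_of_equations_intro} over $X$ and using Gauss--Bonnet (so that $\int_X dA = 4\pi(\gg-1)$) yields
$$t\int_X e^{u_t}\,dA + 4\int_X e^{u_t}\|\beta_t\|^2\,dA = 4\pi(\gg-1),\qquad \beta_t:=\beta_0+\bar\partial\eta_t,$$
so both terms on the left are uniformly bounded in $t$. Applying Jensen's inequality to the mean-zero $w_t$ gives $\int_X e^{w_t}\,dA\geq 4\pi(\gg-1)$, whence $te^{d_t}\leq 1$ at once. Writing $f_t:=-\Delta w_t=2-2te^{u_t}-8e^{u_t}\|\beta_t\|^2$, the same identity bounds $\|f_t\|_{L^1}$ uniformly; since $f_t$ has mean zero, standard $L^1$--$W^{1,q}$ regularity on the closed surface $(X,g_X)$ yields $\|w_t\|_{W^{1,q}}\leq C_q\|f_t\|_{L^1}$ for every $q\in[1,2)$.

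The one nontrivial point, which I expect to be the main obstacle, is the one-sided bound $w_t\leq C$. The crucial observation is that $f_t\leq 2$ pointwise, so $f_t^+$ is uniformly bounded in $L^\infty$. I would decompose $w_t=w_t^{(1)}+w_t^{(2)}$ with
$$-\Delta w_t^{(1)}=f_t^+-\textstyle\fint_X f_t^+,\qquad -\Delta w_t^{(2)}=-f_t^-+\textstyle\fint_X f_t^-,$$
both of mean zero. Elliptic regularity gives $\|w_t^{(1)}\|_{L^\infty}\leq C$ uniformly, while the Green representation $w_t^{(2)}(x)=-\int_X G(x,y)f_t^-(y)\,dA(y)$, combined with the uniform $L^1$ bound on $f_t^-$ and the fact that the Green's function for $-\Delta_{g_X}$ is bounded \emph{below} on $X\times X$ (its only singularity on the diagonal is a positive logarithm), produces $w_t^{(2)}\leq M\|f_t^-\|_{L^1}\leq C$.

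For the lower bound on $\int_X e^{-u_t}\,dA$ the key input is the harmonicity of $\beta_0$: being $L^2$-orthogonal to all $\bar\partial$-exact forms, $\int_X\langle\beta_0,\beta_t\rangle\,dA=\int_X\|\beta_0\|^2\,dA$. Pointwise Cauchy--Schwarz together with the weighted Cauchy--Schwarz inequality (splitting $\|\beta_0\|\|\beta_t\|=(\|\beta_0\|e^{-u_t/2})(\|\beta_t\|e^{u_t/2})$) then yields
$$\int_X\|\beta_0\|^2\,dA\leq \|\beta_0\|_\infty\Bigl(\textstyle\int_X e^{-u_t}dA\Bigr)^{1/2}\Bigl(\textstyle\int_X e^{u_t}\|\beta_t\|^2dA\Bigr)^{1/2},$$
and the last factor is controlled by the integrated Gauss equation; squaring and rearranging gives the claimed lower bound (with $C$ depending on $\beta_0$). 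Finally,
$$e^{s_t}=\|\alpha_t\|_{L^2}^2=\int_X e^{2u_t}\|\beta_t\|^2\,dA=e^{d_t}\int_X e^{w_t}\,e^{u_t}\|\beta_t\|^2\,dA\leq e^{d_t+C}\cdot\pi(\gg-1),$$
where the already-established $w_t\leq C$ and the integrated Gauss equation absorb everything into a multiplicative constant, giving $s_t\leq d_t+C'$.
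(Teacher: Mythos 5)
Your proof is correct. The paper itself gives no argument for this lemma --- it simply cites Lemma 3.7 and Remark 3.1 of \cite{Tar_2} --- so there is nothing to compare line by line, but every step you supply is the standard (and surely the intended) one: the integrated Gauss equation $t\int_X e^{u_t}dA+4\int_X e^{u_t}\Vert\beta_t\Vert^2dA=4\pi(\gg-1)$, Jensen for $te^{d_t}\leq 1$, the Brezis--Merle type $L^1\to W^{1,q}$ bound for $q<2$, the splitting of $-\Delta w_t$ into its bounded positive part and its $L^1$-controlled negative part together with the lower bound on the Green's function for the one-sided estimate $w_t\leq C$, the $L^2(g_X)$-orthogonality of the harmonic representative $\beta_0$ to $\bar\partial$-exact forms for the lower bound on $\int_X e^{-u_t}dA$, and $\Vert\alpha_t\Vert_{L^2}^2=\int_X e^{2u_t}\Vert\beta_t\Vert^2dA$ combined with $w_t\leq C$ for $s_t\leq d_t+C$. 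The only cosmetic remark: in the third estimate your constant a priori depends on $\beta_0$ through $\Vert\beta_0\Vert_\infty$; since harmonic Beltrami differentials form a finite-dimensional space, $\Vert\beta_0\Vert_\infty^2\leq C'\fint_X\Vert\beta_0\Vert^2dA$ with $C'$ depending only on $X$, which upgrades your bound to the stated form with $C$ independent of $\beta_0$.
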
	
\begin{proof}
See Lemma 3.7 and Remark 3.1 of \cite{Tar_2}.    
\end{proof}

It was observed in \cite{Tar_2} that the map:
\begin{eqnarray*}
    t\to 4\int_X e^{u_t} \|\b_0+\bar \p \eta_t\|^2 dA=4\pi(\gg-1)-t\int_X e^{u_t} dA
\end{eqnarray*} is decreasing in $(0,+\infty)$ (see Lemma 3.6 of \cite{Tar_2}), and so it is well defined the value:
\begin{eqnarray}\label{rho}
    \rho([\b])=\rho([\b_0]):=4 \lim_{t\to0^+} \int_X e^{u_t} \|\b_0+\bar\p \eta_t\|^2 dA =4\lim_{t\to0^+} \int_X e^{\xi_t} \|\wh\a_t\|^2 dA.
\end{eqnarray}Notice in particular that,
\begin{eqnarray}\label{3.1*}
\rho([\b])\in [0, 4\pi(\gg-1)] \text{ and } \rho([\b])=0 \Longleftrightarrow [\b]=0.
\end{eqnarray}
Furthermore, it was shown in \cite{Tar_2} that in case $F_0$ is bounded from below then necessarily: 
\begin{eqnarray}\label{3.1**}
\rho ([\b])= 4\pi(\gg-1) \;
 \text{ i.e. } \, \lim_{t\to0^+} t\int_X e^{u_t}=0.
\end{eqnarray}

In view of the estimates in \eqref{property_v}, along a (positive) sequence
$t_{k}\longrightarrow 0^{+}$,  
for $$d_{k}:=d_{t_{k}}, \,\; u_{k}=u_{t_{k}}, \,\; w_k:=w_{t_k},$$  
we may assume that,
\begin{equation*}
w_{k} \longrightarrow w_{0}
\; \text{ and } \; 
e^{w_{k}}\longrightarrow e^{w_{0}} 
\; \text{ pointwise and in   } \; 
L^{p }(X),\; 
\end{equation*}
\begin{equation}\label{6.20_prime}
t_{k}e^{d_{k}} \longrightarrow \mu \geq 0
\; \text{ and so } \;
t_{k}e^{u_{k}}
\longrightarrow 
\mu e^{w_{0}}
\; \text{ pointwise and in  } \; 
L^{p}(X), 
\end{equation}
for any $p>1,$ and as $k\longrightarrow +\infty$.

In addition, it follows from the discussion in Section 1.1, that 
for suitable $1\leq N \leq 4(\mathfrak{g}-1)$ and $k$ large, we may assume that the zero divisor of $\wh\a_k:=\wh \alpha_{t_{k}}\in C_{2}(X) \setminus \{ 0 \}$ satisfies:

\begin{eqnarray*}
    \text{div}(\wh \a_k)=\sum_{j=1}^N n_j z_{j,k}\;\, \text{and} \,\, \sum_{j=1}^{N} n_{j}=4(\mathfrak{g}-1).
\end{eqnarray*}
with $z_{j,k}\neq z_{l,k} \; \mbox{ for  $j\neq l\in \{1,\ldots,N \}$}$ the (distinct) zeroes of $\wh \a_k$ and $n_{j}\in \mathbb{N}$ the  multiplicity relative to $z_{j,k}, \; j\in \{1,\ldots,N \}$. 

Moreover, $\text{ as } k\to +\infty,$ we may let,
$$
\wh\a_k\to \wh\a_0, \;\; z_{j,k}\longrightarrow z_{j},
\; \text{ with } \; \wh\a_0(z_{j})=0, \;\, j \in \{ 1,\ldots,N \}.
$$

Since the total multiplicity of each $z_{j}$ adds up to
the value: $4(\mathfrak{g}-1)$,
we see that $\wh\a_0$ cannot vanish anywhere else.
However, while the points in $Z^{(k)}:=\{ z_{1,k},\ldots,z_{N,k} \},$ are distinct (and  correspond to the  distinct  zeroes  of $\wh\alpha_k$,) their limit points $\{z_1,\ldots, z_N\}$ may \underline{not} be distinct.

Therefore, we let: $D_0= div(\wh \a_0),$ be the zero divisor of $\wh \a_0$ and define its support:
 $$Z^{(0)}= supp\, D_0$$ 
 which collects the distinct points in $\{z_1,\ldots, z_N\}$. To be more precise we set, 
$$
    D_0=\text{div}(\wh\a_0)=\sum_{p\in Z^{(0)}} n_p p \,\ \text{with} \sum_{p\in Z^{(0)}} n_p=4(\gg-1)
$$
Hence, 
we can identify the set $Z_0$ of points in $Z^{(0)}$ (possibly empty) which correspond to the limit of "collapsing"  zeroes in $Z^{(k)}$ as given by:
\begin{eqnarray*}
    Z_0:=\{p\in Z^{(0)}: n_p>n_j \text{ with } p=z_j \text{ for some } 1\leq j\leq N \},
\end{eqnarray*}
which is indeed well defined (independently on the choice of the index $j$) since if $z_j=p=z_k$ for $j\neq k\in \{1,\ldots, N\}$, then $n_p\geq n_j+n_k> \max\{n_j,n_k\}$.

We define:
$$
\xi_{k}=- (u_{t_{k}}-s_{t_{k}})
$$
and let,
\begin{equation}\label{6.22}
R_{k} = 8 \Vert \wh{\alpha}_{k} \Vert^{2}
\end{equation}
so that $R_k$ and $|\nabla R_k|$ are uniformly bounded in $X$. Moreover, we have:
\begin{equation}\label{6.23}
-\Delta \xi_{k}
=
R_{k}e^{\xi_{k}}-f_{k}
\; \text{ in  } \; 
X \qquad \text{ and } \,\,
\int_{X}R_{k}e^{\xi_{k}}\, \leq C
\end{equation}
with 
$f_{k}
: =
2 (1-t_{k}e^{u_{t_{k}}})
$ 
satisfying:
$\Vert f_{k} \Vert_{L^{\infty}(X)}\leq 2$ and in view of \eqref{6.20_prime},
\begin{equation}\label{6.24}
\begin{split}
&
f_{k}
\rightarrow
f_{0}
=:
2 (1-\mu e^{w_{0}})
\, \text{ in } \, 
L^{p}(X),\; p>1;
\\
& 
\int_{X}f_{0}=2 \rho([\beta])>0, 
\; \text{ for } \; [\beta]\neq 0.
\end{split} 
\end{equation} 
 
Also notice that,
\begin{equation}\label{form_of_R_k}
R_{k}(z)
=
8 
\prod_{j=1}^{N}(d_{g_{X}}(z,z_{j,k}))^{2 n_{j}}G_{k}(z)
,\;
z \in X,
 \end{equation}
where $d_{g_{X}}$ defines the distance relative to the metric $g_X$. From (\ref{6.22}) we have:
$$G_{k}\in C^{1}(X) \;\; 0 < a \leq G_{k} \leq b	 \mbox{ and  \ }  \vert \nabla G_{k} \vert \leq A \mbox{ \  in   }  X, $$
 with suitable positive constants $a,b$ and $A$.
Hence (by taking a subsequence if necessary) we may assume that, 
\begin{equation}\label{6.27}
G_{k}\rightarrow G_{0}
\; \text{ in   } \; 
C^{0}(X) 
\; \text{ and so } \; 
R_{k}\rightarrow R_{0}
\; \text{ in } \; 
C^{0}(X),
\; \text{ as } \; 
k\rightarrow +\infty,
\end{equation}
with
\begin{equation}\label{3.57a}
R_{0}(z)
=
8 
\prod_{p\in Z^{(0)}}  (d_{g_{X}}(z,p))^{2n_{p}}G_{0}(z)
=
8 \Vert \hat{\alpha}_0 \Vert^{2}.
\end{equation}
 
With the information above, we can apply  Theorem 3 of \cite{Tar_1} 
and deduce the following alternatives
about the asymptotic behaviour of $\xi_{k}$:
\begin{thmx}[Theorem 3 \cite{Tar_1}]\label{thm_blow_up_global_from_part_1} 
Let $\xi_{k}$ satisfy \eqref{6.23} and  assume
\eqref{6.24}-\eqref{3.57a}. Then (with the above notation)
one of the following alternatives holds (along a subsequence):

\

\noindent
(i) \quad (compactness)\; : \; 
$\xi_{k}\longrightarrow \xi_{0}$ in $C^{2}(X)$ with
\begin{equation}\label{thm_referenced_compactness}
-\Delta \xi_{0}
=
R_{0}e^{\xi_{0}}-f_{0}
,\;
\; \text{ in   } \; 
X
\end{equation}

\noindent
(ii)  \quad (blow-up)\; : \;There exists a \underline{finite} blow-up set 
$$
\mathcal{S}
=
\{ x\in X
\; : \; 
\; \exists \; 
x_{k}\rightarrow x
\; \text{ and } \; 
\xi_{k}(x_{k})
\rightarrow 
+ \infty,
\; \text{ as } \; 
k\rightarrow +\infty
 \} 
$$

\quad \,
such that, 
$\xi_{k}$ 
is uniformly bounded from above on compact sets of $X\setminus \mathcal{S}$

\quad \;
and,
as $k\rightarrow +\infty$, 
\begin{enumerate}[label=(\roman*)]
\item[a)] either (blow-up with concentration)\;:\;
\end{enumerate}
$$
\begin{array}{l}
\xi_{k}\longrightarrow -\infty
\; \text{ uniformly on compact sets of  } \;
X\setminus \mathcal{S},
\\
R_{k}e^{\xi_{k}}
\rightharpoonup
\sum_{x\in \mathcal{S}}\sigma(x)\delta_{x}
\; \text{ weakly in the sense of measures, and }
\; 
\sigma(x)\in 8\pi \mathbb{N}.
\end{array}
$$  
\text{In particular,} $ \int_{X} f_{0}\,dA\in 8\pi \mathbb{N},$ 
\begin{eqnarray} \label{3.58a'}
\sigma(x)=8\pi \; \text{ if } \; x\not \in Z^{(0)} 
\; \text{ and } \; 
\sigma(x) = 8\pi (1+n_{i}) \; \text{ if } \; x=  z_{i} \in Z^{(0)} \setminus Z_{0}. \end{eqnarray}

Such an alternative always holds
when $\mathcal{S} \setminus Z_{0} \neq \emptyset$. 

\item[b)] or (blow-up without concentration)\;:\; 
\begin{align}
&
\xi_{k}\rightarrow \xi_{0}
\; \text{ in } \; C^{2}_{loc}(X\setminus \mathcal{S}),
\label{3.58c}
\\
&
R_{k}e^{\xi_{k}}
\rightharpoonup
R_{0}e^{\xi_{0}}
+
\sum_{x\in \mathcal{S}}\sigma(x)\delta_{x}
\; \text{ weakly in the sense of measures, } \;
\notag
\\
&
\sigma(x)\in 8\pi \mathbb{N} \;\;\;\text{ and } \; \mathcal{S} \subset Z_{0}, 
\notag
\end{align}	
with $\xi_{0}$ satisfying:
\begin{equation*}
\quad\quad\quad
-\Delta \xi_{0}
=
R_{0}e^{\xi_{0}}
+
\sum_{x\in \mathcal{S}}\sigma(x)\delta_{x}-f_{0}
\; \text{ in  } \; 
X.
\end{equation*}

\end{thmx}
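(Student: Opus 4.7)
My plan is to follow the classical Brezis--Merle framework (dichotomy plus finiteness of the blow-up set), combined with Chen--Li / Pohozaev quantization and a careful analysis of the weight $R_k$ when several zeros collapse. Throughout, I would work with the equation $-\Delta \xi_k = R_k e^{\xi_k} - f_k$ under the uniform bounds $\|f_k\|_{L^\infty}\leq 2$ and $\int_X R_k e^{\xi_k}\,dA \leq C$.

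First I would establish the basic dichotomy. By the Brezis--Merle inequality, at any point $x_0\in X$ such that $\int_{B_r(x_0)} R_k e^{\xi_k}\,dA < 4\pi - \varepsilon$ for all large $k$, the positive parts $\xi_k^+$ are locally uniformly bounded in $B_{r/2}(x_0)$. Defining $\mathcal{S}$ as in the statement, this yields finiteness of $\mathcal{S}$ together with uniform upper bounds for $\xi_k$ on compact subsets of $X\setminus \mathcal{S}$. If $\mathcal{S}=\emptyset$, standard elliptic bootstrap upgrades the resulting $W^{1,q}$ bounds to $C^2$ convergence $\xi_k \to \xi_0$ solving \eqref{thm_referenced_compactness}, giving alternative (i). If $\mathcal{S}\neq\emptyset$, extracting a further subsequence we obtain $\xi_k \to \xi_0$ in $C^2_{loc}(X\setminus \mathcal{S})$ with $\xi_0$ either finite everywhere on $X\setminus \mathcal{S}$ or identically $-\infty$; this is exactly the branch point between (ii.b) and (ii.a).

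Next I would quantize the local mass $\sigma(x)$ at each $x\in \mathcal{S}$. Working in a small disk around $x$, I would rescale $\xi_k$ at the local maxima, extract a bubble on $\mathbb{R}^2$, and use the fact that, after rescaling, $R_k$ converges locally to a model weight $c|y|^{2m}$ where the integer $m$ encodes the local vanishing of $R_k$: $m=0$ for $x\notin Z^{(0)}$, $m=n_i$ for $x=z_i\in Z^{(0)}\setminus Z_0$, and $m$ determined by the collapsing pattern for $x\in Z_0$. The limiting profile satisfies $-\Delta U = c|y|^{2m}e^U$ on $\mathbb{R}^2$ with finite total mass, and the Chen--Li / Prajapat--Tarantello classification produces a bubble of mass exactly $8\pi(1+m)$. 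Combined with a local Pohozaev identity on shrinking balls, this gives $\sigma(x)\in 8\pi\mathbb{N}$ and the explicit values \eqref{3.58a'} at non-collapsing points.

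For the concentration/non-concentration dichotomy, the crucial observation is that if $x\in \mathcal{S}\setminus Z_0$, then $R_k$ near $x$ has a rigid local structure (either uniformly positive or a single zero of fixed multiplicity). A Harnack-type inequality in the neck region, in the spirit of Bartolucci--Tarantello and Chen--Lin, then forces $\xi_k \to -\infty$ uniformly on compact subsets of $X\setminus \mathcal{S}$, yielding (ii.a). Conversely, when $\mathcal{S}\subset Z_0$, part of the $L^1$-mass of $R_k e^{\xi_k}$ can remain diffused on $X\setminus \mathcal{S}$, so that the limit $\xi_0$ solves a Liouville equation with additional Dirac sources at $\mathcal{S}$, which is exactly (ii.b). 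The main obstacle is the analysis at a point $p\in Z_0$: the several accumulating zeros $z_{j_\ell,k}\to p$ live on a hierarchy of shrinking scales, and one must perform successive rescalings that separate these scales, at each level identifying which zeros have already merged. Each intermediate scale generates its own limiting profile with quantized mass, and the challenge is to control the interactions between scales and show that the total mass redistributes as $R_0 e^{\xi_0}+\sum_{x\in \mathcal{S}}\sigma(x)\delta_x$ with $\sigma(x)\in 8\pi\mathbb{N}$. This multi-scale selection and matching argument, carried out in \cite{Tar_1}, is the genuine analytic novelty beyond the classical Brezis--Merle--Li--Shafrir theory.
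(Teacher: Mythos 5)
First, a point of comparison: this statement is not proved in the paper at all. It is Theorem 3 of \cite{Tar_1} quoted verbatim as Theorem E, and the surrounding text simply says that one ``can apply Theorem 3 of \cite{Tar_1}''; so there is no in-paper argument to measure your sketch against, and the relevant proof lives entirely in \cite{Tar_1}. Your outline --- Brezis--Merle dichotomy for the finiteness of $\mathcal{S}$ and the upper bounds off $\mathcal{S}$, elliptic bootstrap for alternative (i), quantization by rescaling plus Pohozaev identities, and the integrability/Harnack obstruction that forces concentration whenever $\mathcal{S}\setminus Z_{0}\neq\emptyset$ --- is the correct architecture of that proof.

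That said, your second paragraph contains a step that fails exactly where the theorem is nontrivial. You assert that after rescaling at a blow-up point the weight $R_k$ ``converges locally to a model weight $c|y|^{2m}$''. This is true at $x\notin Z^{(0)}$ (with $m=0$) and at a non-collapsing zero $z_i\in Z^{(0)}\setminus Z_{0}$ (with $m=n_i$), which is why \eqref{3.58a'} follows from \cite{Li_Shafrir}, \cite{Bartolucci_Tarantello} and \cite{Prajapat_Tarantello}. But at a point $p\in Z_{0}$ the zeros $z_{j,k}\to p$ separate at different rates: rescaling by $\tau_k=\max_j d_{g_X}(z_{j,k},p)$ produces the weight $\prod_j|y-q_{j,k}|^{2n_j}$ with $|q_{s,k}|=1$ and possibly further collapsing clusters among the $q_{j,k}$, not a single monomial $c|y|^{2m}$. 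The classification of entire solutions then does not hand you a single quantized bubble mass; at each scale one only obtains the Pohozaev relation between the total local mass $\sigma$ and the mass $\lambda$ captured at that scale --- the identity $\sigma^2-\lambda^2=8\pi(n+1)(\sigma-\lambda)$ of \cite{Lee_Lin_Wei_Yang}, recorded in this paper as \eqref{s0 identity} --- and $\sigma(x)\in 8\pi\mathbb{N}$ follows only after an induction over the hierarchy of scales, which strictly decreases the number of collapsing zeros at each step. Your closing paragraph names this multi-scale matching but defers it entirely to \cite{Tar_1}; since that is precisely the content distinguishing Theorem E from the classical Brezis--Merle/Li--Shafrir theory (and the only reason the non-concentration alternative (ii.b) with $\mathcal{S}\subset Z_{0}$ can occur at all), the proposal as written does not close the quantization claim, although it is an accurate map of how \cite{Tar_1} closes it.
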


It is useful to emphasise that if alternative (ii)-b) holds then blow-up occurs at points of "collapsing" zeroes . Actually, in this case the (CMC) immersion of $X$ corresponding to the Cauchy data $(u_t-s_t,\hat{\alpha}_t)$ can be taken to the limit, as $t \to 0^+$ along a sequence, to give a (CMC) immersion of $X$  into a hyperbolic cone-manifold of dimension 3 (characterized by the presence of conical singularities along lines). See (\cite{KS}) for details about hyperbolic cone-manifolds. In particular, at the limit, the induced metric on $X$ admits conical singularities at the blow-up points with conical angles an integral multiple of $8\pi,$ (and not the usual $4\pi$ due to our normalization of the conformal factor) see e.g. \cite{Mazzeo_Zhu_1, Mazzeo_Zhu_2}, \cite{Mondello_Panov_1, Mondello_Panov_2}).

\begin{remark} \label{rho1}
\medskip

1) By virtue of Theorem \ref{thm_blow_up_global_from_part_1} and \eqref{3.1**}, we record that:
$$
    \text{if } [\b]\neq 0, \text{ then } \rho([\b]) \geq 4\pi \text{ and in particular  if } \gg=2 \text{ then } \rho([\b])=4\pi. $$

2) If alternative (i) holds then we can use Theorem \ref{thmprimo} to conclude that $F_{0}$ is bounded from below in $\Lambda$ and
$(u_{t},\eta_{t})\rightarrow (u_{0},\eta_{0})$ in $\Lambda$,
as $t\rightarrow 0^{+}$,
with $(u_{0},\eta_{0})$ the global minimum and only critical point of $F_{0}$, and  
$\rho([\beta])=4\pi(\mathfrak{g}-1),$  see \cite{Tar_2} for details.\\

3) If alternative (ii) holds with $D$ the blow-up divisor of $\xi_k,$ then $ \text{degree} \, D \leq \frac{\rho([\b])}{4\pi} $ and ``blow-up with concentration" (i.e. (ii)-a) 
occurs if and only if 
degree $D=\frac{\rho([\b])}{4\pi}.$
\end{remark}

In the following, we shall investigate the sequence $\xi_k$ in case of blow-up (in the sense of alternative (ii) of Theorem \ref{thm_blow_up_global_from_part_1}) with the purpose to obtain the orthogonality relation (\ref{1.0**}) for the given class $[\b]\in \H^{0,1}(X,E)\setminus\{0\}$.  \\
We let the corresponding blow-up set of $\xi_k$ be given by:  
$$
\mathcal{S} = \{x_1,\ldots,x_m\}.
$$ 
For $r>0$ sufficiently small and $l\in\{1,...,m\}$ we define: 
$$
x_{k,l} \in \overline{B(x_l;r)} : \xi_k(x_{k,l}) = max_{\overline{B}(x_l; r)} \xi_k \to + \infty \ \mbox{and} \ x_{k,l} \to x_l \ \mbox{as $k \to + \infty$}. $$

As already observed in \cite{Tar_2}, and in view of \eqref{antiso}, for every $\alpha \in C_2(X)$ we have:
\begin{lemma} For any $r>0$ sufficiently small we have:
\begin{equation}\label{betawedge} 
\begin{array}{l}
\int_X \beta \wedge \alpha = \int_X \beta_0 \wedge \alpha = \\ e^{\frac{-s_k}{2}}\left( \sum_{l=1}^{m} \int_{B(x_{l}; r)} e^{\xi_k}<*^{-1} \hat{\alpha_k},*^{-1} \alpha> d A  \right) +o(1) \\ =  e^{\frac{-s_k}{2}}\left( \sum_{l=1}^{m} \int_{B(x_{l}; r)} e^{\xi_k}<\alpha, \hat{\alpha_k}> d A  \right) +o(1) \\ \mbox{as \, $k \to + \infty.$ } \end{array} \end{equation}
\end{lemma}
\begin{proof} By formula (3.75) in \cite{Tar_2} and by using  \eqref{antiso} we find:
\begin{eqnarray*}
&& \int_X \beta_0 \wedge \alpha = e^{\frac{-s_k}{2}} \int_X e^{\xi_k} <*^{-1} \hat{\alpha_k}, *^{-1} \alpha> d A =\\
&&\; = e^{\frac{-s_k}{2}} \int_X e^{\xi_k} <\alpha, \hat{\alpha_k}> d A =\\
&&\; = e^{\frac{-s_k}{2}} \left( \sum_{l=1}^{m} \int_{B(x_{l}; r)}  e^{\xi_k} <\alpha, \hat{\alpha_k}> dA \right.\\
&& \; + \left. \sum_{l=1}^{m} \int_{X \setminus \bigcup_{l=1}^{m}  B(x_{l}; r)}  e^{\xi_k} <\alpha, \hat{\alpha_k}> dA     \right).
\end{eqnarray*}
Since
$$
c_{k}=F_{t_{k}}(u_{k},\eta_{k})
=
\frac{1}{4}\int_{X}\vert \nabla w_{k} \vert^{2}dA
-
4\pi(\mathfrak{g}-1)d_{k}+O(1),
$$
we see that, in case of blow up, necessarily:
$d_{k}\rightarrow +\infty \, \, \text{as} \,\,  k\to +\infty. $

Moreover,  $\Vert w_{k} \Vert_{L^{2}(X)}\leq C$ and 
we can use elliptic estimates to derive that the sequence $|w_{k}|$  is uniformly bounded away from the blow-up set $\mathcal{S}$, and therefore, 
\begin{equation}\label{xi_k_versus_d_k_minus_s_k}
\xi_{k}
=-  (d_{k}-s_{k})+O(1)
\; \text{ on compact sets of  } \;
X\setminus \mathcal{S}.
\end{equation}
We can use the last estimate in \eqref{property_v} together with \eqref{xi_k_versus_d_k_minus_s_k} and find a suitable constant $C=C_r > 0 )$ to obtain:
$$  e^{\frac{-s_k}{2}}\left\vert \sum_{l=1}^{m} \int_{X \setminus \bigcup_{l=1}^{m}  B(x_{l}; r)}  \!\!\!\!\!\!\!\!\!\!\!\!\!\!\!\! e^{\xi_k} <\alpha, \hat{\alpha}_k> dA    \right\vert   \leq C_r e^{\frac{-s_k}{2} - (d_k- s_k)} \leq C_r e^{\frac{-d_k}{2}} \to 0 $$ as $k \to + \infty,$
and \eqref{betawedge} is established.
\end{proof}

In particular, from \eqref{xi_k_versus_d_k_minus_s_k} we deduce that:\\
``blow-up with concentration"  
occurs if and only if $d_{k}-s_{k}\longrightarrow +\infty$.\\

So our effort in the following will be to estimate each of the integral terms in \eqref{betawedge}. \\ 

To this purpose we point out that, in local holomorphic z-coordinates, by means of formula \eqref{norme}, we have:  
\begin{equation}\label{localcoordqdiff}  
\begin{array}{l} 
\mbox{if } \hat{\alpha}_k = \hat{a}_k(z)(dz)^2 \,\,\mbox{ and } \alpha = a(z)(dz)^2, \,\,\mbox{ then} \\
\\
<\alpha \,,\, \hat{\alpha}_k>dA = a (\overline{\hat{a_k}}) |dz^2|^2 e^{2u_X} \frac{i}{2} dz \wedge d\bar{z}= 4 a (\overline{\hat{a_k}}) e^{-2u_X} \frac{i}{2} dz \wedge d\bar{z}. \\
 \end{array} \end{equation}  

\vspace{0.3cm}
We start our ``local" analysis around a given blow-up point, say $x_0\in \mathcal{S},$ and for small $r>0$ we let,

$$
  x_k\in X: \xi_k(x_k):=\max\limits_{B(x_0; r)} \xi_k\to +\infty \, \text{and} \, x_k\to x_0, \, \text{as} \, k\to +\infty.
$$

In $B(x_0; r)$ we introduce local "normal" holomorphic $z-$coordinate at $x_0$ centred at the origin  (as specified in \eqref{coord}
 \eqref{hypcoord})  and we write:

\begin{eqnarray}\label{alpha}
\hat{\alpha}_{k} = \hat{a}_{k}(z) (dz)^2 \,\,\,\, \mbox { and } \,\,\, \hat{\alpha}_0 = \hat{a}_{0}(z) (dz)^2
\end{eqnarray}
with $\hat{a}_{k}$ and $\hat{a}$ holomorphic functions in $\Omega_r$, and
\begin{eqnarray}\label{conv_alpha}
 \hat{a}_{k} \to \hat{a}_0 \,\, \mbox{ uniformly in $\Omega_r$ } \,\,\, \text{as} \, k\to +\infty.
 \end{eqnarray}

Moreover, we let: 
\begin {eqnarray}\label{zetakappa} 
z_k \,\, \mbox{ the local expression of $x_k$ in the given z-coordinates (at $x_0$)} \\ 
\mbox{ so that: }\,\,\,  
z_k \to 0 \,\,\mbox { as } k \to \infty.
\end{eqnarray}

As usual, we shall not distinguish between a function and its expression defined in $\Omega_r$ in terms of the $z-$coordinates. 

Therefore, by using a translation and  by replacing: 
\begin{eqnarray}\label{translation}
 \xi_k(z) \to \xi_k(z+z_k) \,\,\, \mbox{ defined in } \Omega_r - z_k ,  
\end{eqnarray} 
for $\delta >0$ sufficiently small: $\bar{B}_{\delta} \subset  (\Omega_r - z_k ),$ we are reduced to analyse the local problem:
\begin{eqnarray}\label{equation_xi}
-\Delta \xi_k = W_ke^{\xi_k} - g_k 
    \, \text{ in $B_\delta,$  } \,
\int_{B_\delta} W_k e^{\xi_k} \frac{i}{2} dz \wedge d\bar z \leq C,
\end{eqnarray} 
where $\Delta
:=4\p_z\p_{\bar z}$ is the flat Laplacian in $\CC$ (or $\R^2$), and we have:  
\begin{eqnarray}\label{equation_W}
W_{k}(z):= R_{k}(z+z_k)e^{2u_X(z+z_k)}\,\,\mbox { and }  \,\, g_{k}(z):= e^{2u_X(z+z_k)}f_k (z + z_k).
\end{eqnarray} 
Thus, in view of \eqref{translation}, there holds:
\begin{eqnarray}\label{blow up seq}
    \xi_k(0)=\max_{B_{\delta}} \xi_k \to +\infty, \quad \text{ as } k\to +\infty,
\end{eqnarray}
and we may let the origin be the only blow-up point of $\xi_k$ in $\bar B_{\delta}$,   namely:
\begin{eqnarray}\label{blow-up seq bounds cpt set}
\forall    K \Subset \bar B_{\delta} \setminus \{0\} ~~ \max_K \xi_k \leq C ~~ \text{ with suitable } C=C(K)>0.
\end{eqnarray} 
By well known potential estimates (see \cite{Li} and \cite{BCLT}) we know also that,
$$
    \max_{\p B_{\delta}}\xi_k-\min_{\p B_{\delta}} \xi_k \leq C \quad \mbox{ \  \ } 
$$ for suitable $C=C(\delta)>0$. 

By the convergence properties in \eqref{6.24} and \eqref{6.27} and by recalling \eqref{alpha1}, \eqref{alpha} and \eqref{conv_alpha}, as $ k\to +\infty,$  we have: 
\begin{eqnarray}\label{convergenceW}
W_k \to W_0 \,\,\mbox{uniformly in } \bar B_{\delta}\,\, \mbox{ with } \,\,\, W_0(z):=32 |\hat{a}_0|^{2}e^{-2u_{X}}, 
\end {eqnarray} 
and for any $p\geq1$ we have:
\begin{eqnarray}\label{g_k}
g_k  \to e^{2u_X} f_0:=g_0,  \quad \text{pointwise and in }  L^p(B_{\delta}). 
\end{eqnarray}

In order to obtain Theorem \ref{thm3} and in view of the results in \cite{Tar_2} concerning blow-up points with minimal mass $8\pi,$ we need to concern ourselves only with the case where,
\begin{equation}\label{blowup mass2}  \sigma_0 := \sigma(x_0):=\lim_{r \to 0}\lim_{k\to +\infty} \int_{B(x_{0}; r)} W_ke^{\xi_k} dA = 16 \pi. \end{equation}

In view of \eqref{3.58a'} we know that, if \eqref{blowup mass2} holds then necessarily: $ x_0 \in Z^{(0)},$ that is $\hat{\alpha_0}(x_0)=0$ ( i.e. $W_{0}(0)=0$).

Hence in \eqref{alpha} we have:

\begin{eqnarray}\label{psi}
\hat{a}_{k}(z) = \prod_{j=1}^s (z-\hat{p}_{j,k})^{n_j}\psi_{k} (z)  \,\,\,\, \mbox { and } \,\,\, \hat{a}_{0}(z) = z^{n}\psi_{0}(z)
\end{eqnarray}
with a suitable integer $s\geq1, \,\, n_j\in\NN :  n=\sum_{j=1}^s n_j,$ and also $ \psi_k$ and  $\psi_0$ holomorphic function never vanishing in $\bar B_{\delta}$.
Furthermore, by taking a subsequence if necessary, we have:
\begin{eqnarray}\label{conv_psi}
\hat{p}_{j,k} \to 0 \,\,\, \mbox{ and }
 \psi_{k} \to \psi_0 \,\, \mbox{ uniformly in $\bar B_{\delta}$ } \,\,\, \text{ as } \,\, k\to +\infty.
 \end{eqnarray}
Therefore, for
\begin{eqnarray}\label{conv_pj}
p_{j,k}:= \hat{p}_{j,k} - z_k \to 0 \,\,\,\text{ as } \,\, k\to +\infty,
\end{eqnarray} 
we find:
\begin{eqnarray}\label{W}
W_{k}(z)= (\prod_{j=1}^s |z-p_{j,k}|^{2n_j})h_k(z) e^{-2u_X(z+z_k)},\,\, h_k(z)=32 |\psi_{k} (z+z_k)|^2
\end{eqnarray} 
in $ \bar B_{\delta}.$
In particular, we have: 
$$
    0<b_1\leq h_k(z) \leq b_2, ~ |\n h_k|\leq A \text{ and } h_k\to h_0 := 32|\psi_0|^2  ~ \text{ uniformly in } \bar B_{\delta},
$$ 
with suitable constants $0<b_1\leq b_2$ and $A>0.$
Without loss of generality, we can assume that,
$$
    0\leq |p_{1,k}|\leq |p_{2,k}|\leq \cdots \leq |p_{s,k}| \to 0, \quad \text{ as } k\to +\infty,
$$
and (to simplify notations) we may set,
 \begin{eqnarray}\label{h_0}
    h_0(0)=1.
\end{eqnarray}

\begin{remark}\label{massquantization_local} In view of the above properties we can apply Proposition 2.1 and Theorem 1 of \cite{Tar_1} to the "local" problem \eqref{equation_xi} and conclude the analogous blow-up alternatives and mass "quantization" property  as  stated in Theorem \ref{thm_blow_up_global_from_part_1} for the "global" problem \eqref{6.23}.
\end{remark}

\subsection{The case $s=1$.}

In this case, we know that the blow-up point $x_0$ of $\xi_k$ is not in $Z_0$, and hence blow-up must occur with the "concentration" property (see part (ii)-a of Theorem \ref{thm_blow_up_global_from_part_1}).
In other words, we have:
$$ \max_{K} \xi_k \to -\infty \ \mbox{for any compact } K \subseteq  X \setminus \mathcal{S}. $$
In turn,  by recalling \eqref{translation} and in view of \eqref{blowup mass2},  for any $\delta >0 $ sufficiently small, for the "local" problem \eqref{equation_xi}, we have:
$$
\int_{\Omega_r - z_k}W_k e^{\xi_k} = \int_{B_{\delta}}W_k e^{\xi_k} + o(1) \,\,\, \mbox { as } k \to + \infty,
$$
\begin{eqnarray}\label{W_k}
    W_k e^{\xi_k} \rightharpoonup 16\pi \delta_0, ~~ \text{ weakly in the sense of measures;}
    \end{eqnarray} 
and in particular,
$$\int_{B_{\delta}}W_k e^{\xi_k} \to 16\pi,\,\,\mbox{ as } k \to + \infty.$$ 

As $s=1,$ we may let $p_k:=p_{1,k},$ and (in account of \eqref{blowup mass2}) we see that  necessarily: $n_1=1=n$  (see \cite{Bartolucci_Tarantello}).

Hence, in the given local holomorphic coordinates, we have:
$$
    \widehat{\a}_k=(z-p_k) \psi_k(z) (dz)^2,  \text{ with } p_k\to 0 \text{ as } k\to +\infty,
$$
and
\begin{eqnarray}\label{a_0}
    \widehat \a_0=z \psi_0(z) (dz)^2. 
\end{eqnarray}

Analogously, for $\a\in C_2(X)$, we set: 
\begin{eqnarray}\label{eta}
    \a= a(z)(dz)^2
\end{eqnarray} 
with $a(z)$  holomorphic in $\bar{B_\delta}$.

If  $\alpha \in Q(x_0)$  (i.e. $\a(x_0)=0$), then by Lemma \ref{approssimazione}, it follows (up to subsequences) that, 
\begin{eqnarray}\label{app_alpha1}
\mbox{ there exist} \,\, \alpha_{k} \in Q(x_k) :\,\,\,  \alpha_{k} \to \alpha, \,\,\mbox{ as } k \to \infty.
\end{eqnarray}
In particular, in the given  local holomorphic z-coordinates we have:
\begin{eqnarray}\label{app_alpha2} 
\alpha_{k} = a_{k}(dz)^2, \,\, a_k(z)=(z - z_{k})C_{k}(z) \,\,\mbox{ and }  a(z)=zC(z), \,\,C(0)=a'(0),
\end{eqnarray} 
with $C_{k}(z)\,\,\mbox {and } C(z)$ holomorphic in  $\bar{B_\delta},$  and $C_k \to C$ uniformly in $\bar {B}_\delta,$ as $k \to \infty.$

There holds:
\begin{lemma}\label{lem1}
Assume $s=1$ in \eqref{W}. For  $\a\in C_2(X)$: $\a(x_0)=0,$ let  $\alpha_{k} \in Q(x_k)$ be such that \eqref{app_alpha1} and \eqref{app_alpha2} hold. Then for $r>0$ sufficiently small we have:
\begin{eqnarray}\label{total asymp behaviour}
    \int_{B(x_0; r)} e^{\xi_k} <\a_k\, \, , \,\widehat \a_k > dA= 64\pi a'(0) \overline{ \psi_0(0)} +o(1),~ \text{ as } k\to +\infty.
\end{eqnarray}
\end{lemma} 

 \begin{proof}
     In order to obtain \eqref{total asymp behaviour}, we need to describe the asymptotic profile for $\xi_k$ around the blow-up point $x_0$. For this purpose, in the given local $z$-coordinate, we need to distinguish between the case
     \begin{eqnarray}\label{simple blowup}
        \ov{ \lim}_{k\to +\infty} e^{\xi_k(0)} |p_k|^4<+\infty,\quad \text{ (simple blow-up) }
     \end{eqnarray}
     or
     $$
             \ov{ \lim}_{k\to +\infty} e^{\xi_k(0)} |p_k|^4=+\infty,\quad \text{ (non-simple blow-up) }
     $$ see Kuo-Lin \cite{Kuo_Lin}, Bartolucci-Tarantello \cite{Bartolucci_Tarantello} and Wei-Zhang \cite{Wei_Zhang_2},\cite{Wei_Zhang_1}, \cite{Wei_Zhang_3}.
    Suppose first that \eqref{simple blowup} holds. Then let $\varepsilon_k:=e^{-\frac{\xi_k(0)}{4}} \to 0$ as $k\to +\infty,$ and consider the  scaled sequence: 
$$
\varphi_k(z):=\xi_k( \varepsilon_k z)-\xi_k(0) 
$$ 
satisfying:

$$
\left\{ \begin{array}{ll}
   -\Delta \varphi_k=|z-\frac{p_k}{\varepsilon_k} |^2 h_k(\ve_k z)  e^{-2u_X(\varepsilon_k z + z_k) } e^{\varphi_k} -\varepsilon_k^2 g_k (\varepsilon_k z) & \text{ in } D_{k,\delta}:= B_{\frac{\delta}{\varepsilon_k}} 
       \\ \varphi_k(0)=0 =\max_{D_{k,\delta}} \varphi_k&
\\ \int_{D_{k,\delta}} \left|z-\frac{p_k}{\varepsilon_k}\right|^2  h_k(\varepsilon_k z) e^{\varphi_k}  e^{-2u_X(\ve_kz + z_k) } \frac{i}{2} dz \wedge d\bar z\to 16\pi & \text{  as } k\to +\infty,
\end{array}\right.
$$
In view of \eqref{simple blowup} we know that, $\left| \frac{p_k} {\varepsilon_k} \right| =e^{\frac{\xi_k (0)}{4}}  |p_k| \leq C,$    
         and so we may suppose that, as $k\to +\infty,$ (along a subsequence) we have: 
         \begin{eqnarray*}
  \frac{p_k}{\varepsilon_k} \to p_0, \quad  
 \varphi_k \to \varphi \text{ uniformly in } C^2_{loc}(\RR^2),
         \end{eqnarray*}
         
         (see e.g.  \cite{Brezis_Merle}) with $\varphi$ satisfying: 
         \begin{eqnarray*}
             -\Delta \varphi =|z-p_0|^2 e^\varphi \text{ in } \RR^2 \text{ and } \int_{\RR^2} |z-p_0|^2 e^\varphi  \frac{i}{2}dz \wedge d\bar z \leq 16\pi.             
         \end{eqnarray*}
         We can apply the classification result of \cite{Prajapat_Tarantello} to $\varphi(z+p_0)$ and by using the information that $\varphi(0)=\max\limits_{\RR^2} \varphi=0$, we arrive at the following expression for $\varphi$,
         \begin{eqnarray*}
             \varphi(z)=\ln \frac{1}{\left(1+\frac 1 {32}|z(z-2p_0)|^2\right)^2} \text{ with } \int_{\R^2} |z-p_0|^2 e^\v \frac{i}{2} dz \wedge d\bar z=16\pi.
         \end{eqnarray*}
         Actually when \eqref{simple blowup} holds, we can apply Theorem 5.6.51 of \cite{Tarantello_Book} and obtain (as in \cite{BCLT})  
         \begin{eqnarray}\label{est of varphi_k with model}
             \left| \varphi_k(z) -\ln \frac 1 { \left( 1+\frac 1 {32} |z(z-2p_0)|^2\right)^2} \right| \leq C, \text{ for } \,\, z\in D_{k,\delta}.
     \end{eqnarray}
             As a consequence, by using \eqref{localcoordqdiff}, we have:
            \begin{eqnarray*}
               &&  \int_{B(x_0; r)} e^{\xi_k} <\a_k\, \, , \,\widehat \a_k > dA=
               \\&&=4 \int_{B_\delta} e^{\xi_k} a_{k}(z + z_k) (\bar{z}- \overline{p_k}) \overline{\psi}_k(z+z_k) e^{-2u_X(z+z_k)} \frac{i}{2} dz \wedge d\bar z +o(1)
                \\&& =\;4\int_{B_\delta} e^{\xi_k} z C_{k}(z +z_k)  (\bar{z}-\overline{p_k}) \overline{\psi}_k(z + z_k) e^{-2u_X(z + z_k)} \frac{i}{2} dz \wedge d\bar z + o(1)
                 \\&&=\;4\int_{D_{k,\delta}} e^{\varphi_k(z)} z C_{k}(\varepsilon_k z + z_k)  \left(\bar{z}-\frac{\overline{p_k}}{\varepsilon_k} \right) \overline{\psi_k}(\varepsilon_k z + z_k)  e^{-2u_X(\varepsilon_k z + z_k)} \frac{i}{2} dz \wedge 
                d\bar z + o(1).
             \end{eqnarray*}
             
             In view of \eqref{est of varphi_k with model}, we can use dominated convergence to pass to the limit into the integral sign and obtain:
             {\small \begin{eqnarray*}
           \int_{B(x_0; r)} e^{\xi_k} <\a_k\, \, , \,\widehat \a_k > dA =\left(4 \int_{\RR^2} \frac{z \overline{(z-p_0)} }{\left( 1+\frac 1 {32} |z(z-2p_0)|^2\right)^2}\frac{i}{2} dz \wedge d \bar z\right) a'(0) \overline{\psi_0(0)}  +o(1), 
             \end{eqnarray*}} as $ k\to +\infty$.
           At this point, we can use the change of variable $z=z-p_0$ to find:
             \begin{eqnarray*}
            &&    \int_{\RR^2} \frac{z (\bar{z}-\overline{p_0})} {\left( 1+\frac 1 {32} |z(z-2p_0)|^2\right)^2} \frac{i}{2}dz \wedge d \bar z=\int_{\RR^2} \frac{\bar{z}( z+ p_0) }{\left( 1+\frac 1 {32} |z^2-p_0^2|^2\right)^2} \frac{i}{2} dz \wedge d \bar z  
               \\&=&\int_{\RR^2} \frac{|z|^2}{\left( 1+\frac 1 {32} |z^2-p_0^2|^2\right)^2} \frac{i}{2} dz \wedge d \bar z + p_0 \int_{\RR^2} \frac{\bar{z} }{\left( 1+\frac 1 {32} |z^2-p_0^2|^2\right)^2} \frac{i}{2}dz \wedge d \bar z\\&=&\int_{\R^2}  \frac{|z|^2}{\left(1+\frac 1 {32} |z^2-p_0|^2\right)^2} \frac{i}{2} dz \wedge d \bar{z}, 
             \end{eqnarray*} 
           since by symmetry, the latter integral in the line above vanishes (being the function into the integral sign is odd). Moreover, from \cite{Prajapat_Tarantello} we know that,  
             \begin{eqnarray*}
                     \int_{\R^2}  \frac{|z|^2}{\left(1+\frac 1 {32} |z^2-p_0|^2\right)^2} \frac{i}{2} dz \wedge d \bar{z}  =16\pi,
             \end{eqnarray*} 
             and \eqref{total asymp behaviour} is established in this case.\\
Next assume that, along a subsequence, there holds
             \begin{eqnarray*}
                 \xi_k(0)+ 4\log |p_k| \to +\infty, \text{ as } k\to +\infty,
             \end{eqnarray*}and let,
             \begin{eqnarray}\label{seq pt limit}
                 \frac{p_k}{|p_k|} \to p_0, \text{  as  } k\to +\infty, \text{ with } |p_0|=1.
             \end{eqnarray}
In this situation, we consider the scaled sequence:
             \begin{eqnarray*}
                 \wt{\v}_k(z):=\xi_k(p_k+|p_k| z)+4 \log |p_k|
             \end{eqnarray*} and, by recalling \eqref{conv_pj}, we see that it satisfies:
             \begin{eqnarray*}
\left\{
\begin{array}{l}
{\small -\Delta \wt\v_k(z)=|z|^2 e^{\wt \v_k(z)} h_k(p_k+|p_k|z) e^{-2u_X(\hat{p}_k+|p_k|z)}-|p_k| g_k(p_k+|p_k|z) \; \text{ in } \wt{D_{k,\d}},} \\
\int_{\wt{D_k}} |z|^2 e^{\wt\v_k(z)} h_k(p_k+|p_k|z)e^{-2u_X(\hat{p}_k+|p_k|z)} \frac{i}{2} dz \wedge d\bar z\to 16\pi \qquad \text{ as } k \to+\infty,\\ 
\wt\v_k \left(-\frac{p_k}{|p_k|}\right) \to +\infty 
\qquad\text{ as } k \to+\infty,
\end{array} \right.
\end{eqnarray*}
where $\wt{D_{k,\d}}:=\{z: p_k+|p_k|z \in B_{\d} \},$ for $\d\in(0,r)$.\\
Hence, we can apply the ``non-simple" blow-up analysis of \cite{Kuo_Lin} and \cite{Bar_Tar_JDE} to $\wt\v_k$, and in the account of \eqref{seq pt limit}, we conclude that,
\begin{eqnarray*}
    e^{\wt\v_k(z)} \to 8\pi \d_{p_0}+8\pi\d_{-p_0} \text{ as } k\to +\infty, \text{   weakly in the sense of measure (locally in  } \RR^2).
\end{eqnarray*}Consequently, for $R>>1$ large, we derive that: 
\begin{eqnarray*}
    \int_{\wt{D_{k,\d}} \setminus \{ |z| \leq R \}} |z|^2 e^{\wt\v_k} \frac{i}{2}dz \wedge d\bar z \to 0, \text{ as } k\to +\infty.
\end{eqnarray*}
Thus, still recalling \eqref{conv_pj},  we have:
\begin{eqnarray*}
&& \int_{B(x_0; r)} e^{\xi_k} <\a_k\, \, , \,\widehat \a_k > dA=4\int_{B_\delta} e^{\xi_k} (\bar{z}-\bar{p}_k) \overline{ \psi_k}(z + z_k)zC_{k}(z + z_k) e^{-2u_X(z + z_k)} \frac{i}{2}dz \wedge d\bar z +o(1)
\\&=& 4 \int_{\wt{D_{k,\d}}} e^{\wt\v_k(z)} \bar{z}\left( z+\frac{ p_k}{|p_k|}  \right)  C_{k}( \hat{p}_k+|p_k|z) \overline{\psi_k}(\hat{p}_k+|p_k|z) e^{-2u_X(\hat{p}_k+|p_k|z) }\frac{i}{2} d z \wedge d\bar z + o(1)
\\&=& {\small 4\int_{\{ |z|\leq  R\} } \!\!\!\!\!\!\!\!\!\!\!\!e^{\wt\v_k(z)} \bar{z}\left( z+\frac{ p_k}{|p_k|}  \right)  C(\hat{p}_k+|p_k|z) \overline{\psi_k}(\hat{p}_k+|p_k|z) e^{-2u_X(\hat{p}_k+|p_k|z) } \frac{i}{2} d z \wedge d\bar z+o(1)}
\\&=& 4 (\overline{p_0} (2 p_0) 8\pi) a'(0) \overline{\psi_0}(0)+o(1), \text{ as } k\to +\infty.
    \end{eqnarray*}
    Since $|p_0|=1$, also in this case we conclude that, 
    \begin{eqnarray*}
       \int_{B(x_0; r)} e^{\xi_k} <\a_k\, \, , \,\widehat \a_k > dA=64\pi a'(0) \overline{\psi_0}(0) +o(1), \text{ as } k\to +\infty
    \end{eqnarray*}as claimed.
    
 \end{proof}

\subsection{The case $s\geq 2$.}
In this case, in local $z$-coordinates, $\xi_k$ satisfies:

\begin{eqnarray}\label{2.1}
\left\{
\begin{array}{llll}
    -\Delta \xi_k= \prod_{j=1}^s |z-p_{j,k}|^{2n_j} h_k(z) e^{-2u_X(z + z_k)} e^{\xi_k} -g_k(z)  \text{ in } B_\delta,\\
\\
\int_{B_\delta}  \prod_{j=1}^s |z-p_{j,k}|^{2n_j} h_k(z) e^{-2u_X(z + z_k)} e^{\xi_k} \frac{i}{2} dz \wedge d\bar z \leq C,
\end{array} \right. \end{eqnarray}
where all properties stated above hold. 
Notice now that the ``concentration" property for $\xi_k$ is no longer ensured. But yet, for $\delta \in (0, r)$ sufficiently small there holds:
\begin{eqnarray*}\label{2.1*}
\int_{(\Omega_{r} - z_k)}  \prod_{j=1}^s |z-p_{j,k}|^{2n_j} h_k(z) e^{-2u_X(z + z_k)} e^{\xi_k} \frac{i}{2} dz \wedge d\bar z \\ =  \int_{B_\delta}  \prod_{j=1}^s |z-p_{j,k}|^{2n_j} h_k(z) e^{-2u_X(z + z_k)} e^{\xi_k} \frac{i}{2} dz \wedge d\bar z  +  o_{r}(1)
\end{eqnarray*}
where:
$$ o_{r}(1) \to 0 \,\, \mbox{ as} \,\, r\to 0^+, \,\, \mbox{ uniformly in $k.$} $$ 

Let,
$$
    n:=\sum_{j=1}^s n_j \geq 2 ~~\text{ and } \tau_k:=|p_{s,k}|\to 0^+         ~~\text{ as } k\to +\infty,
$$
and consider the scaled sequence:
$$
    \v_k(z):=\xi_k(\tau_k z) +2(n+1) \ln \tau_k.
$$
By setting,
$$
    q_{j,k}:=\frac{p_{j,k}}{\tau_k},\quad j=1,\cdots, s;
$$ 
we have that, 
\begin{eqnarray}\label{seq convengence}
    0\leq |q_{1,k}|\leq |q_{2,k}|\leq \cdots \leq |q_{s,k}| =1,\text{ with } q_{j,k}\to q_j, \text{ as } k\to +\infty.
\end{eqnarray}for suitable $q_j,j=1,\ldots, s;$ and  the convergence in \eqref{seq convengence} is attained possibly along a subsequence. Thus for,
$$
 h_{1,k}(z):=h_k(\tau_k z) e^{-2u_X(\tau_k z + z_k)}  ~~\text{  and }  g_{1,k}(z):=\tau_k^2  g_k( \tau_k z),
$$
we  have:
\begin{eqnarray}\label{eq of varphi-k}
\left\{
\begin{array}{llll}
-\Delta \v_k =\prod\limits_{j=1}^s |z-q_{j,k}|^{2n_j} h_{1,k}(z) e^{\v_k} -g_{1,k}(z)  & \text{ in } D_{k, \delta}:=B_{\frac{\delta}{\tau_k}}   \\
 \int_{D_{k,\delta}} \prod\limits_{j=1}^s |z-q_{j,k}|^{2n_j}  h_{1,k}( z)e^{\v_k} \frac{i}{2} dz \wedge d\bar z\leq C    & 
\end{array} \right.
\end{eqnarray}
with
$$
 h_{1,k}(z) \to h_0(0)=1\text{ and } g_{1,k} \to 0, \text{ uniformly in } C_{loc}(\RR^2), \text{ as } k \to +\infty.
$$

Let 
\begin{eqnarray}\label{lambda-varphi}
 \lambda_{\varphi}:=\lim_{R\to +\infty} \lim_{k\to +\infty} \int_{B_R} \prod_{j=1}^s |z-q_{j,k}|^{2n_j} h_k(\tau_k z)e^{\v_k} \frac{i}{2} dz \wedge d\bar z \leq \s_0=16\pi
\end{eqnarray}
from \cite{Lee_Lin_Wei_Yang} we know that the following relation holds: 
\begin{eqnarray}\label{s0 identity}
 \s_0^2 -\lambda_\v^2 =8\pi (n+1) (\s_0-\lambda_\v), 
\end{eqnarray}
see also the Appendix in \cite{Tar_1} for a detailed proof.

Hence, from  \eqref{s0 identity} we obtain that,
\begin{eqnarray*}
  &&  \text{either } \lambda_\v=\s_0 \text{ or } \lambda_\v=8\pi (n+1)- \s_0 \in 8\pi \NN\cup \{0\} \\&& \text{ and (since } \l_\v<\s_0) \text{ \  we have: \ } 0\leq \l_\v<4\pi (n+1).
\end{eqnarray*}

In particular, in our case where \eqref{blowup mass2} holds, we find that necessarily,
\begin{eqnarray} \label{two cases} 
&& \text{either }  \lambda_\v=\s_0=16\pi \text{ or } \s_0=16\pi ,\lambda_\v=8\pi  \text{ and } n=s=2 \\ \nonumber && \text{ with }  n_1=n_2=1.
 \end{eqnarray}
It is well known, that (possibly along a subsequence)  $\v_k$ must satisfy one of the following three alternatives (see \cite{Brezis_Merle}): 
\begin{enumerate}
\item[(i)] $\v_k\to -\infty$ uniformly on compact sets.

\item[(ii)] $\v_k\to \v$ uniformly on compact sets with $\v$ satisfying: 

\begin{eqnarray}\label{eq of varphi-limit}
\left\{
\begin{array}{llll}
-\Delta \v =\prod\limits_{j=1}^s |z-q_{j}|^{2n_j}  e^{\v}    ~~~\text{ in } \RR^2 \\
 \int_{\RR^2 } \prod\limits_{j=1}^s |z-q_{j}|^{2n_j}   e^{\v} \frac{i}{2} dz \wedge d\bar z=\l_\v   
\end{array} \right.
\end{eqnarray}

\item[(iii)] $\v_k(0)=\max\limits_{D_{k,\delta}} \v_k \to +\infty.$\\  Since: $ \l_\v \in \{8\pi,16\pi\}$ then the blow-up set $\mathcal{S}_\v$ of $ \v_k$ contains the origin and at most one other point.
\end{enumerate}

We can readily rule out the alternative (i), as it would imply $\l_\v=0$, in contradiction with \eqref{two cases}. 

Next we analyse the case where 
alternative (ii) holds. To this purpose we  recall some relevant properties about solutions of \eqref{eq of varphi-limit}. We know that they play a relevant role in the search of spherical metrics, namely metrics of constant curvature on the Riemann sphere with punctures where we have conical singularities with given angles, see e.g. \cite{Mondello_Panov_1}, \cite{Mondello_Panov_2}, \cite{Mazzeo_Zhu_1} and \cite{Mazzeo_Zhu_2} and references therein.

By Theorem  2 in \cite{Chen_Li_2}, we know that, 
\begin{eqnarray*}
\v(z)=-\frac{\l_\v}{2\pi} \ln |z|+O(1), \text{ as }  |z|\to +\infty,
\end{eqnarray*} and since $\lambda_{\varphi} \in \{8\pi,16\pi\}$ then by  the integrability condition in  \eqref{eq of varphi-limit} we conclude  that necessarily:
\begin{eqnarray}\label{alternative ii}
 \l_\v=16\pi, \  s=n=2 \text{ with } n_1=n_2=1 \text{ and } q_1\neq q_2.
\end{eqnarray}

The latter condition, $q_1\neq q_2$ is ensured by \cite{Prajapat_Tarantello}

\begin{lemma}\label{lem2}
 In case $\varphi_k$ satisfies alternative (ii), then 
 \eqref{alternative ii} holds and for any $\a\in C_2(X)$ with local expression  
\eqref {eta} and any $r>0 $ sufficiently small, we have:
 \begin{eqnarray}\label{alternaive ii asy}
  && \int_{B(x_0; r)} e^{\xi_k} \<\a\, , \wh{\a}_k \> dA = \\
  && \;=\;\frac{1}{\left( W_k\left(\frac{p_{1,k}+p_{2,k}}{2}\right)\right)^{\frac{1}{2}}} \left(-32\pi \frac{(\bar{q}_2- \bar{q}_1)^2}{|q_2-q_1|^2}a(0) \overline{\psi_0}(0) +o(1)\right) + o_{r}(1)  \nonumber
 \end{eqnarray}
as $k\to +\infty.$
\end{lemma}

\begin{proof}
  In view of \eqref{alternative ii}, we compute:
\begin{eqnarray}\label{dominated convergence 2}
&& \int_{B_\d(x_k)} e^{\xi_k} \<\a\,, \wh{\a}_k\> dA =\\ 
&& \;=\;  \notag 4 \int_{B_\d } e^{\xi_k(z)}  (\bar{z}-\bar{p}_{1,k})(\bar{z}-\bar{p}_{2,k}) \overline{\psi_k}(z + z_k)a(z + z_k) e^{-2u_X(z + z_k)} \frac{i}{2} dz \wedge d \bar z  + o_{r}(1) \nonumber\\
&& \;=\; \frac {4}{\tau_k^2 }\int_{D_k} e^{\v_k} (\bar{z}-\bar{q}_{1,k})(\bar{z}-\bar{q}_{2,k}) \overline{\psi}_{k}(\tau_k z + z_k) a(\tau_k z + z_k) e^{-2u_X(\tau_k z + z_k)}  \frac{i}{2}  dz \wedge d \bar z  + o_{r}(1). \nonumber
\end{eqnarray} 
 Moreover, by well-known potential estimates,  (as in Lemma 5.6.52 of \cite{Tarantello_Book}) we  know that,
 \begin{eqnarray*}
  \forall \varepsilon>0, \exists \, C_\ve>0 \text{ and } k_\ve\in \NN: ~~ \varphi_k(z)\leq -\left( \frac{\l_\v}{2\pi}-\ve\right) \ln |z|+C_\ve, 
\end{eqnarray*} 
$\text{ for }  |z|\geq 1\,\text{ and } \, k\geq k_\ve,$
by which we can use dominated convergence in \eqref{dominated convergence 2} and derive, as $k\to +\infty$ 
\begin{eqnarray}\label{R^2}
    && \int_{B_\d } e^{\xi_k} \<\a\, , \wh{\a}_k\> dA =
    \\
    && 
    \;=\;\frac {4}{\tau_k^2 } \left[ \left( \int_{\RR^2} e^\v (\bar{z}-\bar{q}_1)(\bar{z}-\bar{q}_2) \frac{i}{2} dz \wedge d\bar z \right) \overline{\psi}_0(0)a(0)  +o(1) \right] + o_{r}(1).\nonumber
 \end{eqnarray}
 Thus, to conclude \eqref{alternaive ii asy} we prove: 
 
\underline{\textbf{Claim}:} 
 \begin{eqnarray}\label{claim R2}
  \int_{\RR^2} e^\v (z-q_1)(z-q_2)\frac{i}{2} dz \wedge d\bar z =-32\pi \frac{(q_2-q_1)^2}{|q_2-q_1|^4} .
 \end{eqnarray}
To establish the claim, we let $q=(q_1-q_2)/2 \neq 0,$ ( recall \eqref{alternative ii})
, and by using  complex variable, we define:
\begin{eqnarray*}
 \psi(z):=\v\left(qz+\frac{q_1+q_2} 2\right) +6 \ln|q|,
\end{eqnarray*} satisfying:
\begin{equation}\label{R^2-1} 
\left\{
\begin{array}{llll}
-\Delta \psi =|z^2-1|^2 e^\psi ~~~   \text{ in } \RR^2 \\
 \int_{\RR^2 } |z^2-1|^2 e^\psi \frac{i}{2} dz \wedge d\bar z=16\pi.   
\end{array} \right.
\end{equation}
The solutions of (\ref{R^2-1}) have been completely characterized in Appendix 1, by which we can deduce that,

\begin{eqnarray*}
    \int_{\R^2} e^{\psi(z)}(z^2-1) \frac{i}{2}dz \wedge d\bar z=-8\pi.
\end{eqnarray*}
see Lemma \ref{lemmaA.2} in Appendix 1 for details.
Consequently,
\begin{eqnarray*}
        \int_{\R^2} e^{\v(z) }(z-q_1)(z-q_2) \frac{i}{2}dz \wedge d\bar z=\frac{q^2}{|q|^4} \int_{\R^2} e^{\psi} (z^2-1)\frac{i}{2}dz \wedge d\bar z=-8\pi \frac{q^2}{|q|^4},
\end{eqnarray*} 
and by recalling that $q=\frac{q_2-q_1}{2},$ we see that \eqref{claim R2} is established. On the other hand, we find:  $$ \left( W_k\left(\frac{p_{1,k}+p_{2,k}}{2} \right) \right)^{\frac{1}{2}} = \frac{\tau_k^2}{4} | q_{1,k}-q_{2,k} |^2 (1 +o(1))= \frac{\tau_k^2}{4} | q_1 - q_2 |^2 (1 +o(1)),\,\, k\to +\infty  $$ and from 
(\ref{R^2}), (\ref{claim R2}) we readily get (\ref{alternaive ii asy}). 
\end{proof}
\vspace{0.6cm}
Next assume that $\v_k$ satisfies alternative (iii), namely:
\begin{equation}\label{alterniii}
    \v_k(0)=\max_{D_{k,r}} \v_k\to +\infty, \text{ as } k\to +\infty,
\end{equation}
so $0\in \mathcal{S}_\v$ (the blow-up set of $\v_k$), and by Theorem 1 of \cite{Tar_1} (see also Remark \ref{massquantization_local}), the blow-up mass at the origin satisfies the \underline{quantization} property: $\s_\v(0) \in 8\pi\mathbb N.$ Since, $\s_\v(0) \leq \s_0 = 16\pi$, we deduce that,
\begin{eqnarray}\label{8pim}
    \s_\v(0)=8\pi m \text{ with } m\in\{1,2\}.
\end{eqnarray}
\begin{proposition}\label{prop1}
If $\v_k$ blows-up (i.e. (\ref{alterniii}) holds) then:
\begin{enumerate}
\item[(i)]   blow-up occurs with the "concentration" property, namely: 
\begin{equation}\label{concentration} \forall K \Subset \mathbb{R}^2 \setminus \mathcal{S}_{\varphi} \quad  \ max_K \varphi_k \to - \infty \mbox{ \ as } k \to + \infty. \end{equation}

\item[(ii)] If $\lambda_{\varphi} = 16 \pi$  then for $R_0>0$ sufficiently large, there holds: 
\begin{equation}\label{suplog}
sup_{D_{k,\d} \setminus \{ |z|\geq R_0\}} \left(\varphi_k(z) +  2(n+1)\log|z| \right) \leq C_0, \end{equation}
for suitable $C_0 >0.$
\end{enumerate}
\end{proposition}
\begin{proof}
In case we have $\sigma_{\varphi}(0) = 16 \pi$ (recall (\ref{8pim})), then necessarily: $\lambda_\varphi = 16 \pi = \sigma_\varphi(0)$ and so $\mathcal{S}_\varphi = \{0\}.$ Furthermore for given $0 < \epsilon < R$ there holds: 
$$ \int_{\epsilon < |z| < R}  \prod\limits_{j=1}^s |z-q_{j,k}|^{2 n_j} h_{1,k}(z)e^{\varphi_k} \frac{i}{2}dz \wedge  d \bar{z} \to 0  $$
as $k \to + \infty.$ 
Consequently, by a well known Harnack type inequality (see \cite {Brezis_Merle}, \cite{Li_Shafrir}) it follows that, $ max_{2\epsilon \leq  |z| \leq R/2} \, \varphi_k \to -\infty $ as $k \to + \infty$ and (\ref{concentration}) is established in this case. 

Next assume that $\sigma_\varphi(0) = 8 \pi.$ In case $\lambda_\varphi = 8 \pi$, then the "concentration" property would follow exactly as above. Hence assume that $\lambda_\varphi = 16 \pi,$ and argue by contradiction, by assuming that blow-up for the sequence $\v_k$ occurs without the  "concentration" property. Since, $\lambda_\varphi = 16 \pi,$ we see that $\v_k$ cannot admit any other blow-up point beside the origin, i.e. $\mathcal{S}_\v=\{0\}$  and $\s_\v(0)=8\pi.$ Furthermore, (along a subsequence) 
    \begin{eqnarray*}
        \v_k\to \v \text{ uniformly in } C^2_{loc}(\RR^2\setminus\{0\}), 
    \end{eqnarray*}with $\v$ satisfying:
    
\begin{eqnarray*} 
\left\{
\begin{array}{llll}
-\Delta \v =\prod\limits_{j=1}^s |z-q_{j}|^{2n_j}  e^{\v}  +8\pi\d_0 ~~~~ \text{ in } \RR^2 \\
 \int_{\RR^2 } \prod\limits_{j=1}^s |z-q_{j}|^{2n_j}   e^{\v} \frac{i}{2}dz \wedge d\bar z=8\pi.
\end{array} \right.
\end{eqnarray*}
In addition, in this situation the origin must be of "collapsing" type (as otherwise the "concentration" property would hold), and so the origin is the limit point of different $q_{j,k},\,\, j=1,\ldots, s$ as $k \to + \infty.$ But $|q_{s,k}|=1$, and so we find that $s\geq 3$ (and so $n\geq 3$) and there exists $s_1\in \{2,\ldots, s-1\}$ such that $q_1=\cdots=q_{s_1}=0$ while $q_j\neq 0$ for $j=s_1+1,\ldots, s$. So by letting $\bar n:=\sum\limits_{j=1}^{s_1} n_j\geq 2$, we see that the function: 
\begin{eqnarray*}
    \psi(z):=\v(z)+4\ln |z|, 
\end{eqnarray*}extends smoothly at the origin, and it satisfies:
\begin{eqnarray}\label{eqs of psi}
\left\{
\begin{array}{llll}
-\Delta \psi =|z|^{2(\bar n-2)} \left( \prod\limits_{j=s_1+1}^s |z-q_{j}|^{2n_j}  e^{\psi}     \right) ~~~  \text{ in } \RR^2 \\
 \int_{\RR^2 } |z|^{2(\bar n-2)} \left(\prod\limits_{j=s_1+1}^s |z-q_{j}|^{2n_j}   \right) e^\psi \frac{i}{2}dz \wedge d\bar z=8\pi.
\end{array} \right.
\end{eqnarray}
Again, by using the Theorem 2 of \cite{Chen_Li_2}, we obtain that
\begin{eqnarray*}
    \psi(z)=-4\ln |z|+O(1), \text{ as } |z|\to +\infty,
\end{eqnarray*} that is,
\begin{eqnarray*}
    |z|^{2(\bar n-2)}\prod\limits_{j=s_1+1}^s|z-q_j|^{2n_j} e^{\psi(z)} =O( |z|^{2(n-4)}) , \text{ as } |z|\to +\infty,
\end{eqnarray*}with $n\geq 3.$ This is impossible since it violates the integrability property in  \eqref{eqs of psi}. So \eqref{concentration} is established in all cases.
To establish (\ref{suplog}) notice that in the given assumption we have: 
$\lambda_\varphi= \sigma_0 = 16 \pi.$
Consequently for every $\ve>0$, there exists $k_\ve\in \NN$, $R_\ve>1$ and $\d_\ve\in(0,r)$ such that, for every $R\geq R_\ve$, $\d\in (0, \d_\ve)$ and $k\geq k_\ve$ there holds:
\begin{eqnarray}\label{tar_11}
    \int_{D_{k,\d} \setminus \{ |z|\geq R\}}  e^{\v_k} \left(\prod_{j=1}^s |z-q_{j,k}|^{2n_j} \right) h_{1,k}(z) e^{\v_k}\frac{i}{2} dz \wedge d\bar z <\ve.
\end{eqnarray}.

It is a well-known fact, (see e.g. Lemma 3.1 of \cite{Tar_1}) that from \eqref{tar_11}, we readily get (\ref{suplog}).  \end{proof}

\begin{proposition}\label{prop2}
Suppose that $\v_k$ blows-up (i.e. (\ref{alterniii}) holds)  with  $\s_\v(0)=8\pi$, 
\begin{enumerate}
    \item[(i)] If $\l_\v=16\pi$, then $\v_k$ admits exactly one other blow-up point $y_0\neq 0,$ i.e. $\mathcal{S}_\v=\{0,y_0\}$, with corresponding blow-up mass $\s_\v(y_0)=8\pi =\s_\v(0)$. 

Moreover (\ref{suplog}) holds with suitable $\d >0$ sufficiently small and $R>1$ sufficiently large.
\vspace{0.6cm}
    \item[(ii)] If $\l_\v=8\pi$ (i.e. the second alternative in  \eqref{two cases} holds), then $s=n=2$, $n_1=n_2=1$ \mbox{ \ with } $q_1\neq 0$ and  $q_2\neq 0$ . Furthermore  $\mathcal{S}_\v=\{0\},$ and  either \eqref{suplog} holds  or  there exists a sequence $\ve_k\to 0^+$ such that
    $\frac{\tau_k} {\ve_k} \to 0$  as $k\to+\infty,$ and the properties of part (i) hold for the sequence 
    \begin{eqnarray}\label{sequence of tilde varphi}
        \wt\v_k(z)=\xi_k(\ve_kz)+2(n+1) \ln \ve_k.
    \end{eqnarray}
\end{enumerate}
\end{proposition}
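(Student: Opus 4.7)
The plan is to leverage the blow-up classification of Theorem \ref{thm_blow_up_global_from_part_1} (locally, via Remark \ref{massquantization_local}), the $8\pi\mathbb{N}$-quantization of blow-up masses, and the Pohozaev-type identity \eqref{s0 identity}, applied first to the rescaled sequence $\v_k$ and, in the final step of (ii), to a further rescaling.

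For part (i), since $\s_\v(0) = 8\pi < 16\pi = \l_\v$, the concentration property in Proposition \ref{prop1}(i) forces $\v_k \to -\infty$ uniformly on compact sets of $\R^2 \setminus \mathcal{S}_\v$, so the missing $8\pi$ of mass must concentrate at blow-up points distinct from $0$. By the quantization $\s_\v(x) \in 8\pi \mathbb{N}$, this accounts for exactly one extra point $y_0 \neq 0$ with $\s_\v(y_0) = 8\pi$. The estimate \eqref{suplog} is then established as in the proof of Proposition \ref{prop1}(ii): because $\l_\v = \s_0 = 16\pi$, for every $\ve > 0$ one finds $R_\ve, \d_\ve, k_\ve$ so that the integral of $\prod_j |z-q_{j,k}|^{2n_j} h_{1,k} e^{\v_k}$ over $D_{k,\d} \cap \{|z| \geq R\}$ is smaller than $\ve$ for $R \geq R_\ve$, $\d \leq \d_\ve$, $k \geq k_\ve$, and Lemma 3.1 of \cite{Tar_1} then yields \eqref{suplog}.

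For part (ii), the hypothesis $\l_\v = 8\pi$ together with \eqref{two cases} immediately forces $s = n = 2$ and $n_1 = n_2 = 1$. The normalization $|q_{s,k}| = 1$ gives $q_2 \neq 0$. To show $q_1 \neq 0$, I argue by contradiction: if $q_{1,k} \to 0$, then on a small ball around the origin the coefficient in the equation for $\v_k$ factors as $|z - q_{1,k}|^2 \bigl( |z - q_{2,k}|^2 h_{1,k}(z) \bigr)$, with the second factor smooth and bounded away from $0$. Thus locally at $0$ we are reduced to the regime already treated in Lemma \ref{lem1} (one simple zero collapsing to the blow-up point); the collapsing-zero blow-up analysis of \cite{Bartolucci_Tarantello}, \cite{Kuo_Lin}, \cite{Tar_1} then forces $\s_\v(0) = 8\pi(1 + n_1) = 16\pi$, contradicting the assumption $\s_\v(0) = 8\pi$. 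Finally, $\mathcal{S}_\v = \{0\}$ follows because every blow-up point carries mass $\geq 8\pi$ and $\l_\v = 8\pi = \s_\v(0)$ leaves no room for another one.

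The main difficulty, and the heart of part (ii), is the dichotomy between \eqref{suplog} and the existence of a finer scale $\ve_k$. If \eqref{suplog} already holds for $\v_k$ there is nothing further to do. Otherwise there exist points $z_k \in D_{k,\d}$ with $|z_k| \to \infty$ and $\v_k(z_k) + 2(n+1)\log|z_k| \to +\infty$; translating back via the definition of $\v_k$, this says $\xi_k(\tau_k z_k) + 2(n+1)\log(\tau_k|z_k|) \to +\infty$, so a coarser scale than $\tau_k$ is required in order to capture the full original mass $\s_0 = 16\pi$. I would select $\ve_k$ by a ``maximal scale'' construction, taking $\ve_k$ to be essentially the largest $\rho \in (\tau_k, r)$ for which the analogue of \eqref{suplog} for $\xi_k(\rho \cdot) + 2(n+1)\log \rho$ still holds on $\{|z| \geq R_0\}$ with appropriately chosen constants $C_0, R_0$. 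The failure of \eqref{suplog} for $\v_k$ itself ensures $\tau_k/\ve_k \to 0$. At the new scale all rescaled zeros $p_{j,k}/\ve_k$ collapse to $0$, so the coefficient in the equation for $\wt\v_k$ converges locally to $|z|^{2n} h_0(0)$, and $\wt\v_k$ falls under the hypotheses of Theorem \ref{thm_blow_up_global_from_part_1}. Re-running the Pohozaev identity \eqref{s0 identity}, the quantization, and Proposition \ref{prop1} on $\wt\v_k$, together with the fact that by maximality of $\ve_k$ the new captured mass equals $\s_0 = 16\pi$, places $\wt\v_k$ precisely in the regime of part (i), yielding the additional blow-up point $y_0 \neq 0$ with $\s_{\wt\v}(y_0) = 8\pi$ and the estimate \eqref{suplog} for $\wt\v_k$.
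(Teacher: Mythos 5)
Your treatment of part (i) and of the first half of part (ii) — the mass-accounting argument producing exactly one extra point $y_0\neq 0$ with $\s_\v(y_0)=8\pi$, the derivation of \eqref{suplog} from $\l_\v=\s_0=16\pi$ via the tail estimate and Lemma 3.1 of \cite{Tar_1}, the identification $s=n=2$, $n_1=n_2=1$ from \eqref{two cases}, the contradiction argument for $q_1\neq 0$ via the collapsing-zero mass $8\pi(1+n_1)=16\pi$, and $\mathcal{S}_\v=\{0\}$ — coincides with the paper's proof. The gap is in the final dichotomy of part (ii). You correctly extract points $z_k$ with $|z_k|\to+\infty$ and $\xi_k(\tau_kz_k)+2(n+1)\log(\tau_k|z_k|)\to+\infty$ from the failure of \eqref{suplog}, but you then discard them in favour of a ``maximal scale'' $\ve_k$, defined as the largest $\rho$ at which an analogue of \eqref{suplog} still holds, and you assert that maximality forces $\l_{\wt\v}=16\pi$ and hence that $\wt\v_k$ is ``in the regime of part (i)''. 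Neither step is justified: it is unclear that the set of admissible scales is an interval with an attained maximum having the stated property, and — more seriously — even granting $\l_{\wt\v}=16\pi$ you must still exclude $\s_{\wt\v}(0)=16\pi$, which is precisely the situation \eqref{sigma_varphi(0)} that the paper must handle separately by iteration and which is \emph{not} the regime of part (i). Excluding it requires producing a second blow-up point of $\wt\v_k$ at a definite positive distance from the origin; only then do the quantization of Theorem 1 in \cite{Tar_1} and $\l_{\wt\v}\leq\s_0=16\pi$ force $\s_{\wt\v}(0)=8\pi$.

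The paper closes this gap by the choice you passed over: set $\ve_k:=\tau_k|z_k|$, so that $\tau_k/\ve_k=1/|z_k|\to 0$ and $\wt\v_k(z)=\v_k(|z_k|z)+2(n+1)\ln|z_k|$. Then $\wt\v_k(0)=\v_k(0)+2(n+1)\ln|z_k|\to+\infty$ and, crucially, $\wt\v_k(z_k/|z_k|)=\v_k(z_k)+2(n+1)\ln|z_k|\to+\infty$, so $\wt y_0:=\lim z_k/|z_k|$ is a second blow-up point with $|\wt y_0|=1$. Quantization plus $\l_{\wt\v}\leq 16\pi$ then yield $\s_{\wt\v}(0)=\s_{\wt\v}(\wt y_0)=8\pi$, no further blow-up points, $\l_{\wt\v}=16\pi$ and the concentration property — exactly the conclusions of part (i). Replacing your maximal-scale construction by this direct choice makes your argument agree with the paper's.
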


\begin{proof}

To establish (i), we need to show that $\v_k$ admits exactly another blow-up point beside the origin. To this purpose, we argue by contradiction 
and  
assume that $\mathcal{S}_\v=\{0\}$. Then, for $\d>0$ sufficiently small and for 
any $R>>1$ large, we would have:
\begin{eqnarray*}
   && \int_{B_R} \left( \prod_{j=1}^s |z-q_{j,k}|^{2n_j} \right) h_{1,k}(z) e^{\v_k} \frac{i}{2} dz\wedge d\bar z \\&= & \int_{B_\d} \left(\prod_{j=1}^s |z-q_{j,k}|^{2n_j} \right) h_{1,k}(z) e^{\v_k} \frac{i}{2} dz\wedge d\bar z+o(1)
    \\&=&  \s_\v(0)+o(1)=8\pi+o(1) ~~~\text{ as } k\to +\infty \text{ and as } \d\to0^+.
\end{eqnarray*}
As a consequence, (by recalling \eqref{lambda-varphi}) we would have that $\l_\v=8\pi,$ in contradiction with the assumption that $\l_\v=16\pi$. So $\mathcal{S}_\v$ must contain exactly one more blow-up point $y_0\neq 0$, with blow up mass $\s_\v(y_0)=8\pi$ as claimed in (i). 

In order to establish (ii), recall that by assumption:  $\l_\v=\s_\v(0)=8\pi,$ so, by \eqref{two cases}, we have: $s=2=n$ and $n_1=n_2=1,$ and so $|q_2|=1.$ Therefore, for any given $0<\d<R$, there holds:
\begin{eqnarray*}
    \forall \ve>0 \, \exists \, k_\ve\in \NN: \,\int_{B_R\setminus B_\d} \left( \prod_{j=1}^s |z-q_{j,k}|^{2n_j} \right) h_{1,k} (z) e^{\v_k} \frac{i}{2} dz \wedge  d\bar z <\ve, \, \forall k\leq k_\ve  
\end{eqnarray*} and we conclude that, $\mathcal{S}_\v=\{0\}$. 
To show that also $q_1\neq 0$ we argue by contradiction. Indeed, if $q_1=0$ then the origin would be a blow-up point for $\v_k$ corresponding to the limit of an isolated zero of the weight function in equation \eqref{eq of varphi-k}, (a situation analogous to the case $s=1$ discussed above). But from \cite{Bartolucci_Tarantello}, this fact would imply that: $\s_\v(0)=8\pi(n_1+1)=16\pi,$ in contradiction to our assumption. Hence necessarily: $q_1\neq 0$ as claimed.

Next, assume that \eqref{suplog} fails for any  $R>1$ large. By taking into account that 
$S_\varphi = \{0\},$ then we find $y_k \in D_{k,\d}$ satisfying:

\begin{eqnarray*}
    |y_k|\to +\infty,   \  \xi_k(\tau_k y_k) +2(n+1) \ln(\tau_k |y_k| ) \to +\infty, \text{ as } k\to +\infty,
\end{eqnarray*} and in addition,
\begin{eqnarray*}
    \ve_k:=\tau_k|y_k| \to 0 \,\,  \text{ and } \frac{\tau_k}{\ve_k} =\frac 1{|y_k|} \to 0, \text{ as } k\to +\infty.
\end{eqnarray*}
Furthermore, 
$$
    \wt\v_k(z)= \xi_k(\ve_k z) +2(n+1) \ln \ve_k =\v_k(|y_k| z) +2(n+1) \ln |y_k|, 
$$
satisfies: 
$$
\begin{array}{l}
 -\De \wt\v_k= \left( \prod_{j=1}^s \left| z-\frac{q_{j,k}}{|y_k|} \right|^{2n_j}\right) h_k(\ve_k z) e^{-2u_X(\ve_k z)} e^{\wt\v_k}-\ve_k g_k(\ve_k z)   \text{ in } \wt D_{k,\delta} := B_{\frac{\delta}{\ve_k }}\\
\int_{\wt D_{k,\delta}} \prod\limits_{j=1}^s \left( \left| 1-\frac{q_{j,k}}{|y_k|} \right|^2 \right) h_k(\ve_k z) e^{-2u_X(\ve_k z)} e^{\wt\v_k} \frac{i}{2}dz \wedge d\bar z < C.  \\ 
\end{array} $$
We let, 
\begin{eqnarray*}
    \l_{\wt \v} &:=&\lim_{R\to +\infty} \lim_{k\to +\infty} \int_{B_R} \prod\limits_{j=1}^s \left( \left| 1-\frac{q_{j,k}}{|y_k|} \right|^2 \right) h_k(\ve_k z) e^{-2u_X(\ve_k z)} e^{\wt\v_k} \frac{i}{2}dz \wedge d\bar z \\
    &\leq& \s_0=16\pi. 
\end{eqnarray*}
Moreover,
$$\wt\v_k(0) =\v_k(0)+ 2(n+1) \ln |y_k| \to +\infty, \text{ as } k\to +\infty.$$
and so the origin is a blow-up point for $\tilde{\varphi_k}$ of ``collapsing" zeroes, as we have:
$ \wt{q}_{j,k}:=  \frac{q_{j,k}}{|y_k|} \to 0,\text{ as } k\to +\infty, ~~ \forall j=1,\cdots, s.$
 Moreover, 
\begin{eqnarray*}
    \wt\v_k(\frac {y_k}{|y_k|} ) =\v_k(y_k)+ 2(n+1) \log |y_k| \to +\infty ,
\end{eqnarray*}and so, by letting $\wt y_0:= \lim\limits_{k\to +\infty} \frac {y_k}{|y_k|} $ (possibly along a subsequence), we see that  
$\tilde{y_0}$ is  another blow up point for $\wt \v_k$ with $|\tilde{y_0}| =1.$ 
At this point, we can apply Theorem 1 of \cite{Tar_1} to the sequence $\wt{\v}_k$ and obtain the "quantization" property for the blow-up masses relative to those blow-up points. But, recalling that
$\l_{\wt \v} \leq 16\pi,$ 
we deduce that necessarily: $   \s_{\wt \v} (0)=8\pi =\s_{\wt \v}(\wt y_0),$ and no other blow-up point is admissible. 
In conclusion, there holds:
\begin{eqnarray*}
\mathcal{S}_{\wt \v}=\{0, \wt y_0\} \quad \l_{\wt \v}=16\pi \,\, \text {and} \,\,
    \forall \, K\Subset \RR^2\setminus \mathcal{S}_{\wt\v} \implies \max_K \wt\v_k \to -\infty, \text{ as } k\to +\infty.
\end{eqnarray*}

In this way, we have shown that the sequence $\wt{\v}_k$ satisfies  the analogous properties as those of $\v_k$ described in (i), and so for it the same conclusion holds.
\end{proof}

By virtue of Proposition \ref{prop1} and Proposition \ref{prop2}, we can apply to the sequence $\v_k$ and $\wt\v_k$ the blow-up analysis detailed in \cite{Tar_1} concerning the phenomenon of blow-up at "collapsing" zeroes, which was initially investigated in \cite{Lin_Tarantello} and \cite{Lee_Lin_Tarantello_Yang} (see also \cite {Suzuki_Ohtusuka}, \cite{Lee_Lin_Wei_Yang} and \cite{Lee_Lin_Yang_Zhang} for related issues). More importantly, in \cite{Tar_1} we find the point-wise estimates used below, which extend to the "collapsing" case the well known estimates  established in  \cite{Li_Harnack} for the "regular" (non-vanishing) case.

\vspace{.6cm}

We start to discuss the case where:  $\l_\v=16\pi$ and $\s_\v(0)=8\pi$. We set
\begin{eqnarray}\label{W_1 k z}
    W_{1,k}(z):=\left( \prod_{j=1}^s|z-q_{j,k}|^{2n_j} \right) h_{1,k}(z) \to \prod_{j=1}^s  |z-q_j|^{2n_j} =W_1(z),
\end{eqnarray} uniformly as $k\to +\infty$.

From Proposition \ref{prop2}, we have that,
$\mathcal{S}_\v=\{0,y_0\}$ with $y_0\neq  0$. Thus, by taking $\d>0$ sufficiently small we find:
\begin{eqnarray*}
      z_{k}^{(1)}\in B_{\d}(y_0): ~\v_k(z_{k}^{(1)})=\max_{B_\d} \v_k \to +\infty\ ~ \text{ and } \,z_{k}^{(1)}\to y_0  \neq 0, \text{ as }k \to+\infty.
\end{eqnarray*}Furthermore, since both blow-up points of $\v_k$ admit  blow-up mass $8\pi,$ then from Theorem 3 in \cite{Tar_1}, for suitable $\delta_1 > 0$ sufficiently small, we obtain:
\begin{enumerate}
    \item [(a)] $W_{1,k}(0)>0$ (i.e. $q_{j,k}\neq 0$ or 
     $p_{j,k} \neq 0$) and 
    \begin{eqnarray}\label{(a)}
        \v_k(z) =\ln \left( \frac{ e^{\v_k(0)}}{ \left( 1+\frac 1 8 W_{1,k}(0)e^{\v_k(0)}|z|^2\right)^2 }\right) +O(1), \text{ for } |z|<\d_1.
    \end{eqnarray}
    \item [(b)] $W_{1,k}(z_{k}^{(1)})>0$ (i.e. $z_{k}^{(1)}-q_{j,k} \neq 0$ or equivalently $\tau_k z_{k}^{(1)}-p_{j,k} \neq 0$) and 
     \begin{eqnarray}\label{(b)}
        \v_k(z) =\ln \left( \frac{ e^{\v_k(z_{k}^{(1)})}}{ \left( 1+\frac 1 8 W_{1,k}(z_{k}^{(1)})e^{\v_k(z_{k}^{(1)})}|z-z_{k}^{(1)}|^2\right)^2 }\right) +O(1), \text{ for } |z-z_{k}^{(1)}|<\d_1.~~~
    \end{eqnarray} 
    \end{enumerate}

By the ``concentration" property \eqref{concentration}, we see that,
\begin{eqnarray}\label{lambda_k}
    \mu_k:=(W_{1,k}(0))^2 e^{\v_k(0)} \to +\infty, \text{ as } k\to +\infty, 
\end{eqnarray}and by a well-known Harnack type inequality, we know that,
\begin{eqnarray*}
    (W_{1,k}(z_{k}^{(1)}))^2 e^{\v_k(z_{k}^{(1)})} =\mu_k+O(1), \text{ as } k\to +\infty.
\end{eqnarray*}In particular, since $\v_k(0)=\max_{D_{k,\delta}}\v_k,$ we find:
\begin{equation}\label{case i W_k est}
    0< \frac{W_k(0)}{W_k(\tau_k z_{k}^{(1)})} =\frac{W_{1,k}(0)}{W_{1,k}(z_{k}^{(1)})} \leq C \left( \frac{e^{\v_k(z_{k}^{(1)})}}{e^{\v_k(0)}} \right)^{1/2} \leq C.
\end{equation}

In addition we have: $$
    \max_K \v_k =-\ln \mu_k +O(1), \text{ for any compact } K\subset \RR^2 \setminus \mathcal{S}_\v,
$$ and 
\begin{eqnarray*}
    W_{1,k} e^{\v_k} \rightharpoonup 8\pi \d_0+8\pi \d_{y_0}, \text{ weakly in the sense of measures, (locally in } \RR^2)
\end{eqnarray*}
as $k \to + \infty.$

In case $\l_\v=8\pi =\s_\v(0)$, and the second alternative holds in (ii) of Proposition \ref{prop2},  then by letting:
\begin{eqnarray*}
    \wt W_{1,k}(z) =\prod_{j=1}^ s\left| z-\frac{p_{j,k}}{ \ve_k}\right|^{2n_j} h_k( \ve_k z) e^{-2u_X( \ve_k z + z_k)} ,
\end{eqnarray*} we have the analogous of the expression  \eqref{(a)} and \eqref{(b)}  for $\tilde{\varphi_k},$ with $\mathcal{S}_{\tilde{\varphi}} = \{0,\tilde{y_0},\}$ and $\tilde{y_0} \neq 0.$ Hence, $\wt W_{1,k}(0) >0$ (for $k$ large), that is: $p_{j,k}\neq 0$ for all $j=1,\cdots, s,$ and there exists $\wt z_k\in B_\d(\wt y_0)$ satisfying: 
\begin{eqnarray*}
     \wt \v_k(\wt z_k) =\max_{ B_\d(\wt y_0)} \wt \v_k \to +\infty, \,\,  \wt z_k\to \wt y_0 \text{ as } k\to +\infty, 
\end{eqnarray*} with $\wt W_{1,k}(\wt z_k)>0$ (i.e. $\ve_k \wt z_k -p_{j,k} \neq 0$ $\forall j=1,\cdots, s$). In addition,
\begin{eqnarray}\label{case ii W_k est}
    0<\frac{ W_k(0)}{ W_k( \ve_k \wt z_k)} =\frac{ \wt W_{1,k}(0)}{\wt W_{1,k}(  \wt z_k)}\leq C,
\end{eqnarray}for suitable $C>0$. 

Thus, to treat in a unified way the two situations described by (i) or by the second alternative in part  (ii) of Proposition \ref{prop2}, we introduce the following notations:
\begin{eqnarray}\label{Z0}
    Z_k:=\left\{ \begin{array}{rcl}
       \tau_kz_{k}^{(1)}  &  \mbox{ if } \l_\v=16\pi\\
       \ve_k \wt z_k  & \mbox{ \ if \ } \l_\v=8\pi
    \end{array}\right. \to 0 \text{ as } k\to +\infty;   \ \   Z_0:=\left\{ \begin{array}{rcl}
      y_0 \neq 0 &  \mbox{ if } \l_\v=16\pi\\
       \wt y_0  \neq 0  & \mbox{if } \l_\v=8\pi.
    \end{array}\right. 
\end{eqnarray}
In particular, notice that $Z_k \neq 0$ and $\frac{Z_k}{|Z_k|} \to \frac{Z_0}{|Z_0|}$ 
 as $k \to + \infty$ (possibly along a subsequence).  Still, by passing to a subsequence if necessary, we let
\begin{eqnarray}\label{Q_j and l}
    Q_j:=\lim_{k\to +\infty} \frac{\overline{p_{j,k}}}{|p_{j,k}|}, \quad Q_j^* =\lim_{k\to +\infty} \frac{\overline{Z_k -p_{j,k}}}{|Z_k-p_{j,k}|}, 
\end{eqnarray} 
 so that $|Q_j|=1=|Q_j^*|,\quad \forall j=1,\cdots, s$. \\ We set,
$$
    A_0=\prod_{j=1}^s (-Q_j)^{n_j}\quad A_0^*= \prod_{j=1}^s (Q_j^*)^{n_j}\quad A_1^* =\frac{Z_0}{|Z_0|}   \prod_{j=1}^s (Q_j^*)^{n_j}\quad A_2^* =\frac{\overline{Z_0}}{|Z_0|}   \prod_{j=1}^s (Q_j^*)^{n_j},
$$
Since,
$\vert A_0 \vert =\vert A_0^* \vert = \vert A_1^* \vert = \vert A_2^* \vert = 1$ then for any $l\geq 0$ we have: $\vert A_0+lA_0^* \vert^2+\vert lA_1^* \vert^2 \geq \frac{1}{2}$, and consequently: $$( A_0+lA_0^* , 
 lA_1^* )\neq ( 0 , 0 ),\quad \forall l\geq 0.$$  

The following holds:
\begin{lemma}\label{lem3}
    Assume that alternative (iii) holds for $\v_k$ with $\s_\v(0)=8\pi$. Then  there exists $l \geq 0$  such that, for $r>0$ sufficiently small and $\a\in C_2(X)$ with local expression (\ref{eta}) we have: 
    \begin{eqnarray}\label{iii asymptotic behaviour}
        \int_{B(x_0; r))} e^{\xi_k} \<\a\, , \wh \a_k\> dA=
        \end{eqnarray} $$ \frac{32\pi}{(W_k(0))^{1/2}} a(z_k) \overline{\psi_k}(z_k) \left(A_0+l A_0^* + o(1) \right) +\frac{32 \pi |Z_k|}{(W_k(0)^{\frac{1}{2}}} l \left(
\overline{\psi_0}(0) a'(0)  A_1^* + 
\overline{\psi_0'}(0)a(0) A_2^* +o(1) \right)+ o_{r}(1), $$  as $k \to + \infty,$ and $(A_0 + lA_0^*, lA_1^*) \neq (0,0).$   
\end{lemma} 

\begin{proof}
     We set, 
\begin{equation}\label{expansion}
a(z) = a(z_k) + (z - z_k)C_{k}(z), \quad   \psi_k(z) = \psi_k(z_k) +(z - z_k)H_k(z),     
\end{equation} with  $C_{k}(z)$ and $H_k(z)$ holomorphic functions in $\bar{B_\delta}.$ Moreover,\\ $C_{k} \to C$ and  $H_{k} \to H$ uniformly in $\bar{B_\delta}$ as $k\to +\infty,$  and there holds:\\
$a(z) = zC(z), \,\, \psi_0=zH(z), \,\,\mbox{ so that} \,\, C(0) = a'(0),  \, H(0) = \psi_0'(0). $

Assume first that $\l_\v=16\pi$, so (i) of Proposition \ref{prop1} holds. 
In this case, for fixed $R_0 >0 $ sufficiently large, we have: 
$$ \int_{B_{\d}\setminus \{ z: |z|\geq R_0 \tau_k \}} 
e^{\xi_k}  |z|^{2n} \frac{i}{2}dz \wedge d\bar{z}  = \int_{D_{k,\delta}\setminus \{ z: |z|\geq R_0 \}}e^{\varphi_k}  |z|^{2n} \frac{i}{2} dz \wedge d\bar{z}\to 0 $$ as $k \to + \infty.$ Consequently, for given
$0 \leq b \leq n,$ we obtain:
$$
\int_{B_{\d}\setminus \{ z: |z|\geq R_0 \tau_k \}} e^{\xi_k}  (\prod\limits_{j=1}^s |z-p_{j,k}|^{n_j}) |z|^b \frac{i}{2} dz \wedge  d \bar{z} = o(\frac{1}{\tau_k^{n-b}}) $$
as $k \to + \infty.$ 
Therefore, by recalling \eqref{expansion}, we can estimate:
{\footnotesize \begin{eqnarray}\label{palletta}
&& \int_{B(x_0 ; r))} e^{\xi_k} \<\a\, , \wh \a_k\> dA = 4 \int_{B_\delta} e^{\xi_k}\hat{a_k}(z + z_k)a(z + z_k)e^{-2u_X( z + z_k)} \frac{i}{2}dz \wedge d\bar z + o_{r}(1)
= \nonumber\\
&& \;=\; \frac{4}{\tau_k^n}  \left[ \overline{\psi}_k(z_k) a(z_k) \left( \int_{|z| \leq R_0} e^{\varphi_k(z)} \prod\limits_{j=1}^s (\bar{z}-\overline{q_{j,k}} )^{n_j}   e^{-2u_X(\tau_k z + z_k)} \frac{i}{2} dz  \wedge d \bar{z} + o(1) \right) \right]+ \nonumber \\
&& +\; \frac{4}{\tau_k^{n-1}} \left[ \int_{|z| \leq R_0} e^{\varphi_k(z)} \prod\limits_{j=1}^s (\bar{z}-\overline{q_{j,k}} )^{n_j}  \left(z C_k(\tau_kz + z_k) \overline{\psi_k(z_k)} + \bar{z} \overline{H_k}(\tau_kz + z_k)a(z_k)\right) 
 e^{-2u_X(\tau_kz +z_k)}  \frac{i}{2} dz \wedge d \bar{z} + \right.   \nonumber \\
&&\left.+ \,\, o(\frac{1}{\tau_k^{n-1}}) \right] \, + \, o_{r}(1).
 \end{eqnarray}}

To illustrate the origin of \eqref{iii asymptotic behaviour}, let us treat the (simpler) case of blow-up without ``collapsing". As the blow-up mass is $8\pi,$ this requires the "non-vanishing" condition: $W_1(0)=\prod\limits_{j=1}^s |q_j|^{2n_j}>0$ (i.e. $q_j\neq 0,\quad \forall j=1,\ldots, s$) and $W_1(y_0) =\prod\limits_{j=1}^s |y_0-q_j|^{2n_j}>0$ (i.e. $q_j\neq y_0, \quad \forall j=1,\ldots, s$.). Then in \eqref{Q_j and l}, we have: $$Q_j=\frac{\overline{q_j}}{|q_j|} \quad Q_j^* =\frac{\overline{y_0-q_j}}{|y_0-q_j|} \quad j=1,\ldots, s, \,\, \text{ and } \quad e^{\v_k} \to \frac{8\pi }{W_1(0)} \d_0+\frac{8\pi}{W_1(y_0)} \d_{y_0},$$ weakly in the sense of measures (locally in $\RR^2$), as $k \to + \infty.$
Thus, in the "non-collapsing" case, we obtain:
$$
\int_{B(x_0; r))} e^{\xi_k} \<\a\, , \wh \a_k\> dA 
$$
$$  = \frac{32 \pi}{\tau_k^n}  \left[ \overline{\psi_k}(z_k)a(z_k) \left(\frac{A_0}{{W_1(0)}^{\frac{1}{2}}} + \frac{A^*_0}{{W_1(y_0)}^{\frac{1}{2}}} + o(1) \right) \right]  $$
 $$ + \frac{32\pi}{\tau_k^{n-1}} \left[ \overline{\psi_0}(0)a'(0) \frac{A^*_0}{{W_1(y_0)}^{\frac{1}{2}}}y_0 + \overline{\psi_0'}(0)a(0) \frac{A^*_0}{{W_1(y_0)}^{\frac{1}{2}}}\bar{y_0} + o(1) \right]\,\, + \, o_{r}(1), \quad k \to + \infty. 
$$
We define (possibly along a subsequence): 
 \begin{equation}\label{l} l:= lim_{k \to + \infty} \frac{{W_{1,k}(0)}^{\frac{1}{2}}}{{W_{1,k}(z_k)}^{\frac{1}{2}}} \\  = lim_{k \to + \infty} \frac{{W_k(0)}^{\frac{1}{2}}}{{W_k(Z_k)}^{\frac{1}{2}}} \geq 0, \end{equation}
so that, in this case, we have: $ l = \frac{{W_1(0)}^{\frac{1}{2}}}{{W_1(y_0)}^{\frac{1}{2}}}.$

Hence, by observing that,
${{W_k(0)}^{\frac{1}{2}}} = \tau_k^n ({{W_{1,k}(0)}^{\frac{1}{2}}}),$ we derive:

$$
\int_{B(x_0; r)} e^{\xi_k} \<\a\, , \wh \a_k\> dA 
= \frac{32 \pi}{{W_k(0)}^{\frac{1}{2}}}  \left[ \overline{\psi_k}(z_k)a(z_k) \left(A_0 + l A_0^*  +o(1) \right) \right]  
$$
$$ + \frac{32\pi}{{W_k(0)}^{\frac{1}{2}}} l|Z_k| \left[ \overline{\psi_0}(0) a'(0)  A_1^*  + \overline{\psi_0'}(0)a(0)  A_2^* + o(1)  \right] \,\, +\, o_{r}(1), 
$$ as $k \to + \infty,$
and (\ref{iii asymptotic behaviour}) is established in this case.

    Next, let us consider the "collapsing"  case, where  blow-up occurs at an accumulation point of several different zeroes in $\{q_{1,k},\ldots, q_{s,k}\}.$ 
    
    If this occurs at the origin, then by recalling that $| q_{s,k}|=1,$ necessarily:  $s\geq 3,$ and we find $s_1\in \{2, \cdots, s-1\}$ such that (along a subsequence) we have: 
    $q_{j,k} \to 0 \text{ for } j=1,\ldots, s_1,$ while $q_{j,k} \to q_j\neq 0 \text{ for } j=s_{1}+1\ldots, s, \, \text{ as } k\to +\infty.$
    Moreover, we find $s_2\in \{1,\ldots, s_1\}$ such that, as $k\to +\infty,$ we have:
\begin{eqnarray*}
&& \tau_{1,k}:=|q_{1,k}|\to 0, \frac{q_{j,k}}{\tau_{1,k}} \to z_j\neq 0, \text{ for } j=1,\ldots, s_2, \\ &&  \frac{ \tau_{1,k}}{ |q_{j,k}|} \to 0, \text{ for } j=s_2+1,\ldots,s.
\end{eqnarray*}  

Thus, by setting: $ \ve_{j,k}=\frac{\tau_{1,k}}{|q_{j,k}|}$, we find  $\ve_{j,k} \to \frac 1{|z_j|} $ for $j=1,\ldots, s_1, $ while $\ve_{j,k}\to 0$ for $j=s_1+1,\ldots, s,$ as $k\to +\infty$. We define: 
$$
    \phi_k(z) := \v_k(\tau_{1,k} z)+2 \ln \tau_{1,k} +\ln W_{1,k}(0) \text{ and } \d_k:= \tau_k \tau_{1,k} \to 0,
$$ satisfying: 
\begin{eqnarray*}
   &&  -\Delta \phi_k = \prod_{j=1}^s \left|\ve_{j,k} z-\frac{q_{j,k}}{|q_{j,k}|} \right|^{2n_j}h_{1,k} (\tau_{1,k} z) e^{\phi_k}, \text{ for } |z|< \frac{\d_1}{\tau_{1,k}},
    \\&&  \phi_k(0)=\v_k(0)+ 2\ln |q_{1,k}| +\ln W_{1,k}(0) \geq \v_k(0) +2\ln W_{1,k}(0) =\ln\mu_k \to +\infty,
\end{eqnarray*} as $k\to +\infty$ (recall \eqref{lambda_k}), for $\delta_1>0$ sufficiently small. 
Hence, the origin is a ``non-collapsing" blow-up point of $\phi_k$, and  by \cite{Li_Harnack} (or \eqref{(a)}), there holds:
$$
        e^{\phi_k} \to 8\pi \d_0 \text{ weakly in the sense of measures locally in } \RR^2.
$$
and
    $$
        \phi_k(z) =\ln \left( \frac{ e^{\phi_k(0)}}{ \left( 1+\frac {h_{1,k}(0)}{8} e^{\phi_k(0)}|z|^2\right)^2 }\right) +O(1), \text{ for } |z|<\frac{\d_1}{\tau_{1,k}}.
        $$

As a consequence, for any $R >0,$ we have:
    $$
     \int_{R\leq |z| \leq \frac{\d_1}{\tau_{1,k}}} e^{\phi_k}  \prod_{j=1}^s \left| \ve_{j,k} z-\frac{q_{j,k}}{|q_{j,k}|} \right|^{2n_j} h_{1,k}(\tau_{1,k} z) \frac{i}{2} dz  \wedge d\bar z \to 0, 
    $$
 as  $k\to +\infty;$ and we deduce,   
{\footnotesize
     \begin{eqnarray*} && \left| \int_{R\leq |z|\leq \frac{\d_1}{\tau_{1,k}}} e^{\phi_k} \prod_{j=1}^s \left( \ve_{j,k} z - \frac{q_{j,k}}{|q_{j,k}|} \right)^{n_j}  \frac{i}{2}dz \wedge d\bar z \right|
    \\
    && \;\leq\; C  \left( \int_{R\leq |z|\leq \frac{\d_1}{\tau_{1,k}}}  e^{\phi_k}  \prod_{j=1}^s \left| \ve_{j,k} z-\frac{q_{j,k}}{|q_{j,k}|^{2n_j}} \right|^{2n_j} \, \!\!\!\!\! \frac{i}{2} dz \wedge  d \bar{z} \right)^{1/2}   \left( \int_{R\leq |z|\leq \frac{\d_1}{\tau_{1,k}}} e^{\phi_k} \!\frac{i}{2} dz \wedge d \bar{z}  \right) ^{1/2}  \to 0, 
    \end{eqnarray*}}as $k\to +\infty$. 
    
    With this information, we can control the first integral term in \eqref{palletta} as follows: 
    $$ \begin{array}{l}
        \int_{B_{\d_1}} e^{\v_k(z)} \prod_{j=1}^s \left( \bar{z}-\overline{q_{j,k}}\right) ^{n_j}   e^{-2u_X(\tau_k z + z_k)} \frac{i}{2} dz \wedge  d\bar z
        \\ =  \frac 1{ ( W_{1,k}(0))^{1/2}} \int_{|z|\leq \frac{\d_1}{ \tau_{1,k}} }  e^{\phi_k} \prod_{j=1}^s \left( \ve_{j,k} \bar{z}- \overline{\frac{q_{j,k}}{|q_{j,k}|}} \right)^{n_j}  e^{-2u_X(\d_k z + z_k)} \frac{i}{2}  dz  \wedge d\bar z \notag
        \\ =  \frac 1{ ( W_{1,k}(0))^{1/2}}\left( \int_{|z|\leq R }  e^{\phi_k} \prod_{j=1}^s \left( \ve_{j,k} \bar{z}- \overline{\frac{q_{j,k}}{|q_{j,k}|}} \right)^{n_j} 
        e^{-2u_X(\d_k z + z_k)} \frac{i}{2} dz \wedge  d\bar z+o(1)\notag \right)
        \\ =  \frac 1{ ( W_{1,k}(0))^{1/2}} \left( 8\pi \prod 
        (-Q_j)^{n_j}  +o(1) \right)  , 
        \text{ as } k\to 
        +\infty.
  \end{array} $$
  Similarly, concerning the second integral terms in \eqref{palletta}, we get:
{\footnotesize \begin{eqnarray*} 
&&\int_{B_{\delta_1}} e^{\varphi_k(z)} \left( \prod_{j=1}^s \left(  \bar{z}-\overline{q_{j,k}}) \right)^{n_j}   \right) z C_k(\tau_k z + z_k) e^{-2u_X(\tau_k z + z_k)} \frac{i}{2}dz \wedge  d \bar{z} =\\
&& \; =\; \frac{1}{\left(W_{1,k}(0) \right)^{\frac{1}{2}}} \left( \int_{|z| \leq R} e^{\phi_k} \left( \prod_{j=1}^s \left(\varepsilon_{j,k} \bar{z} -\frac{\overline{q_{j,k}}}{|q_{j,k}|} \right)^{n_j} \right)\tau_{1,k}zC_k(\d_k z + z_k)e^{-2u_{X}(\d_k z + z_k)} \frac{i}{2}dz \wedge d \bar{z} +o(1) \right)   \\
&& \;=\; 
o \left( \frac{1}{({W_{1,k}(0))}^{\frac{1}{2}}}  \right), 
\mbox{ \ as $k \to + \infty,$ \ } 
\end{eqnarray*}}
and analogously, 
$$ \int_{B_{\delta_1}} e^{\varphi_k(z)} \left(\prod_{j=1}^s \left(  \bar{z}-\overline{q_{j,k}}) \right)^{n_j}   \right) \bar{z} \overline{H_k}(\tau_k z + z_k) e^{-2u_{X}(\tau_k z + z_k)} \frac{i}{2}dz \wedge  d \bar{z} = o \left( \frac{1}{{(W_{1,k}(0))}^{\frac{1}{2}}}  \right), 
 $$
as  $k \to + \infty.$
\medskip

Next, by assuming that also the blow-up point $y_0$ is of "collapsing" type, then using similar arguments (and notations) for the translated sequence $\varphi_k(z + z_k^{(1)})$ we obtain:
{\footnotesize \begin{eqnarray*} 
&&\int_{|z-z_k^{(1)}| < \delta_1} e^{\varphi_k(z)} \left( \prod_{j=1}^{s} (\bar{z}-\overline{q_{j,k}})^{n_j} \right)  e^{-2u_X(\tau_k z +z_k)}\frac{i}{2}dz \wedge  d \bar{z} =\\
&& \;=\; \int_{|z| < \delta_1} e^{\varphi_k(z + z_k^{(1)})} \left( \prod_{j=1}^{s} (\bar{z} + (\overline{z_k}^{(1)} - \overline{q_{j,k}}))^{n_j} \right)e^{-2u_X( Z_k + z_k + \tau_k z)}\frac{i}{2}dz \wedge d \bar{z} =\\
&& \;=\; \frac{1}{ \left(W_{1,k}(z_k^{(1)})\right)^{\frac{1}{2}} } \left( \int_{|z| \leq R} e^{\phi_k(z)}  \left(\prod_{j=1}^{s} \left(\varepsilon_{j,k} \bar{z} + \frac{\overline{z_k}^{(1)} - \overline{q_{j,k}}}{|z_k^{(1)}-q_{j,k}|} \right)^{n_j} \right)e^{-2u_X(Z_k + z_k + \d_kz)} \frac{i}{2} dz \wedge  d \bar{z} + o(1)  \right) =  
\\
&& \; =\; \frac{8\pi}{ \left(W_{1,k}(z_k)\right)^{\frac{1}{2}} } \left(A_0^* + o(1)\right) =  
 \frac{8\pi}{ \left(W_{1,k}(0)\right)^{\frac{1}{2}} }l \left(A_0^* + o(1)\right), \mbox{as $k \to +\infty$  } 
 \end{eqnarray*}}
(recall that $l$ is defined in ( \ref{l})).
Analogously we have: 
{\footnotesize \begin{eqnarray*}
 && \int_{|z-z_k^{(1)}| < \delta_1} e^{\varphi_k(z)} \left( \prod_{j=1}^{s} (\bar{z}-\overline{q_{j,k}})^{n_j} \right)zC_k(\tau_kz + z_k))  e^{-2u_X(\tau_k z + z_k)}\frac{i}{2}dz \wedge  d \bar{z} =
\\
&& \;=\;
 \int_{|z| < \delta_1} e^{\varphi_k(z+z_k^{(1)})} \left( \prod_{j=1}^{s} (\bar{z}+(\overline{z_k}^{(1)} -\overline{q_{j,k}}))^{n_j} \right) (z_k^{(1)}+z)C(Z_k+z_k+ \tau_k z)e^{-2u_X( Z_k + \tau_k z+z_k)}\frac{i}{2}dz \wedge d \bar{z} =
\\
&&= \frac{1}{ \left(W_{1,k}(z_k^{(1)})\right)^{\frac{1}{2}}} \int_{|z| \leq R}e^{\phi_k(z)} \prod_{j=1}^{s}\left(\varepsilon_{j,k}\bar{z}+\frac{\overline{z_k}^{(1)}-\overline{q_{j,k}}}{|z_k^{(1)}-q_{j,k}|} \right)^{n_j}(z_k^{(1)}+\tau_k z)C_k(Z_k+z_k+\d_k z)e^{-2u_X(Z_k+\d_kz+z_k)}\frac{i}{2} dz \wedge d \bar{z} \\  
 && \, \,  +o(1) \,   \;=\;
\frac{8\pi|z_k^{(1)}|}{ \left(W_{1,k}(z_k^{(1)})\right)^{\frac{1}{2}}} \left( \prod_{j=1}^{s} (Q_j^*)^{n_j} \frac{y_0}{|y_0|}a'(0) +o(1) \right) =  
 \frac{8\pi|z_k^{(1)}|}{ \left(W_{1,k}(0)\right)^{\frac{1}{2}}}l(A_1^*a'(0) + o(1)),
 \end{eqnarray*}}
as $ k \to +\infty.$
\\

Similarly we find:
{\footnotesize $$ 
\int_{|z-z_k^{(1)}| < \delta_1} e^{\varphi_k(z)} \prod_{j=1}^{s}(\bar{z}-\overline{q_{j,k}})^{n_j}  \bar{z} \overline{H_k}(z_k + \tau_k z) e^{-2u_X(z_k + \tau_k z)}\frac{i}{2}dz \wedge d \bar{z} =
\frac{8\pi|z_k^{(1)}|}{ \left(W_{1,k}(0)\right)^{\frac{1}{2}}}l \left(  A_2^* \overline{\psi_0'}(0) + o(1) \right),  $$}
as $k \to + \infty.$
\vskip0.5cm
In case the blow-up point $y_0$ is \underline{not} of "collapsing" type, hence $W_1(y_0) \neq 0,$ then we can use the previous analysis to attain the same conclusion for the above integral terms. However, we see  from  (\ref{l}) that in this case we have: $l = 0.$
Similar considerations apply in case it is the origin to be of non-collapsing type while $y_0$ is a "collapsing" blow-up point.

In either cases, by recalling the concentration property (\ref{concentration}) and by using the "asymptotic" behaviour derived above for the integral terms involved in  (\ref{palletta}), 
we obtain that (\ref{iii asymptotic behaviour}) holds with suitable $l \geq 0,$ when $\lambda_{\varphi} = 16 \pi.$ \vskip0.5cm

Next, assume that $\l_\v=8\pi$ so that the blow-up situation is described by part (ii) of  Proposition \ref{prop2}. In case the second alternative in (ii) holds,  then we can argue exactly as above for the scaled sequence $\wt \v_k$ in \eqref{sequence of tilde varphi}. Therefore on the basis of the notation introduced in \eqref{Z0}, we arrive at the desired conclusion \eqref{iii asymptotic behaviour}, in this case as well. 
\medskip
    Finally, in case the first alternative holds in (ii), then $\varphi_k$ satisfies  the estimate \eqref{(a)} in $B_R$ and we have : $n_1= n_2 = 1 \,\, \text{(i.e. n=2 ) and}\,\, W_1(0) =|q_1|^2|q_2|^2 >0$. In particular, 
    \begin{eqnarray*}
        e^{\v_k} \rightharpoonup \frac{8\pi}{W_1(0)} \d_0, \text{ weakly in the sense of measures (locally in }\R^2),
    \end{eqnarray*}
and there exist suitable constants $C_0 >0$ and $R_0 > 0 $ sufficiently large, such that, 
$$ \frac{1}{C_0}|z|^2 \leq |z-q_{1,k}||z-q_{2,k}| \leq C_0 |z|^2 \quad \text{for} \,\, |z| \geq R_0.$$
Thus, for all $R \geq R_0,$ we can estimate:
$$| \int_{D_{k,\d}\setminus \{ z: |z|\geq R \}} e^{\varphi_k(z)}(z- q_1)(z-q_2)e^{-2u_k(\tau_kz+ z_k)}\frac{i}{2}dz \ \wedge d \bar{z} | \leq $$
$$C \int_{D_{k,\d}\setminus \{ z: |z|\geq R \}}e^{\varphi_k(z)}|z|^2e^{-2u_k(\tau_kz + z_k)}\frac{i}{2}dz \wedge  d \bar{z} \leq  $$
$$\frac{C}{R^2} \int_{D_{k,\d}\setminus \{ z: |z|\geq R \}}e^{\varphi_k(z)} |z|^4 \frac{i}{2}dz \wedge d \bar{z}  \leq  \frac{C}{R^2}.$$

Therefore, for any $R > 0$ sufficiently large, we  find:
{\footnotesize \begin{eqnarray*}
&& \int_{B(x_0; r)}e^{\xi_k} \<\a\, , \widehat{\a_k}\> dA \\&=& \frac {4} {\tau_k^2}  \left( \int_{|z|\leq R} e^{\varphi_k(z)}(\bar{z}-\overline{q_{1,k}})(\bar{z}-\overline{q_{2,k}}) a(\tau_k z + z_k) \overline{\psi_k}(\tau_k z + z_k) e^{-2u_X(\tau_k z + z_k)} \frac{i}{2} dz \wedge d \bar z+O \left(\frac 1 {R^2} \right)\right) + o_{r}(1)
\\&=& \frac{4}{(W_{k}(0))^{\frac 1 2}} \left( 8\pi \frac{\bar{q_1}}{|q_1|}\frac{\bar{q_2}}{|q_2|}a(0)\overline{\psi_k}(0)+o(1)+O \left( \frac {1} {R^2} \right) \right) + o_{r}(1),
    \end{eqnarray*}}  as $k \to + \infty.$ Thus, by letting $ R \to + \infty,$  we may conclude:
    \begin{eqnarray*}
&& \int_{B(x_0; r)} e^{\xi_k} \<\a\, , \widehat{\a_k} \> dA=  \frac{32\pi}{(W_{k}(0))^{\frac 1 2}} \left( A_0 a(0)\overline{\psi_k}(0)
+o(1)\right) + o_{r}(1),
    \end{eqnarray*} as $k\to +\infty$, and in this case, \eqref{iii asymptotic behaviour} holds with $l=0$. \end{proof} 
    \begin{remark}\label{summarize}
    By taking $\theta_{1,k} =\left( W_k(0)\right)^{1/2} = \vert|\hat\alpha_k(x_k)\vert|$  and $\theta_{2,k}=|Z_k| $, we see that
    \begin{eqnarray*}
        0< \theta_{1,k} \leq C\theta_{2,k} \to 0, \text{ as } k\to +\infty ~~( C>0),
    \end{eqnarray*}
    and we can summarize (\ref{iii asymptotic behaviour}) as follows:
    \begin{eqnarray}\label{4.48bis}
        && \int_{B(x_0; r)} e^{\xi_k}  \<\a\,, \wh{\a}_k\> dA  =\\
        && \;=\; \frac 1 {\theta_{1,k}} \left[ a(z_k)(\Lambda_1+ o(1))
       + \theta_{2,k}\Lambda_2( a'(0) +o(1))\right]\,+\, o_r(1) \nonumber
    \end{eqnarray}  as $k\to +\infty,$  with suitable $\Lambda_1$ and $\Lambda_2$ complex numbers  such that $(\Lambda_1,\Lambda_2)\neq (0,0)$.  

Notice in particular that, when $\Lambda_1 \neq 0,$ then the first term in \eqref{4.48bis} becomes the leading term. So in this situation, we can replace (up to infinitesimal order)  $a(z_k)$ with $a(0)$ ( as $z_k\to 0 $)  and englobe also the second term into the infinitesimal term in \eqref{4.48bis}. In other words,
we have:
\begin{equation} \label{alfazero1}
\,\mbox{ if } \Lambda_1 \neq 0 \,\, \mbox{then } \,\, \int_{B(x_0; r)} e^{\xi_k}  \<\a\,, \wh{\a}_k\> dA  = \frac { \Lambda_1}{ \theta_{1,k}}( a(0) +o(1))\,+\, o_r(1). 
\end{equation}
In this way, we see that the expression \eqref{4.48bis}  contains the statement of Lemma \ref{lem2}  once we take:\\
$\theta_{1,k} = \frac {\tau_k^2,}{4} \,\, \theta_{2,k} = \tau_k, \,\, \Lambda_1 = \frac{-32 \pi {(\bar{q_1}- \bar{q_2})}^2}{|q_1-q_2|^4}\overline{\psi}_0(0) \neq 0$ (and $\Lambda_2=0).$

While, if $\Lambda_1 = 0,$ ( and so $\Lambda_2 \neq 0,$), then we take $\alpha (x_0)=0$ and by using the approximation $\alpha_k$ relative to the sequence $x_k \to x_0$ as given in Lemma \ref{approssimazione}  (so that $a_{k}(z_k)=0$) we find:
\begin{equation} \label{alfazero}
\int_{B(x_0; r)} e^{\xi_k}  \<\a_k\,, \wh{\a}_k\> dA  = \frac {\theta_{2,k}}{ \theta_{1,k}} \Lambda_2( a'(0) +o(1))\,+\, o_r(1), \,\,\mbox{ with } \Lambda_2 \neq 0.
\end{equation}
Hence, we see that  \eqref{4.48bis} contains also the statement of Lemma \ref{lem1}, where by hypothesis $\alpha (x_0) =0$ and simply one takes:\\
$\theta_{1,k} = \theta_{2,k} = max\{e^{-\xi_k(0)},|p_k|^4 \},$  $\Lambda_1 =0$ and $\Lambda_2 = 64 \pi \overline{\psi}_0(0) \neq 0.$ 

In other words (\ref{4.48bis}) expresses the asymptotic behaviour of the given integral terms either when $s=1$ or when $s\geq 2$ and either $\varphi_k(0) = O(1)$ or $\varphi_k$ blows-up at the origin with blow-up mass $8\pi.$ \end{remark}

Next by taking further scaling and by means of an induction argument  we show that also in the remaining case:
\begin{equation}\label{sigma_varphi(0)}
\sigma_{\varphi}(0) = 16 \pi = \lambda_{\varphi}
\end{equation}
the general expression (\ref{4.48bis})
remains valid.
Indeed, in case (\ref{sigma_varphi(0)}) holds,  the sequence $\varphi_k$ admits the exact same blow-up character of $\xi_k,$ the sequence we started with. In particular the origin must be a zero of $W_1$ in \eqref{W_1 k z}. 
 
 Since we have $|q_s|=1$, we have indeed improved our situation, since the number of (possible) ``collapsing" zeros for $\varphi_k$  has decreased by at least one unit, and so we can proceed with an induction argument. To this purpose we note that  for suitable $s_1\in \{1,\ldots, s-1\}$ and $\d_1>0$ sufficiently small, we have 
\begin{eqnarray*}
    -\De \v_k= \prod_{j=1}^{s_1} |z-q_{j,k}|^{2n_j} h_k^{(1)}(z) e^{ \v_k} -g_{1,k} (z) \text{ in } \overline{B}_{\d_1},
\end{eqnarray*}
where\begin{eqnarray*}
    0\leq |q_{1,k}| \leq \cdots \leq |q_{s_1,k}| =\tau_k^{(1)} \to 0, \mbox{ \ while $q_{j,k} \to q_j \neq 0, \, \text{ for } j= s_1+1, \ldots s,$} 
\end{eqnarray*} as   $k\to +\infty$.

Thus:
\begin{eqnarray*}
    h_k^{(1)}(z)=\prod_{j=s_1+ 1}^{s} |z-q_{j,k}|^{2n_j}  h_{1,k}(z) \to h_0^{(1)}(z) \quad \text{uniformly in } \,\, \overline{B}_{\d_1},   
\end{eqnarray*}

$0<a\leq h_k^{(1)}(z) \leq b_1$ and $|\n h_k^{(1)} |\leq A_1$ in $\overline{B}_{\d_1}.$
\vskip0.3cm
Furthermore for $N_1:=\sum_{j=1}^{s_1}n_j$, we have:
\begin{eqnarray*}
    W_{1,k}(z):=\prod_{j=1}^{s_1} |z-q_{j,k}|^{2n_j} h_k^{(1)} (z) \to |z|^{2N_1} h_0^{(1)}(z) \text{ as } k\to +\infty, \text{ uniformly in } \overline{B}_{\d_1}.
\end{eqnarray*}

More importantly, by \eqref{sigma_varphi(0)} we get,
\begin{eqnarray*}
&& \lim_{\d\to 0} \lim_{k\to +\infty}    \int_{B_{\d}} W_{1,k}(z) e^{\v_k} \frac{i}{2}dz \wedge d\bar z 
 =\\
 && \;=\;
 \lim_{\d\to 0} \lim_{k\to +\infty} \int_{B_\d} e^{\v_k}\prod_{j=1}^{s_1} |z-q_{j,k}|^{2n_j} h_k^{(1)}(z) e^{\v_k} \frac{i}{2} dz \wedge d\bar z=16\pi.
\end{eqnarray*}

In particular,  \begin{eqnarray*}
        \psi_{1,k}(z):=\prod_{j=s_1+1}^s (z-q_{j,k})^{n_j} \psi_k(\tau_k z) 
        \to \prod_{j=s_1+1}^s  (z-q_j) ^{n_j} \psi_0(0) :=\psi_0^{(1)} (z), 
\end{eqnarray*}
uniformly in $\overline{B}_{\d_1}$  as $k\to +\infty$; with $\psi^{(1)}_0(z)$  holomorphic and never vanishing in  $\overline{B}_{\d_1},$ provided $\delta_1>0$ is sufficiently small.

In case $s_1=1$, then $n=n_1= 1$ and by setting $q_k := q_{1,k},$ we can argue for $\varphi_k$ exactly as in Lemma \ref{lem1} to conclude that, for $\alpha \in C_2(X)$ with $\alpha(x_0) = 0$ we can appeal to the apprioximation \eqref{app_alpha1} and \eqref{app_alpha2} to find,
$$ \int_{|z| \leq R_0} e^{\varphi_k}(\bar{z}-\overline{q_k})\overline{\psi_{1,k}}(z) zC_k(\tau_kz + z_k)e^{-2u_X(\tau_kz + z_k)}\frac{i}{2}dz \wedge d \overline{z} = 16 \pi \overline{\psi_0^{(1)}(0)}a'(0) + o(1)$$
as $k \to +\infty.$
Hence, in view of (\ref{palletta}), in this case we obtain the equivalent of (\ref{total asymp behaviour}) with $\psi_0(0)$ replaced by $\psi_0^{(1)}(0) \neq 0.$

Similarly if $s_1 \geq 2,$ then we use the scaled sequence:
$$
        \v_k^{(1)}(z)=\v_k( \tau^{(1)}_k z) + 2(N_1+1) \ln(\tau_k^{(1)}), $$
and arrive at the appropriate asymptotic behaviour for the integral terms in the left hand side of (\ref{palletta}) either when $\varphi_k^{(1)}(0) = O(1)$ or when $\varphi_k^{(1)}$ blows up at the origin with blow-up mass $8 \pi.$ Thus we may obtain for $\varphi_k^{(1)}$ the analogous statement of Lemma \ref{lem2} and of Lemma \ref{lem3}, respectively.
In the remaining case, where we have blow-up at the origin with blow-up mass $16 \pi,$ we can iterate the arguments above for $\varphi_k^{(1)}$ and continue in this way. Since at each iteration step we decrease by at least one unit the number of "collapsing" zeroes, after finitely many steps,  only one of the three situations described above must occur. 

Thus by combining the results of Lemma \ref{lem1}, Lemma \ref{lem2} and Lemma \ref{lem3} (as we did in Remark \ref{summarize}), by the above iteration argument we arrive at the following conclusion:   

\begin{thm}\label{mainthmasymp}
Let $x_0 \in X$ be a blow-up point for $\xi_k$ with blow-up mass $16 \pi.$ 
There  exist  constants $\Lambda_1$ and $\Lambda_2$ in $\mathbb{C}$ with $(\Lambda_1,\Lambda_2) \neq (0,0)$ and sequences $0 < \theta_{1,k} \leq C \theta_{2,k}$ with $C > 0$ and $\theta_{2,k} \to 0$ as $k \to + \infty $,  such that, for $r> 0$ sufficiently small, the following holds: 

$$
\begin{array}{l}
(i) \ \mbox{If \ $\Lambda_1 \neq 0$ then for every $\alpha \in C_2(X)$ with local expression (\ref{eta}) around $x_0$} \\ \text{we have:}\\
\bigskip
\int_{B(x_0; r)} e^{\xi_k}\<\alpha, , \hat{\alpha_k}\> d A  = \frac{ \Lambda_1}{\theta_{1,k}} \left( a(0)  + o(1) \right) + o_r(1), \,\, k \to + \infty;  \\

(ii) \ \mbox{If \ $\Lambda_1=0$ \,\, (and so $\Lambda_2 \neq 0$) then for every $\alpha \in C_2(X)$ with $\alpha (x_0)=0$}\\ 
\mbox{ and local expression (\ref{eta}) around $x_0$ we have:} \\
\int_{B(x_0; r)} e^{\xi_k}\<\alpha_k ,\hat{\alpha_k}\> d A  = \frac{\theta_{2,k}}{\theta_{1,k}} \Lambda_2 \left( a'(0) + o(1) \right) + o_r(1), \,\, k \to + \infty;
\end{array}
$$
where $\alpha_k $ is given by Lemma \ref{approssimazione} with respect to the sequence $x_k \to x_0,$ and $\alpha_k \to \alpha$ as $k \to + \infty,$ and $ o_r(1)\to 0$ as $ r \to 0^+,$ uniformly in $k.$  
\end{thm}
\begin{remark}\label{molt1}
Theorem \ref{mainthmasymp} extends the asymptotic analysis carried out in \cite{Tar_2}, around a blow-up point  $x_0$ with blow-up mass $8\pi.$ In this situation, one can always ensure that $\Lambda_1 \neq 0$ in the corresponding expansion and the following holds: 
\begin{equation}\label{molt1bis}
\int_{B(x_0; r)} e^{\xi_k}\<\alpha,\hat{\alpha_k}\> d A  = \frac{1}{\theta_{1,k}} \left( a(0) \overline{\Lambda_1} + o(1) \right) + o(1) + o_r(1), \quad k \to + \infty,  
    \end{equation}  
with $\theta_{1,k}=||\hat\alpha_k||(x_k)$
and $\alpha \in C_2(X)$ 
with local expression (\ref{a_0}) around $x_0.$
Note that in this case the sequence $\theta_{1,k}$ does not necessarily converges to zero, as $k \to + \infty$. 
For details see \cite{Tar_2}, where (\ref{molt1bis}) is derived by similar, yet simpler, arguments as those presented above.   
\end{remark}

\section{The proof of Theorem  \ref{thm3}, Theorem  \ref{thmg2} and Theorem \ref{thm1.1}}\label{mainTheorems} 
To prove Theorem \ref{thmg2} 
we recall that a Riemann surface of genus $\gg =2$ is \underline{hyperelliptic}, i.e.
it admits a unique biholomorphic involution:
\begin{eqnarray*}
    j: X\to X
\end{eqnarray*}(cf. \cite{Miranda}, \cite{Griffiths_Harris}),  with
  exactly \underline{six} distinct fixed points, i.e.
    $\{q\in X: j(q)=q\}.$ 

We recall from  \cite{Tar_2} that, for genus $\gg=2$ and $[\b]\neq 0$, the functional $F_t$ is always bounded from below, but possibly not coercive. 
    
    Moreover, the infimum of $F_t$ is $\underline{not}$ attained iff  $\xi_{t_k}$ blows-up, for any given sequence $t_k\to 0^+$, and it admits exactly one blow-up point and the "concentration" property holds. Thus, the blow-up divisor takes the form: $D = q,$ for suitable $q\in X,$ and by Theorem \ref{thmB.3} we have that: $\tau(q) =[\b]_\PP.$ 
    
By using the equivariance of $F_t$ as establish in Appendix 2,  we shall show that the blow-up point $q$ must be one of the fixed points of $j,$ 
i.e. $j(q)=q$, and moreover it is uniquely identified, independently on the choice of the given sequence $t_k\to 0^+$. 

To this purpose, we consider the pullback map:
    \begin{eqnarray*}
        j^*: A^{0,1}(X,E)\to A^{0,1}(X,E),
    \end{eqnarray*} defined in Appendix 2 and used in Proposition \ref{propA.2} to  show the following:  
    
    if $\b_0\in A^{0,1}(X,E)$ is harmonic (with respect to $g_X$) then $j^*(\b_0)=\b_0$.

    Consequently, by using this fact together with the invariance  of the Donaldson functional, as established in Proposition \ref{propA.1} of the Appendix 2, 
we deduce that,
    \begin{eqnarray*}
        F_t(u_t,\eta_t)=F_t(u_t\circ j,j^*(\eta_t)).
    \end{eqnarray*} In other words, $(u_t\circ j,j^*(\eta_t))$ is also a minimizer for $F_t$, and due to the uniqueness of the minimizer we conclude that,
    \begin{eqnarray*}
        u_t=u_t\circ j \text{ and } \eta_t= j^*(\eta_t).
    \end{eqnarray*}Consequently,
    \begin{eqnarray*}
        \a_t=e^{u_t}*_E(\b_0+\bar \p \eta_t)=e^{u_t\circ j}*_E (j^* (\b_0)+\bar\p j^*(\eta_t)) \quad \text{ and } \quad \xi_t=\xi_t\circ j.
    \end{eqnarray*} 
    
    Therefore, if along a sequence $t_k\to 0^+$, we have that $\xi_{t_k}$ blows-up at the point $q$, then by the above identity, we see that $\xi_{t_k}$ blows-up also at $j(q),$ and by the uniqueness of the blow-up point, we conclude that $j(q)=q,$ as claimed. 
    On the other hand, for genus $\gg=2$ the Kodaira map is two-to-one (see \cite{Tar_2}) in the sense that,  $\tau(q_1) = \tau(q_2)$
    iff $q_1 =q_2$ or $q_1 = j(q_2).$ Therefore $\tau^{-1}([\beta]_{\PP})=\{q \},$ hence the blow-up point is uniquely identified independently on the chosen sequence $t_k\to 0^+.$
   
With this information, part (i) of Theorem   \ref{thmg2}  follow immediately. While part (ii) is a direct consequence of Theorem \ref{thmB.1}.

\vskip0.5cm 
At this point we focus on the proof of Theorem \ref{thm3} and Theorem \ref{thm1.1}.

Thus, from now on we assume that $\xi_k$ blows up with corresponding blow-up divisor: 
\begin{equation}\label{divisor}
D = \sum_{j=1}^{m} n_j x_j, \mbox{ \ and } n_j \in \{1,2\},
\end{equation}
so that, 
$$
\mathcal{S} = \{x_1,\ldots,x_m\} \quad \mbox{and} \quad   0 < deg(D) \leq \gg -1.
$$
We wish to prove the orthogonality relation (\ref{ortog}) for a suitable effective divisor $0<\tilde{D}\leq D.$ 

To this purpose, we let $r>0$ sufficiently small, and for each point $x_l \in \mathcal{S}$
we let: 
$$
x_{k,l} \in \overline{B_{\delta}(x_l)} : \xi_k(x_{k,l}) = \text{max}_{\overline{B_{\delta}(x_l)}} \xi_k \to + \infty \ \mbox{and} \ x_{k,l} \to x_l \ \mbox{as $k \to + \infty$}.  $$

We introduce local holomorphic coordinates at $x_{l}$ centred at the origin, defined in $B(x_l; r)$, and denote by $z_{k,l} \to 0 $ as $k \to + \infty$, the corresponding local expression of $x_{k,l}.$ \\
As already observed in \eqref{betawedge}, for every $ \alpha \in C_2(X)$ we have:
$$
\int_X \beta \wedge \alpha = e^{\frac{-s_k}{2}}\left( \sum_{l=1}^{m} \int_{B(x_l, r)} e^{\xi_k}\<\alpha , \hat{\alpha_k}\> d A  \right) +o(1), $$
as $k \to + \infty.$

\vskip0.5cm
Under the assumption (\ref{divisor}), we can use Theorem \ref{mainthmasymp} and Remark \ref{molt1} around each blow-up point $x_l\ \in \mathcal{S}.$     
More precisely, with the notation of Theorem \ref{mainthmasymp}, we  define: 
$$ \mathcal{S}_1 :=  \{ x \in \mathcal{S} : \mbox {$D$ at $x$ has multiplicity $1$ or multiplicity $2$ and $\Lambda_1 \neq 0$} \}, $$ $\mathcal{S}_2 := \mathcal{S} \setminus \mathcal{S}_1.$ \\
Without loss of generality, we shall assume that both set $\mathcal{S}_1 \,\,\text{and} \,\, \mathcal{S}_2$ are \underline{not empty,} since for $\mathcal{S}_1 \,\,\text{or} \,\, \mathcal{S}_2$ empty, then the desired conclusion would follow by analogous yet simpler arguments.\\

Thus, for every $l\in \{1,...,m\}$ we let,
$$ \Gamma_l^{(1)} := \Lambda_1 \neq 0 \mbox{ \ if $x_l \in \mathcal{S}_1, \ \Gamma_l^{(2)} := \Lambda_2 \neq 0$ if $x_l \in \mathcal{S}_2,$   } $$
and 
$$
m^{(1)}_{j,k} := \frac{e^{\frac{-s_k}{2}}}{\theta_{1,k}} \ \mbox{if} \,\, x_l \in \mathcal{S}_1, 
\,\,  \,\,    
m^{(2)}_{j,k} := \frac{e^{\frac{-s_k}{2}} \theta_{2,k}}{\theta_{1,k}} \ \mbox{if} \, \,  x_l \in \mathcal{S}_2.
$$

We consider the divisor $\hat{D}_k =  \sum_{l=1}^{m} x_{k,l} \,\, \mbox { and } \,\, \hat{D} =  \sum_{l=1}^{m} x_{l}$ and in view of Lemma \ref{approssimazione}, for every $\alpha \in Q (\hat{D})$ we find a sequence $\alpha_k \in Q (\hat{D}_k)$ such that $\alpha_k \to \alpha$ as $k \to + \infty.$

Around  each blow-up point $x_l \in \mathcal{S} , \,\, l=1, ..., m,$ we consider the local expression (\ref{eta}) for $\alpha$ and $\alpha_k$ and set: \\
$\a= a_l(z)(dz)^2$ and $\a_k= a_{k,l}(z)(dz)^2$ with $a_l(z)$ and $a_{k,l}(z)$ holomorphic around the origin. 
Moreover, if $x_l \in \mathcal{S}_2 $ then we have: $a_{k,l} (z_{k,l} )=0$ and $a_l(0)=0.$ 
  
Thus, according to (\ref{betawedge}) and by using Theorem \ref{mainthmasymp},  Remark \ref{summarize}, Remark \ref{molt1} and once we let $r \to 0^+,$   we derive as $k \to + \infty$, 
\begin{eqnarray}\label{sum}
\int_X \beta \wedge \alpha = \int_X \beta \wedge \alpha_k  \,+  o(1) \, = \\
= \left( \sum_{x_l \in \mathcal{S}_1}  m^{(1)}_{l,k} \Gamma_l^{(1)}  \left( a_{l}(0) + o(1) \right) + \sum_{x_l \in \mathcal{S}_2}  m^{(2)}_{l,k} \Gamma_l^{(2)}\left( a'_{l}(0) + o(1) \right) \right) + o(1), 
\end{eqnarray}

We let $j_0 \in \{1,...,m\}$ such that (up to subsequences):
$|m^{(1)}_{j,k}| \leq |m^{(1)}_{j_0,k}|$ for all $ j\in \{1,...,m\} \text{ with } x_{j} \in \mathcal{S}_1 $
and $ k \in \mathbb{N}$ sufficiently large. 
Therefore, $x_{j_0} \in \mathcal{S}_1$ and if we let $\alpha_{j_0}^{(1)}$ the holomorphic quadratic differential given by Lemma \ref{zeros} (with $D$ in \eqref{divisor}) we find: 
\begin{equation}\label{bound1}
\int_X \beta \wedge \alpha_{j_0}^{(1)} =  m^{(1)}_{j_0,k}  \left(a_{j_0}(0) \Gamma_{j_0}^{(1)} + o(1) \right)  + o(1). \end{equation}
Since the coefficient $a_{j_0}(0) \Gamma_{j_0}^{(1)} \neq 0,$ then  from \eqref{bound1} we find that the sequence $m^{(1)}_{j_0,k}$ is uniformly bounded in $k$.  As a consequence, all sequences $m^{(1)}_{j,k}$, are uniformly bounded.

Similarly, we let $j_1 \in \{1,...,m\}$ such that (up to subsequences), we have: 
$|m^{(2)}_{j,k}| \leq |m^{(2)}_{j_1,k}|$ for all $ j\in \{1,...,m\}  : x_{j} \in \mathcal{S}_2 $
and $ k \in \mathbb{N}$ sufficiently large.

Then $x_{j_1} \in \mathcal{S}_2,$ and we can consider $\alpha_{j_1}^{(2)}$ the holomorphic quadratic differential as given in part (ii) of  Lemma \ref{zeros} and  obtain:
$$
\int_X \beta \wedge \alpha_{j_0}^{(2)} =  m^{(2)}_{j_0,k} \left(a_{j_0}'(0) \Gamma_{j_0}^{(2)} + o(1) \right)  + o(1). $$
and as above we conclude that also in this case  all sequences $m^{(2)}_{j,k}$, are uniformly bounded.

With those uniform bounds available, the orthogonality formula (\ref{ortog}) readily follows from \eqref{sum} with $\tilde{D} =( \sum_{x_j \in \mathcal{S}_1}  x_j + \sum_{x_j \in \mathcal{S}_2} 2x_j) \leq  D.$ 
\vskip0.5cm

Clearly, from the above results we derive Theorem  \ref{thm3} and Theorem \ref{thm1.1} easily. 
\vskip0.5cm

In concluding, we point out that the orthogonality condition enjoyed by the blow-up divisor D can be used to provide a simple expression for the linear form: $\alpha \rightarrow \int_X \beta \wedge \alpha.$

For this purpose, we let  $\alpha_{j}^{(1)}$
and $\alpha_j^{(2)}$ the differentials given by Lemma \ref{zeros} with respect to the blow-up divisor $D.$
Given $x_j \in \mathcal{S}_1$ we let: $c_j^{(1)} :=  \int_X \beta \wedge \alpha_{j}^{(1)}$
and $c_j^{(2)} := 0,$ while 
 for  $x_j \in \mathcal{S}_2$ we let $c_j^{(1)} := \int_X \beta \wedge \alpha_{j}^{(1)}$
and $c_j^{(2)} :=  \int_X \beta \wedge \alpha_{j}^{(2)}.$ 
The orthogonality relation (\ref{ortog}) (valid  for  $D$) implies that the complex numbers $c_j^{(1)} \, \text{and} \, c_j^{(2)}$ are well defined independently of the chosen differentials $\alpha_j^{(1)} \, \text{and} \,\alpha_j^{(2)}.$

Furthermore, for all $ \alpha \in C_2(X),$  we can use the expansion (\ref{decom}), so that  by virtue of the orthogonality relation (\ref{ortog}) we find: 
\begin{equation}\label{sommapesata}
\int_X \beta \wedge \alpha = \sum_{j=1}^{m}  \left(c_j^{(1)}\frac{ a_{j}(0)}{a_{j}^{(1)}(0)} +  c_j^{(2)}\frac{a_{j}'(0)}{(a^{(2)}_j)'(0)}  \right), 
\end{equation}
in terms of  the local expression specified in (\ref{local}) around each point $x_j.$ \\
Observe that, since $\beta \neq 0,$
the numbers $c_j^{(1)}\, \text{and} \,$ $c_j^{(2)}$ are not all zero.\\

We can also express formula (\ref{sommapesata}) in a coordinate free way as follows:
\begin{proposition}
Under the assumptions above, there exist unique $b_j^{(1)} \in (K_X^2)_{x_j},$ and $b_j^{(2)} \in (K_X^3)_{x_j}$ not all zero such that:
$$
\begin{array}{l}
\int_X \beta \wedge \alpha = \sum_{j=1}^{m} \left(<\alpha(x_j),b_j^{(1)}>_{x_j} + <d_{x_j} \alpha, b_j^{(2)}>_{x_j} \right) \\  
\mbox{\ where  $< , >_{x_j}$ is the hermitian product induced by the hyperbolic metric $g_X$} \\
\mbox{on $(K_X^2)_{x_j}$ and $(K_X^3)_{x_j}$ respectively}. \end{array}  
$$
\end{proposition}
\begin{proof}
It suffices to observe that, in view of formula \eqref{sommapesata}  where $\alpha \in C_2(X)$ admits the local expression specified above around $x_j$ for $1 \leq j \leq m,$ we have: 
$$ c_j^{(1)}\frac{ a_{j}(0)}{a_{j}^{(1)}(0)}  = <\alpha(x_j),b_j^{(1)}>_{x_j}\quad \text{with $b_j^{(1)} = \overline{ \left(\frac{c_j^{(1)}}{a_{j}^{(1)}(0)} \right)}dz^2_{0},$}
$$
 and 
$$ c_j^{(2)}\frac{a_{j}'(0)}{(a^{(2)}_j)'(0)}  = <d_{x_j} \alpha(x_j),b_j^{(2)}>_{x_j} \quad \text{with $b_j^{(2)} = \overline{\left(\frac{c_j^{(2)}}{(a^{(2)}_j)'(0)}\right )}dz^3_{0}.$}
$$
The uniqueness follows by using the formulae above for   $\alpha_{j}^{(1)}$ and $\alpha_j^{(2)}.$  
\end{proof}

\section{APPENDIX }\label{Append1}
In this section we prove some useful facts needed above and of independent interest.

\subsection{Appendix 1}\label{appendix 1}

Throughout this section we use complex coordinates.
\begin{lemma}
    Let $\psi:=\psi(z)$ satisfy:
    \begin{eqnarray}\label{A.1eq}
\left\{
\begin{array}{llll}
-\De \psi =|z^2-1|^2 e^\psi ~~~   \text{ in } \RR^2 \\
 \int_{\RR^2 } |z^2-1|^2 e^\psi \frac{i}{2}dz \wedge d \bar{z} =16\pi.   
\end{array} \right.
\end{eqnarray}then,
\begin{eqnarray}\label{A.2}
    \psi(z)=\ln \frac{8|A|^2}{(|z|^2+|Az^2+ Bz+A|^2)^2},
\end{eqnarray}for any $A,B\in \CC$ with $A\neq 0$.  In particular,  \begin{eqnarray}
    \label{A.3}\psi(\frac 1 z) = \psi(z)+ 8\ln |z|.
\end{eqnarray}
\end{lemma}

\begin{proof}
    We notice that the function  $\v(z):=\psi(z)+2\ln|z^2-1|$ satisfies:
     $$
\left\{
\begin{array}{llll}
-\De \v =  e^\v-4\pi \d_{p=1}-4\pi\d_{p=-1}  ~~~   \text{ in } \RR^2 \\
 \int_{\RR^2 }   e^\v \frac{i}{2}dz \wedge d \bar{z}=16\pi.   
\end{array} \right.
$$ Hence, by Liouville formula \cite{Liouville}, we know that
\begin{eqnarray}\label{A.4}
    \v(z)=\ln \frac{8|f'(z)|^2}{(1+|f(z)|^2)^2}
\end{eqnarray}with $f$ a rational function on $\CC$ with critical points exactly $\pm 1$ of multiplicity $1$. Moreover, the integral condition in \eqref{A.1eq} implies that  $f$ must have degree 2. Moreover, recall that \eqref{A.4} is invariant if we replace  $f$ with the function: $ \frac {\gamma f + e}{ -\overline {e} f + \overline {\gamma}} \,\,$ for given $\gamma, e \in \mathbb{C}$ satisfying: $|\gamma|^{2} + |e|^{2} = 1.$ Therefore, without loss of generality we can take: $f(z)=\frac{az^2+ bz+c}{\a z+\beta}, $ with $a\neq 0$ and $\a \neq 0$.

By direct calculations, we see that $f'$ admits simple zeroes at $\pm 1$, if and only if $\beta=0$ and $a=c$. Namely, by setting $A:=\frac a {\alpha} \neq 0$, $B:=\frac b {\alpha} \in \CC$. We find:  $f(z)=A(z+\frac 1 z) +B$. Clearly, $f(z)=f(\frac 1 z)$, which should not be surprising, since all the critical points of $f$ lie on the unit circle, and so (up to equivalence) $f$ must be symmetric with respect to such circle.

Consequently, $f'(z)= A\left(1-\frac 1 {z^2}\right)$ and therefore:
\begin{eqnarray*}
   \psi(z)+ 2\ln|1-z^2|=\v(z) &=&\ln \left( \frac{ 8|A|^2 |z^2-1|^2}{|z|^4 (1+ |A(z+\frac 1 z)+B|^2)^2 } \right) \\& =& \ln \left( \frac{8|A|^2 |z^2-1|^2}{|z|^4 (1+ |A(z+\frac 1 z)+B|^2)^2 }\right) ,
\end{eqnarray*}and we deduce that $\psi$ is given by \eqref{A.2}, from which \eqref{A.3} can be easily checked.

\end{proof}

\begin{lemma}\label{lemmaA.2}
    Let $\psi$ satisfy \eqref{A.1eq}, then
    $$
    \int_{\RR^2} e^{\psi(z)} (z^2-1) \frac{i}{2}dz \wedge d \bar{z}=-8\pi.
            $$
\end{lemma}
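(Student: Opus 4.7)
The plan is to exploit the inversion symmetry $\psi(1/z) = \psi(z) + 8\ln|z|$ from \eqref{A.3}, combined with the normalization $\int_{\RR^2} |z^2-1|^2 e^\psi \frac{i}{2}dz\wedge d\bar z = 16\pi$ in \eqref{A.1eq}. Set $I := \int_{\RR^2} e^{\psi(z)}(z^2-1)\frac{i}{2}dz \wedge d\bar z$; the explicit formula \eqref{A.2} gives $e^{\psi(z)}=O(|z|^{-8})$ as $|z|\to \infty$, so all integrals appearing below are absolutely convergent and the substitution $z \mapsto 1/z$ is justified on $\RR^2 \setminus \{0\}$.

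First, I would carry out the change of variables $w = 1/z$ in $I$. Using $dz \wedge d\bar z = |w|^{-4} dw \wedge d\bar w$, $z^2 - 1 = (1-w^2)/w^2$, $e^{\psi(1/w)} = |w|^8 e^{\psi(w)}$ (obtained by rewriting \eqref{A.3}), and the elementary identity $|w|^4/w^2 = \bar w^2$, a direct computation yields
\[ I \;=\; \int_{\RR^2} e^{\psi(w)}\bigl(\bar w^2 - |w|^4\bigr)\,\tfrac{i}{2}\,dw \wedge d\bar w. \]
Equating this with the original expression $I = \int e^\psi(z^2-1)\,dA$, and recalling that $e^\psi$ and $|z|^4$ are real-valued, separation into real and imaginary parts produces the two identities
\[ \int_{\RR^2} e^{\psi}\,dA \;=\; \int_{\RR^2} e^{\psi}|z|^4\,dA, \qquad \int_{\RR^2} e^{\psi} z^2\,dA \;=\; \int_{\RR^2} e^\psi \bar z^2\,dA \;\in\; \RR. \]

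To conclude, I would expand $|z^2-1|^2 = |z|^4 - z^2 - \bar z^2 + 1$ inside the normalization integral. Writing $K := \int e^\psi\,dA$, $L := \int e^\psi |z|^4\,dA$, and $J := \int e^\psi z^2\,dA$, the normalization becomes $L - J - \bar J + K = 16\pi$, which by the identities above collapses to $2K - 2J = 16\pi$, i.e.\ $-2I = 16\pi$, giving $I = -8\pi$ as required.

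The argument is essentially algebraic and sidesteps any direct evaluation of the explicit family \eqref{A.2}; the only delicate point is the correct bookkeeping of the Jacobian and the factor $|w|^8$ coming from $e^{\psi(1/w)}$ under the inversion, but no genuine obstacle arises.
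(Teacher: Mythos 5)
Your argument is correct, and it takes a genuinely different route from the paper's. Both proofs rest on the inversion identity \eqref{A.3}, but the paper uses it only to show that $\int e^{\psi}z^{2}\,dA$ is real, and then computes the real part of the integral via the Pohozaev-type identity of Theorem 2 in \cite{Chen_Li_1}: with $R(x,y)=|z^{2}-1|^{2}$ and $\beta=8$ one has $\int (xR_{x}+yR_{y})e^{\psi}=\pi\beta(\beta-4)$, which combined with $xR_{x}+yR_{y}=4(R+\mathrm{Re}(z^{2}-1))$ yields the value $-8\pi$. You instead extract from the single substitution $z\mapsto 1/w$ the two identities $\int e^{\psi}\,dA=\int e^{\psi}|z|^{4}\,dA$ and $\int e^{\psi}z^{2}\,dA\in\RR$, and then simply expand $|z^{2}-1|^{2}=|z|^{4}-z^{2}-\bar z^{2}+1$ in the normalization condition to get $-2I=16\pi$. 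Your computation of the pulled-back integrand ($e^{\psi(1/w)}=|w|^{8}e^{\psi(w)}$, Jacobian $|w|^{-4}$, and $|w|^{4}/w^{2}=\bar w^{2}$) is correct, and the decay $e^{\psi}=O(|z|^{-8})$ from \eqref{A.2} justifies all the manipulations. What your approach buys is self-containedness: it avoids invoking the external Pohozaev identity and reduces the lemma to pure bookkeeping with the symmetry and the mass normalization. What the paper's approach buys is a template that, for the real part, does not depend on the exact inversion symmetry of the weight and so is closer to the general singular-Liouville machinery used elsewhere in the paper; but as a proof of this specific lemma, yours is shorter and equally rigorous.
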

\begin{proof}
    Let us prove first that the given integral is real, in the sense that it admits a vanishing imaginary part.

    By virtue of \eqref{A.3}, we use the change of variable $z=\frac 1 w$ to find:
     \begin{eqnarray*}
       \int_{\R^2} e^{\psi(z)} z^2 \frac{i}{2}dz \wedge d \bar{z}&=&\int_{\R^2} e^{\psi(\frac 1 w)} \frac 1 {w^2} \frac{1}{|w|^4} \frac{i}{2}dw \wedge d \bar{w}=  \int_{\R^2} e^{\psi(w)} \frac 1 {w^2} |w|^4 \frac{i}{2}dw \wedge d \bar{w}=\\
       &=&\int_{\R^2} e^{\psi(w)} \bar w^2 \frac{i}{2}dw \wedge d \bar{w}.
    \end{eqnarray*}  That is, $\int_{\R^2} e^{\psi(z)} z^2 \frac{i}{2}dz \wedge d \bar{z}$ coincides with its complex conjugate, and so, 
    \begin{eqnarray*}
        \text{Im}\left(\int_{\R^2} e^{\psi(z)} z^2 \frac{i}{2}dz \wedge d \bar{z}\right) =0.
    \end{eqnarray*} As a consequence we also deduce that: 
     $\text{Im}\left(\int_{\R^2} e^{\psi(z)} (z^2-1) \frac{i}{2}dz \wedge d \bar{z}\right) =0.$ 
    
    Next, by setting $z:=x+i y$, in cartesian coordinate we find:
    \begin{eqnarray*}
        \text{Re} \left(\int_{\R^2} e^{\psi(z)} (z^2-1) \frac{i}{2}dz \wedge d \bar{z} \right) = \int_{\R^2} e^{\psi}(x^2-1-y^2) dxdy.
    \end{eqnarray*}
So by setting:  $R(x,y)=|(x+iy)^2-1|^2 =(x^2-y^2 -1)^2 +(2xy)^2$ and $\beta:=\frac{1}{2\pi} \int_{\R^2} R(x,y) e^\psi dxdy$, we obtain that :
\begin{eqnarray*}
    -\De \psi =R(x,y) e^\psi\text{ in } \R^2 \text{ and } \beta= \frac 1 {2\pi} \int_{\R^2} R(x,y) e^\psi dxdy =8.
\end{eqnarray*}
Therefore, we can apply Theorem 2 of \cite{Chen_Li_1} and obtain the following useful identity:

\begin{eqnarray*}  \int_{\RR^2} \left( x\frac{\p R}{\p x}+y\frac{\p R}{\p y}\right) e^\psi dxdy =\pi \beta (\beta-4). \end{eqnarray*}

By straightforward calculations we find:  \begin{eqnarray*}  x\frac{\p R}{\p x}+y\frac{\p R}{\p y}=4(R(x,y) +x^2-y^2 -1)=4(R(x,y)+\text{ Re}(z^2-1)), \end{eqnarray*}and consequently, \begin{eqnarray*} \pi\b(\b-4)&=& \int_{\RR^2} \left( x\frac{\p R}{\p x}+y\frac{\p R}{\p y}\right) e^\psi  =4\int_{\RR^2} R(x,y)e^\psi +4\int_{\RR^2} e^\psi \text{Re}(z^2-1)\frac{i}{2}dz \wedge d \bar{z} \\&=&8\pi\b+4\int_{\RR^2} e^\psi \text{Re}(z^2-1) \frac{i}{2}dz \wedge d \bar{z}. \end{eqnarray*}

Thus, we obtain: 
$$
4\int_{\RR^2} e^\psi \text{Re} (z^2-1)\frac{i}{2}dz \wedge d \bar{z}=\pi\b(\b-4)-8\pi\b=\pi \b(\b-12), $$  with $\b=8$. In other words, $$     \int_{\RR^2} e^{\psi(z)}  \text{Re}(z^2-1)dzd\bar z=-8\pi $$ as claimed.

\end{proof}

\

\
\subsection{Appendix 2: Invariance of Donaldson functional}\label{append2}
Let $X$ and $Y$ be two Riemann surfaces with genus $\gg\geq 2$, and suppose there exists:
$$
    f:X\to Y \text{ a biholomorphic map.}
$$
We denote by $g_X$ and $g_Y$ the unique hyperbolic metric (compatible with the complex structure) in $X$ and $Y$ respectively.
Hence

\begin{eqnarray*}
    f:(X,g_X)\to (Y,g_Y)
\end{eqnarray*}defines an orientation-preserving isometry.

In holomorphic $z$-coordinates on $X$ and $w$-coordinates on $Y$ as in (\ref{hypcoord}), we have:
\begin{eqnarray*}
    g_X= e^{2u_X} |dz|^2 \text{ and } g_Y=e^{2u_Y} |dw|^2,
\end{eqnarray*}with $u_X$ and $u_Y$ smooth functions on $X$ and $Y$ respectively. So that the isometry condition states that,
\begin{eqnarray}\label{(2)}
    e^{2u_X(z)}=e^{2u_Y(f(z))} |f'(z)|^2,
\end{eqnarray}and the following holds for the area elements:
\begin{eqnarray*}
&&    dA_X(z)= \frac i 2 e^{2u_X(z)} dz\wedge d\bar z =\frac i 2 e^{2u_Y(f(z))} |f'(z)|^2 dz \wedge d\bar z
\\&& dA_Y(w)= \frac i 2 e^{2u_Y(w)} dw\wedge d\bar w.
\end{eqnarray*} Also, we denoted by  $E_X=T_X^{1,0}$ and $E_Y= T_Y^{1,0}$ the 
holomorphic tangent bundles relative to $X$ and $Y$ respectively,  
with hermitian products   
$\<\cdot, \cdot\>_{E_X},$ 
$\<\cdot, \cdot\>_{E_Y}$ and norms $\|\cdot 
\|_{E_X}$  and $\|\cdot \|_{E_X}$ induced by $g_X,$ and $g_Y$ respectively.

Next, we consider the pullback $f^*$ of the map $f$ on the space of Beltrami differentials:
\begin{eqnarray*}
    f^*: A^{0,1}(Y,E_Y)\to A^{0,1}(X,E_X)
\end{eqnarray*}defined as follows: for $\b\in A^{0,1}(Y,E_Y)$ and for $x\in X$, we take $v\in (T^{0,1}_X)_x$, then
\begin{eqnarray*}
    (f^*(\b))_x [v]:= (df^{-1})_y ( \b_y(\overline{(df)_x[\bar v]})), \text{ with } y=f(x).
\end{eqnarray*}
\underline{\textbf{Claim 1:}}\begin{eqnarray}\label{(3)}
    \|f^*(\b)\|_{E_X}= \|\b\|_{E_Y}, ~~ \forall \b \in A^{0,1}(Y,E_Y).
\end{eqnarray}
To establish \eqref{(3)}, for any $x_0\in X$ and $y_0=f(x_0)\in Y$, we use holomorphic $z$-coordinates around $x_0$ and holomorphic $w$-coordinates around $y_0,$ as in (\ref{hypcoord}). 

Therefore we can write: $\b=\beta(w)d\bar w \frac \p {\p w}$ in a small ball around the origin. Hence, in the $z$-coordinates around the origin, we have:
\begin{eqnarray}\label{(4)}
    (f^*(\b))_z= (\beta(f(z)) \bar{f'}(z) (f^{-1})' (f(z))d\bar z \frac \p {\p z} =\beta(f(z)) \frac{ \bar{f'}(z)}{f'(z)} d\bar z \frac{\p }{\p z}.
\end{eqnarray} Consequently, by \eqref{norme} we have:
\begin{eqnarray*}
    \| \b_{y_0}\|_{E_Y}=|\beta(0)|\text{ and } \|f^*(\b)_{x_0}\|_{E_X}= |\beta(f(0))|=|\beta(0)|=\|\b_{y_0}\|_{E_Y}
\end{eqnarray*}
and \eqref{(3)} is established.
\vspace{0.2cm}

\underline{\textbf{Claim 2:}} If $\b\in  A^{0,1}(Y,E_Y)$ is harmonic with respect to $g_Y$ then $f^*(\b)\in A^{0,1}(X,E_X)$ is harmonic with respect to $g_X$. 

To establish Claim 2, we denote still by $f^*$ the following pullback by the map f :
\begin{eqnarray*}
    f^*: A^0(E_{Y}^*) \to A^0(E_{X}^*)
\end{eqnarray*}
defined locally by setting: $f^*(b(w)dw)=b(f(z))f'(z)dz.$

When $f$ is biholomorphic we see that the tensor product of the above map $f^*$, namely:
\begin{eqnarray*}
    f^*\otimes f^*: A^0(E_{Y}^*\otimes E_{Y}^*) \to A^0(E_{X}^*\otimes E_{X}^*),
\end{eqnarray*} maps the space $C_2(Y)$ into $C_2(X). $ At this point Claim 2 follows esaily by the commutation relation :
\begin{eqnarray}\label{(6)}
    (f^*\otimes f^*)(*_y)=*_x\circ f^*.
\end{eqnarray}
To check \eqref{(6)}, for a Beltrami differential $\beta$ on $Y$  we use the local holomorphic  coordinates above and set:  $\b=\beta(w)d \bar w \frac{\p}{\p w},$  then by formula \eqref{**bis} we have:

 \begin{eqnarray*}
    *_y \left(\beta(w) d\bar w \frac \p {\p w} \right) = - \frac i 2\overline{ \beta}(w) e^{2u_Y(w)}(dw)^2,
\end{eqnarray*}
clearly a similar local formula holds for the local expression $*_x$ applied to a Beltrami differential on $X.$
Therefore, by using \eqref{(2)} and \eqref{(4)}, locally we derive:
\begin{eqnarray*}
  &&  (f^*\otimes f^*)(*_y\b)=-\frac i 2 \bar{\beta}(f(z)) e^{2u_Y(f(z))} (f'(z))^2 (dz)^2
    \\&=&  -\frac i 2 \bar{\beta}(f(z)) \frac{(f'(z))^2}{|f'(z)|^2} e^{2u_Y(f(z))} |f'(z)|^2 (dz)^2
    \\&=& -\frac i 2\overline{\left( \beta (f(z)) \frac{\bar {f'}(z)} { f'(z)}  \right) } e^{2u_X(z) } (dz)^2 =*_x(f^* (\beta)).
\end{eqnarray*}
Next, still by $f^*$ we denote the pullback by the map f acting on sections, namely:
\begin{eqnarray*}
    f^*: A^0(E_{Y}) \to A^0(E_{X})
\end{eqnarray*}
defined as follows:
$$
\text{if } \eta \in A^0(E_Y)  \text{ then } (f^*(\eta))_x= (df^{-1})_y(\eta_y)\text{ for } x\in X \text{ and } y=f(x).
$$

\underline{\textbf{Claim 3:}}  $\forall \eta\in  A^0(E_Y)$, we have: \begin{eqnarray}\label{(7)}
\bar\p (f^*(\eta)) =f^*(\bar\p \eta)\end{eqnarray}

To establish \eqref{(7)} we use again the local expression: $\eta=a(w)\frac \p {\p w}$ and compute:
\begin{eqnarray*}
    (f^*(\eta))_z= (df^{-1}) (f(z)) a(f(z)) \frac \p {\p z} =\frac 1 {f'(z)} a(f(z)) \frac \p {\p z}.
\end{eqnarray*}Since $f$ is biholomorphic and in view of \eqref{(4)}, we find:
\begin{eqnarray*}
    \bar\p (f^*(\eta))_z&=& \left( \frac 1 {f'(z)} \frac{\p a}{\p \bar w} (f(z)) \frac{\p \bar f}{\p \bar z} \right) d\bar z \frac \p {\p z }\\&=&  \frac{\p a}{ \p \bar w} (f(z)) \frac{\bar{ f'}(z)}{ f'(z)} d\bar z \frac \p {\p z} =f^* (\bar \p \eta).
\end{eqnarray*}
\underline{\textbf{Claim 4:}}  $\forall u \in C^{\infty}(Y)$ and $\forall x \in X$, we have: 
\begin{equation}\label{grad} |\nabla(u \circ f) |_{g_X}(x)  = |\nabla(u)|_{g_Y}(f(x)).\end{equation}
To establish \eqref{grad} we use the definition of  gradient together with the fact that $f$ is an isometry with respect to the hyperbolic metrics, $g_X$ and $g_Y$ respectively. Therefore, $\forall x \in X $ and $ \forall v \in E_x,$  we have:
$$ \begin{array}{l} <df_x(\nabla(u \circ f) (x)),df_x(v)>_{g_Y} =  <\nabla(u \circ f) (x)),v>_{g_X} = \\ d(u \circ f)_x(v) = du_{f(x)}(df_x(v))= <\nabla(u)(f(x)),df_x(v)>_{g_Y}. \end{array} $$
Hence $df_x(\nabla(u \circ f) (x)) = \nabla(u)(f(x))$ and consequently: $ |\nabla(u)|_{g_Y}(f(x))= |df_x(\nabla(u \circ f)(x))|_{g_Y}=|\nabla(u \circ f) |_{g_X}(x) $
and the claim is established.\\ \\

At this point, we are ready to establish the invariance of the Donaldson functional defined in \eqref{F_t}.

For harmonic $\b_0\in A^{0,1}(Y,E_Y)\setminus \{0\}$, consider the Donaldson functional relative to the pair $(Y,[\b_0])$:
\begin{eqnarray*}
    F_t^{\b_0}[Y](u,\eta)=\int_Y \left( \frac{|\n u|^2_{g_Y}}{4}-u+te^u+4e^u\|\b_0+\bar{\partial} \eta \|^2\right) dA_Y,
\end{eqnarray*}with $(u,\eta)\in H^1(Y) \times W^{1,2}(Y,E_Y)$.

Thus, by Claim 2, it is well defined the Donaldson functional relative to the pair $(X,f^*(\b_0))$ as follows:
\begin{eqnarray*}
    F_t^{f^*(\b_0)} [X](v,\xi)=\int_X \left( \frac{|\n v|^2_{g_X}}{4}-v+te^v +4e^v \|f^*(\b_0)+\bar\p \xi\|^2_{E_X} \right) dA_X,
\end{eqnarray*}with $(v,\xi)\in H^1(X)\times W^{1,2}(X,E_X)$. The following holds:
\begin{proposition}\label{propA.1}
    Let $f:X\to Y$ biholomorphic then
    \begin{eqnarray*}
        F_t^{\b_0}[Y](u,\eta)=F_t^{f^*(\b_0)}[X](u\circ f,f^*(\eta)),
    \end{eqnarray*}$\forall (u,\eta) \in H^1(Y)\times W^{1,2}(Y,E_Y).$
\end{proposition}
\begin{proof}
    In view of \eqref{(2)}, \eqref{(3)}, Claim 2, claim 4  and \eqref{(7)}, we derive:
    \begin{eqnarray*}
      &&  \int_X \left( \frac{|\n (u\circ f)|^2_{g_X}} {4} -(u\circ f)+te^{u\circ f} +4 e^{u\circ f}\|f^*(\b_0)+\bar\p f^*(\eta)\|^2 \right) dA_X
        \\&=& \int_X \left( \frac{|\n u|^2_{g_Y}(f(x))} {4} -u(f(x)+te^{u(f(x))} +4 e^{u(f(x))}\|f^*(\b_0)+\bar\p f^*(\eta)\|^2_{E_X} \right) dA_X
        \\&=&\int_Y \left( \frac{|\n u(y)|^2_{g_Y}} {4} -u(y)+te^{u(y)} +4 e^{u(y)}\|\b_0+\bar \p\eta\|^2_{E_Y}\right) dA_Y=F_t^{\b_0} [Y](u,\eta),
    \end{eqnarray*}$\forall (u,\eta)\in H^1(X)\times W^{1,2}(Y,E_Y)$.
\end{proof}

The invariance property of Proposition \ref{propA.1} is crucial to establish Theorem \ref{thmg2}. 
It will be applied to the Riemann surface $X$ of genus $2,$ where the biholomorphism is given by the unique involution:
\begin{eqnarray*}
    j: X\to X
\end{eqnarray*} (cf. \cite{Miranda}, \cite{Griffiths_Harris}). Moreover in the proof  of Theorem \ref{thmg2} also we need the following: 
\begin{proposition}\label{propA.2}
    If $\b_0\in A^{0,1}(X,E)\setminus\{0\}$ is harmonic with respect to $g_X$ then
    \begin{eqnarray*}
        j^*(\b_0)=\b_0.
    \end{eqnarray*}
\end{proposition}
\begin{proof}
    Recall that $j$ admits exactly six fixed points, say: $\{q_1,\ldots, q_6\}$.  Furthermore, since $dj_q$ has order $2$ then $j'(q)=\pm 1$. 

    Therefore, by means of \eqref{(4)}, we see that 
    \begin{eqnarray*}
        (j^*(\b_0))_q= (\b_0)_q,\forall q\in\{q_1,\ldots, q_6\}.
    \end{eqnarray*}
    As a consequence, the harmonic Beltrami differential $j^*(\b_0)-\b_0$ vanishes at those  six points.

    Thus also the holomorphic quadratic differential:
    \begin{eqnarray*}
        \a^0=*_E(j^*(\b_0)-\b_0)\in H^0(X,E^*\otimes E^*)=C_2(X)
    \end{eqnarray*} vanishes at those six points as well. But we have shown in \eqref{2.01} that
    $E^*\otimes E^*=K_X\otimes K_X$ admits degree $4$ (for $\gg=2$) and so $\a^0$ must be identically zero. That implies that, $j^*(\b_0)=\b_0$ as claimed.
\end{proof}

\subsection{Appendix 3: Secant varieties.  }\label{append3}
Our goal here is to establish Proposition \ref{***}.\\
Since the statement of Proposition \ref{***} is well  known for $\gg=2$ (i.e. $\nu=1$), from now on we suppose that $X$ admits genus $\gg \geq 3.$ As a consequence, the corresponding Kodaira map $\tau$ is one to one,  moreover $\tau(X)$ is a non degenerate complex curve.\\
 \
For this purpose let, 
$$
N=3(\gg-1) \text{ and } V:=C_2(X).
$$

For fixed $\nu\in\{1,\ldots, 2\gg-3\},$ let $G_{N-\nu}(V)$ be the Grassmannian of $N-\nu$ dimensional complex subspaces of  $V$ and define the map: 
\begin{equation}\label{psi}
 \Psi : X^{(\nu)} \to G_{N-\nu}(V) \,\,\text{ with } \Psi(D) = Q(D).
 \end{equation}

It is well known that $\Psi$ is holomorphic, but for the reader convenience we will give a direct proof of this fact. 
 Recall that, to give a structure of complex manifold on $G_{N-\nu}(V)$, we choose a basis $\a_1,\ldots, \a_N$ of $V$ and identify $G_{N-\nu}(V)$ with $G_{N-\nu}(\CC^N)$. The structure on $G_{N-\nu}(\CC^N)$ is the quotient of $\O$ modulo the group $GL(\nu)$ acting by left multiplication, where $\O\subset \CC^{\nu\times N}$ is the open set of  $\nu\times N$ complex matrices of maximal rank. The holomorphic quotient map $q:\O\to G_{N-\nu}(\CC^N)$ is defined by setting: $A\to \text{Ker}(A)$. Given $A\in \CC^{\nu\times N}$ a  $\nu\times N$ complex matrix, we denote by $A^j\in \CC^\nu$ the $j$-th column of $A$, for $j=1,\ldots,N$. Fix a set $I$ of $\nu$ positive integers, namely: $I=\{1\leq i_1<i_2\cdots <i_\nu \leq N\},$ let $A_I$ the $\nu\times \nu$ matrix formed with the columns $A^{i_1},A^{i_2},\ldots,A^{i_\nu}$ and define:  $\O_I:=\{A\in\O: \det(A_I)\neq 0\}$. Then $q(A)=q((A_I)^{-1} A)$ for every $A\in \O_I$. Consider the subset $\Lambda_I\subset \O_I$ given as follows: $\Lambda_I=\{A\in \O_I : A_I= Id_\nu\}$ with $Id_{\nu}$ the identity metrix,  then the restriction of $q$ on $\Lambda_I$ gives a local holomorphic chart of $G_{N-\nu}(\CC^N)$.

Now let $p_0=(x_1^0, \ldots, x_\nu^0)\in X^{(\nu)},$ by \eqref{2.07tris} we find $\a^0\in H^0(X,\otimes^2(K_X))$ which does not vanish at $x_j^0$ for all $1\leq j\leq \nu.$

Hence, we can choose a small open neighborhood $\textbf{W}=W_1\times W_2\cdots \times W_\nu $ of $p_0,$ such that  $\a^0$ is nowhere zero on each $W_j$, for $1\leq j\leq \nu.$

We consider the map $\tilde\Psi: \textbf{W}\to \O$ so that, for given indices $1\leq j\leq \nu$ and $1\leq h\leq N$ and for $x=(x_1,\ldots, x_\nu)$, we set:
\begin{eqnarray*}
[\tilde\Psi(x)]_{j,h}= \left( \frac{\a_h}{\a^0}\right)^0 (x_j), \ldots, \left(\frac{\a_h}{\a^0}\right)^{\nu_j-1} (x_j),\\ \text{  \ if the point \ } x_j \text{ appears } \nu_j \text{ times in } x;
\end{eqnarray*}
where the upper index denotes the holomorphic derivative and $\{\a_1,\ldots, \a_N\}$ is the fixed basis in $V=C_2(X)$ . Observe that with the given local holomorphic charts above, we have $\Psi|_\textbf{W}=q\circ \tilde\Psi.$ In order to show that the map $\Psi$ is holomorphic, we need the following:
\begin{lemma}
    Let $U: D(0,\varepsilon)\to \CC^N$ be a holomorphic map where $D(0,\varepsilon)\subset \CC$ is the open disk with center $0$ and radius $\varepsilon$. Let $z_1,z_2,\ldots,z_k$ be distinct points in $D(0,\varepsilon)$, and define inductively the following vectors:
    $$ \begin{array}{l}
        F_1(z_1):=U(z_1), \\  F_{j+1}(z_1,z_2,\ldots,z_{j-1},z_j,z_{j+1}):= \frac{F_j(z_1,\ldots, z_{j-1},z_{j+1})- F_j(z_1,\ldots, z_{j-1},z_j)}{z_{j+1}-z_j}.
    \end{array} $$ 
    We have:
    \begin{enumerate}
        \item[1)] $\text{span}\{F_1(z_1),F_2(z_1,z_2),\ldots, F_k(z_1,z_2,\ldots, z_k)\} =\text{span}\{U(z_1),U(z_2),\ldots, U(z_k)\}$,

        \item[2)] \begin{eqnarray*}
            \lim_{(z_1,\ldots, z_j)\to (0,\ldots,0)} F_j(z_1,\ldots,z_j)= \frac{U^{j-1}(0)}{(j-1)!},
        \end{eqnarray*}for all $1\leq j\leq k$.

    \end{enumerate}
\end{lemma}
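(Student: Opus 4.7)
The plan is to recognize that the recursion defining $F_j$ is precisely the classical one for the divided differences of the vector-valued holomorphic map $U$. I will establish two auxiliary identities---a closed Lagrange-type formula and a Hermite--Genocchi integral representation---each proved by a routine induction on $j$; both claims of the lemma follow at once from these. Working componentwise on the $N$ scalar holomorphic components of $U:D(0,\varepsilon)\to\mathbb{C}^N$ reduces every computation to the one-variable setting.

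For part 1), I would first prove by induction on $j$ the closed formula
\begin{equation*}
F_j(z_1,\ldots,z_j)\;=\;\sum_{i=1}^{j}\frac{U(z_i)}{\prod_{\ell\neq i,\;1\leq \ell\leq j}(z_i-z_\ell)}.
\end{equation*}
The base case $j=1$ is trivial; for the inductive step one substitutes the formulas for $F_j(z_1,\ldots,z_{j-1},z_{j+1})$ and $F_j(z_1,\ldots,z_{j-1},z_j)$ into the recursion and invokes the partial-fraction identity
$\frac{1}{(z_i-z_j)(z_i-z_{j+1})}=\frac{1}{z_{j+1}-z_j}\bigl(\frac{1}{z_i-z_{j+1}}-\frac{1}{z_i-z_j}\bigr)$
to recover the corresponding expression at level $j+1$. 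Once this closed form is available, the coefficient of $U(z_j)$ in $F_j(z_1,\ldots,z_j)$ equals $\prod_{\ell<j}(z_j-z_\ell)^{-1}\neq 0$, and $F_j$ involves only $U(z_1),\ldots,U(z_j)$. Therefore the change-of-basis matrix expressing $(F_1,\ldots,F_k)$ in terms of $(U(z_1),\ldots,U(z_k))$ is lower triangular with non-vanishing diagonal, so it is invertible and the two spans coincide.

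For part 2), I would establish the Hermite--Genocchi representation
\begin{equation*}
F_j(z_1,\ldots,z_j)\;=\;\int_{\Delta_{j-1}}U^{(j-1)}\!\Bigl(\sum_{i=1}^{j} t_i\,z_i\Bigr)\,dt_1\cdots dt_{j-1},
\end{equation*}
where $\Delta_{j-1}=\{(t_1,\ldots,t_j)\in\mathbb{R}^{j}_{\geq 0}:\sum_i t_i=1\}$ (with $t_j=1-t_1-\cdots-t_{j-1}$) is the standard $(j-1)$-simplex. The proof is by induction on $j$, the inductive step being a single application of the fundamental theorem of calculus along the affine segment from $z_j$ to $z_{j+1}$, which rewrites the difference quotient in the recursion for $F_{j+1}$ as an integral of $U^{(j)}$. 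Taking the limit $(z_1,\ldots,z_j)\to(0,\ldots,0)$, the integrand converges uniformly to the constant $U^{(j-1)}(0)$ by continuity of $U^{(j-1)}$ on a neighborhood of $0$, and using $\mathrm{vol}(\Delta_{j-1})=\frac{1}{(j-1)!}$ yields
\begin{equation*}
\lim_{(z_1,\ldots,z_j)\to(0,\ldots,0)}F_j(z_1,\ldots,z_j)\;=\;\frac{U^{(j-1)}(0)}{(j-1)!}.
\end{equation*}
The only (mild) obstacle is the index bookkeeping in the inductive step for the Hermite--Genocchi identity; no input beyond the classical scalar theory of divided differences is required, since both the closed form and the integral representation extend trivially componentwise from each scalar component of $U$ to the vector $U$ itself.
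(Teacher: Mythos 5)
Your proposal is correct, and it is in essence the paper's own argument dressed in the classical language of divided differences. The comparison is worth a word. For part 1) the paper does not derive a closed formula: it shows by induction only that $F_j=c_1U(z_1)+\cdots+c_jU(z_j)$ with $c_j\neq0$ (tracking just the last coefficient through the recursion), and then runs a second induction to get the reverse inclusion $U(z_{j+1})\in\mathrm{span}\{F_1,\ldots,F_{j+1}\}$. Your Lagrange-type closed form $F_j=\sum_i U(z_i)\prod_{\ell\neq i}(z_i-z_\ell)^{-1}$ is a stronger statement that collapses both inclusions into the single observation that the change-of-basis matrix is triangular with nonzero diagonal; this is cleaner, at the cost of the partial-fraction bookkeeping in the inductive step (which you state correctly). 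For part 2) the two proofs are the same integral representation in different coordinates: the paper parametrizes by the unit cube $[0,1]^{j-1}$ and carries an explicit weight $Q_j(\mathbf{t})=t_1^{j-2}t_2^{j-3}\cdots t_{j-2}$ with $\int Q_j=\frac{1}{(j-1)!}$, built up one application of the fundamental theorem of calculus at a time, whereas you quote the Hermite--Genocchi form over the standard simplex, whose volume supplies the factor $\frac{1}{(j-1)!}$ directly. Both conclude by passing to the limit under the integral sign using boundedness of $U^{(j-1)}$ near $0$. Nothing is missing from your outline; the only steps you defer (the two inductions) are routine and are exactly the ones the paper writes out.
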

\begin{proof}
    First, we show the statement 1). For each $j\geq 1$, we claim that we can find  suitable functions $c_h, 1 \leq h \leq j$ such that 
     \begin{eqnarray*}
F_j(z_1,\ldots,z_j)= c_1(z_1,\ldots,z_j)U(z_1)+\;\ldots \; +c_j(z_1,\ldots,z_j)U(z_j),
    \end{eqnarray*}  with $c_j(z_1,\ldots,z_j) \neq 0$. Indeed, while this is clear for $j=1$, if by induction we assume it is true for $j$, then by using the definition, for $j+1$ we find that, 
    \begin{eqnarray*}
        c_{j+1}(z_1,\ldots, z_{j+1})=\frac{c_j(z_1,\ldots, z_{j-1},z_{j+1})}{ z_{j+1}-z_j} \neq 0.
    \end{eqnarray*}
    Hence,  each $F_j(z_1,\ldots, z_j)$ is in the $
\text{span}(U(z_1), U(z_2), \ldots, U(z_j)\}$. To show the vice-versa, again by induction, it suffices to show that, for $1\leq j\leq k$ then $U(z_{j+1})$ is in the $\text{span}\{F_1(z_1), F_2(z_1,z_2), \ldots, F_{j+1}(z_1,z_2,\ldots, z_{j+1})\}.$

But, using again the definition, we have:
\begin{eqnarray*}
    && F_{j+1}(z_1,\ldots, z_{j-1},z_j, z_{j+1}):= \frac{F_j(z_1,\ldots, z_{j-1},z_{j+1})- F_j(z_1,\ldots, z_{j-1},z_j)}{z_{j+1}-z_j}
    \\&=& \frac 1 {z_{j+1}-z_j} \big(c_1(z_1,\ldots, z_{j-1},z_{j+1})U(z_1)+c_2(z_1,\ldots, z_{j-1},z_{j+1}) U(z_2)  
    \\&& +\cdots c_{j-1}(z_1,\ldots, z_{j-1},z_{j+1} )U(z_{j-1}) \big) + \left( \frac{c_j(z_1,\ldots, z_{j-1},z_{j+1})}{z_{j+1}-z_j}\right) U(z_{j+1}) \end{eqnarray*}  
    $$ - \frac{F_j(z_1,\ldots, z_{j-1},z_j)}{z_{j+1}-z_j}, $$
and by the previous argument, the term:
\begin{eqnarray*}
    \frac{F_j(z_1,\ldots, z_{j-1},z_j)}{z_{j+1}-z_j} \in \text{span}\{U(z_1), U(z_2),\ldots, U(z_j)\},
\end{eqnarray*} with 
$$ \left( \frac{c_j(z_1,\ldots, z_{j-1},z_{j+1})}{z_{j+1}-z_j}\right)\neq 0.$$ Since,
by the induction assumption we know that,\\
$\text{span}\{U(z_1), U(z_2),\ldots, U(z_j)\}\subset \text{span}\{F_1(z_1), F_2(z_1,z_2),\ldots, F_{j+1}(z_1,\ldots,z_{j+1})\}$, by means of the identity above, we conclude that,\\
$U(z_{j+1}) \in \text{span}\{F_1(z_1), F_2(z_1,z_2),\ldots, F_{j+1}(z_1,\ldots,z_{j+1})\}$ as claimed, and the proof of statement 1) is completed.

To prove 2), we will show that, for given $2\leq j\leq k$ and $\t:=(t_1,t_2,\ldots, t_{j-1})$ there exist polynomials $\lambda_s^j(\t)$ with $1\leq s\leq k$ and a polynomial $Q_j(\t)$ such that  $\l_s^j(\t)$
 and $Q(\t)$ are strictly positive on the open cube $(0,1)^{j-1}$, they do not dependent on $U$ and:
\begin{eqnarray}\label{2.*}
    F_j(z_1,\ldots,z_j)=\int_{[0,1]^{j-1}} U^{j-1}\left( \sum_{s=1}^j \l_s^j(\t) z_s\right) Q_j(\t) d\t,
\end{eqnarray}with $d\t:=dt_1\cdots dt_{j-1}$. Since each derivative of $U$ is bounded on compact sets of $D(0,\varepsilon)$ then statement 2) follows from the above formula and from the dominated convergence theorem, as we shall see also that: $\frac 1 {(j-1)!}=: \int_{[0,1]^{j-1}} Q(\t) d\t $. 

To show \eqref{2.*}, we proceed by induction as we have: $$F_2(z_1,z_2)=\frac {U(z_2)-U(z_1)}{ z_2-z_1}=\frac{\int_0^1 U(\g(t_1))\g'(t_1)dt_1}{z_2-z_1}$$ with $\g(t)= tz_2+(1-t)z_1$. However $\g'(t)=z_2-z_1$, and therefore, if we let $\l_1^2(t_1)= 1-t_1$, $\l_2^2(t_1)= t_1$, and $Q_2(t_1)\equiv 1,$ we get the formula above for $j=2$. Next, assume that the formula is valid for $j$. We let $Z'=(z_1,\ldots,z_{j-1})$, $\l^\j(\t):=(\l_1^j(\t),\ldots, \l_{j-1}^j (\t))$, and $\<\l^j,Z'\>= \sum_{s=1}^{j-1} \l_s^j(\t) z_s$, with $\t=(t_1,\ldots, t_{j-1})$. We find: 
{\footnotesize\begin{eqnarray*}
 &&   F_{j+1}(Z', z_j,z_{j+1})=
    \\&=& \frac{\left( \int_{[0,1]^{j-1}} \left( U^{j-1}\left (\<\l^j,Z'\>+\l_j^j(\t) z_{j+1}\right)- U^{j-1} \left( \<\l^j, Z'\>+\l^j_j (\t) z_j\right) \right)  Q_j(\t) d\t\right)}{z_{j+1}-z_j}  
    \\&=&  \int_{[0,1]^{j-1}} \frac{U^{j-1}\left (\<\l^j,Z'\>+\l_j^j(\t) z_{j+1}\right)- U^{j-1} \left( \<\l^j, Z'\>+\l^j_j (\t) z_j\right) }{\l_j^j(\t)(z_{j+1}-z_j) } (\l_j^j Q_j)(\t) d\t
    \\&=& \int_{[0,1]^{j-1}}\int_{[0,1]} U^j(\g(\tau)) (\l_j^j Q_j) (\t) d\t d\tau,
\end{eqnarray*}}with 
\begin{eqnarray*}
    \g(\tau)&=& \tau(\<\l^j, Z'\> +\l_j^j(\t) z_{j+1})+(1-\tau) (\<\l^j, Z'\>+\l_j^j(\t) z_j) 
    \\&=& \<\l^j,Z'\>+ \tau \l_j^j(\t) z_{j+1}+(1-\tau) \l_j^j(\t) z_j.
\end{eqnarray*}
Consequently, we obtain the formula above also for $2\leq j\leq k$,  by setting:

\begin{eqnarray*} 
\left\{
\begin{array}{llll}
 \l_s^{j+1}(t_1,\ldots, t_{j-1},t_j) =\l_s^j(t_1,\ldots, t_{j-1}),\text{for } 1\leq s\leq j-1,  \\ \l_j^{j+1}(t_1,\ldots, t_j)=(1-t_j) \l_j^j (t_1,\ldots, t_{j-1}),\\
\l_{j+1}^{j+1}(t_1,\ldots, t_{j-1},t_j) =t_j \l_j^j(t_1,\ldots, t_{j-1}).\\
Q_{j+1}(t_1,\ldots,t_j)= \l_j^j (t_1,\ldots,t_{j-1}) Q_j(t_1,\ldots, t_{j-1}),\\
Q_2(t_1)\equiv 1\\
\l_1^2(t_1)=1-t_1, \l_2^2(t_1)=t_1,
\end{array} \right.
\end{eqnarray*}
So by using induction, we find that,

\begin{eqnarray*}
    \l_j^j(t_1,\ldots, t_{j-1})=t_1t_2\cdots t_{j-1},
\end{eqnarray*}and
\begin{eqnarray*}
    Q_j(t_1,t_2,\ldots,t_{j-1})= t_1^{j-2}t_2^{j-3} t_3^{j-4} \cdots t_{j-2}.
\end{eqnarray*}
In particular, we obtain:  $\int_{[0,1]^{j-1}} Q_j(t_1,\ldots, t_{j-1}) dt_1\cdots dt_{j-1}= \frac  1 {(j-1)!}$.

\end{proof}
Now the map $\Psi$ is clearly holomorphic on the open subset of $X^{(\nu)}$ where all points are distinct. Moreover, the kernel of the matrix $A$ defined above is the complex conjugate of the orthogonal to the span of the rows of $A$ with respect to the standard hermitian product in $\CC^N$. The lemma above shows that the map $\Psi$ is continuous in $X^{(\nu)}$, therefore it is holomorphic by Riemann extension theorem. Hence by the proper mapping Theorem (see \cite{Demailly} Chapter II Section 8.8)  the image $S$ of $\Psi$ (i.e. $S=\Psi(X^{(\nu)})$ ) is a closed subvariety of $G_{N-\nu}(V)$ of dimension $s\leq \nu$. Since $X$ is connected then $S$ is irreducible.

Let $V^*$ be the dual space of $V=C_2(X)$ and actually, as observed above, $V^*$ is the space of harmonic Beltrami differentials. In other words, $V^*\simeq \mathcal{H}^{0,1}(X,E)$, and thus the projective space $P(V^*)$  (associated to $V^*$)  satisfies : $P(V^*) \simeq P(\mathcal{H}^{0,1}(X,E))$. We let $\S_\nu\subset P(V^*)\times S$ be given by:
\begin{eqnarray*}
    \Sigma_\nu :=\{(\pi(\beta), W) \in P(V^*)\times S: \beta|_W\equiv 0\},
\end{eqnarray*} where $\pi: V^*\setminus\{0\}\to P(V^*)$ is the natural projection. The set $\S_\nu$ is closed in $P(V^*)\times S$, we claim that $\S_\nu$ is a subvariaty of $P(V^*)\times S$ as well. 
Let us identify $V^*$ with $\CC^N$ by means of the dual basis of $\{\a_1,\cdots, \a_N\}$ fixed above. With this identification, if $W=\text{Ker}(A)$ and $A\in  \Omega,$ then $\beta_0 \in V^*$ is zero on the vector subspace $W$  if and only if $\beta_0$ is a linear combination of the rows of $A$. This is true if and only if the $(\nu+1)\times N$ matrix $A'$ with rows $A_1,\ldots, A_\nu,\b_0$ has rank $\nu$. In turn, this is true if and only if all $(\nu+1)\times (\nu+1)$ minors of $A'$ vanish. Those minors then give local holomorphic defining functions for the subvariety $\S_\nu$. We compute the dimension of $\S_\nu$. Let $p_1: P(V^*)\times S\to P(V^*)$ and $p_2: P(V^*) \times S \to S$ be the projection on the first and second factor respectively. 
The map $p_2$ restricted to $\S_\nu$ is onto, and the fiber over the space $W\in G_{N-\nu} (V)$ is the projectivisation of the vector space of linear functionals on $V$ vanishing on $W$. Such a vector space has dimension $N-(N-\nu)=\nu$. So the fiber of $W$ has dimension $\nu-1$ and the dimension of $\S_\nu$ is $s+\nu-1\leq 2\nu-1$. Since the fibers of $W$ are projective spaces and $P(V^*)$ is smooth and connected the sets $\S_\nu$ is irreducible, hence  $p_1(\S_\nu)$ is also closed analytic and irreducible. 

It follows that the dimension of $p_1(\S_\nu)$ is not greater than $2\nu-1$. Observe that given the divisors: $D_1\in X^{(\nu_1)}$ and $D_2\in X^{(\nu_2)}$ such that $D_1\leq D_2$, then $Q(D_2) \subset Q(D_1)$, so $p_1(\S_1)\subset p_1(\S_\nu) \subset p_1(\S_{\nu+1}) \subset p_1(\S_{\gg-1})$. 

Moreover, since there is a unique non-zero linear form (up to scalars) which vanishes on an hyperplane, then $p_1(\S_1)$ is the one-dimensional image of the Kodaira map in \eqref{kodaira map}.

\begin{remark}\label{dimensione} 
In case $1\leq \nu\leq (\gg-1)$, we have: $\text{dim}(p_1(\S_\nu)) \leq 2(\gg-1)-1,$ and since $\gg>1$, then $2(\gg-1)-1<3(\gg-1)-1=N-1=\text{dim}(P(V^*)),$ and so $p_1(\S_\nu)\neq P(V^*)$. 
Actually, in this case, it can be shown that the subvaraity $p_1(\S_\nu)$ has dimension exactly
$2\nu-1.$ \end{remark}

By means of the identification explicitly given in \eqref{dual}, we have the isomorphism:
\begin{eqnarray*}
    i: & V^*&\to \mathcal{H}^{0,1}(X,E)
    \\&\b_0& \to [\b_0]
\end{eqnarray*}
($\b_0$ harmonic Beltrami differential) 
which can be lifted into the isomorphism:
\begin{eqnarray*}
    \hat i: \PP(V^*)\to \PP(\H^{0,1}(X,E)), \,\,\,  V=C_2(X).
\end{eqnarray*} 
Thus, by setting:
\begin{eqnarray} \label{secante}
  \tilde{\Sigma}_{\nu}:= \hat i \circ p_1(\S_\nu), \quad \nu=1,\cdots, \gg-1.
\end{eqnarray} 
In account of the above properties, we know that $ \tilde{\Sigma}_{\nu}$  defines a closed sub-variety, and by construction, it satisfies the properties \eqref{1} and \eqref{2}. Therefore, we are left to prove that $ \tilde{\Sigma}_{\nu}$ coincides with the $\nu$-secant variety of $\tau(X)$, a property pointed out to us from a previous version of the paper.\\
 
 To this purpose, let  $Z = \tau(X) \subseteq \PP(V^*),$   and  consider the canonical projection:
 $$\pi: V^* \setminus \{0\} \to \PP(V^*)\,\,\mbox {so that} \,\, \pi ( \beta)=[\beta]_{\PP},\,\, \forall \beta \in V^* \setminus \{0\} .$$  
 
 For given $\nu=1,\cdots, \gg-1,$  and distinct points $p_j \in X, \,\, j=1,\cdots, \nu, $ we consider the divisor $D := \sum_j p_j \in X^{(\nu)}$ and let $\beta_j \in V{^*} \setminus \{0\},$   satisfying:  $\pi(\beta_j)=[\beta_{j}]_{\PP} := \tau(p_j )\in Z,$ for $1 \leq j \leq \nu.$ By means of  \eqref{2.07bis} it is easy to check that   $\{\beta_1, \ldots, \beta_{\nu}\} \in V^*$ are linearly independent. Hence  $W_{D} := \pi( span(\beta_1, \ldots \beta_\nu) \setminus \{0\})$ is a projective subspace of  $\PP(V^*)$  (depending only of $D$)  with dim($W_{D}$)=$\nu-1.$ \\
 \medskip 

By definition, the $\nu$-secant variety $Y_{\nu}(Z)$ of $Z$ is  the closure in the Zariski topology of the union of all such projective subspaces $W_D,$
for any divisor $D\in X^{(\nu)}$ as specified above (namely, where all points in its support have multiplicity one),  see \cite{ACGH} Chapter VI section 1 for details, and  
clearly: $Y_1(Z)=Z.$ 

\begin{lemma}\label{secant}

If $1 \leq \nu \leq g-1$ the sub-variety $\tilde{\Sigma}_\nu$ in \eqref{secante} coincides with the $\nu$-secant variety $Y_{\nu}(Z)$ of  $Z=\tau(X).$ 
\end{lemma}

\begin{proof}
Let $D\in X^({\nu})$ be an effective divisor in $X$ of degree $\nu$ with $1 \leq \nu \leq \gg-1.$ We set: $$Q(D)^{\perp} = \{ \beta \in V^* : \,\, \int_X \beta \wedge \alpha = 0 \,\, \ \forall \alpha \in Q(D) \},$$ so that  $\tilde{\Sigma}_{\nu}$ is a closed subvariety (in Zariski topology) formed by union of all the projective subspaces $\pi (Q(D)^{\perp}),$ for any $D\in X^{(\nu)},$  and it is \\ 
On the other hand, if $D = \sum_{j=1, ..., \nu} p_j \in X^({\nu}) $ then 
$$Q(D) \subseteq Q(p_j) \ \mbox{so} \   Q(p_j)^{\perp} \subseteq Q(D)^{\perp} \,\, \forall \, j \in \{1, \ldots, \nu\},$$

and therefore, $ \tau(p_j) \in \pi(Q(D)^{\perp}), \,\, \forall \,j \in \{1, \ldots, \nu\}.$ 
 
 As a consequence, $\pi(span(\beta_1,\ldots, \beta_\nu)) \subseteq \pi(Q(D)^{\perp}),$ and since  dim($ \pi(span(\beta_1,\ldots, \beta_\nu)$)= $\nu -1$ = dim($\pi(Q(D)^{\perp}$) (recall \eqref {2.07*}) we conclude that, $$\pi(span(\beta_1,\ldots, \beta_\nu)) = \pi(Q(D)^{\perp}).$$   Thus, we obtain that,  $Y_{\nu} (\tau(X)) \subseteq \tilde{\Sigma}_\nu.$ 

It is easy to show that, if $\gg=3$  then equality holds, since as well known, $dim Y_{\nu} (\tau(X))=3$  whereas $dim \tilde{\Sigma}_\nu \leq 3,$ and consequently, $Y_{\nu} (\tau(X)) = \tilde{\Sigma}_\nu,$ as claimed.\\ 
To attain the same conclusion for genus $\gg \geq 4,$ we recall that any  divisor $D \in X^{(\nu)}$  is the limit (in the topology of $X^{(\nu)}$) of a sequence of divisors  $D_k  = \sum_{j=1, ..., \nu} p_j \in X^({(\nu})$ (i.e. $p_{j,k}$ all distinct). Consequently, from \eqref{psi} we find that, $Q(D_{k}) \to Q(D), \,\, \mbox{as}\, k \to \infty$ (with respect to the topology of the Grassmannian $G_{N-\nu}(V)$). This implies that (dually) we have: $Q(D_{k})^{\perp} \to Q(D)^{\perp}, \,\, \mbox{as}\, k \to \infty$ (with respect to the topology of  $G_{\nu}(V^{*})$). At this point, by using a (finite) orthonormal basis for $Q(D_{k})^{\perp}$  and arguing exactly as in the approximation  Lemma \eqref {approssimazione}  we see that, for any  $\beta \in Q(D)^{\perp}$ there exists $\beta_k \in Q(D_k)^{\perp}$ such that $\beta_k \to \beta, \,\, \mbox{as}\, k \to \infty.$ Hence, $\pi(\beta)\in Y_\nu(\tau(X))$ that is, $ \pi(Q(D)^{\perp})\subset Y_{\nu}(\tau(X)),$ a therefore: $\tilde{\Sigma}_\nu = Y_{\nu}(\tau(X)),$ as claimed. \end{proof}

\vskip.1cm
{\bf{Acknowledgement:}} 
\vskip.1cm
The authors have been partially supported by MIUR Excellence Department Projects awarded to the Department of Mathematics, University of Rome Tor Vergata, 2018-2022 CUP E83C18000100006, and 2023-2027 CUP E83C23000330006; moreover G.T. is partially supported  by PRIN \emph{Variational and Analytical aspects of Geometric PDEs} n.2022AKNSE4; and S.T. is partially supported by PRIN \emph{Real and Complex Ma\-ni\-folds: Topology, Geometry and holomorphic dynamics} n.2017JZ2SW5. 
 \vskip.1cm
   \noindent{\sc Universit\`a di Roma TorVergata, \\
  Via della Ricerca Scientifica 1, 00133, Roma, Italy, \\

\tt{tarantel@mat.uniroma2.it}} 

\tt{trapani@mat.uniroma2.it}

\bibliography{bib.bib}

\end{document}